\documentclass[a4paper,12pt]{amsart}

\usepackage{amssymb,xspace,amscd,yhmath,mathdots,txfonts,
mathrsfs,textcomp, color}

\usepackage[matrix,arrow,curve]{xy}\CompileMatrices

\usepackage{hyperref}

\usepackage{yfonts}[1998/10/03]

\DeclareMathAlphabet{\mathpzc}{OT1}{pzc}{m}{it}

\numberwithin{equation}{section}

\theoremstyle{plain}

\newtheorem{thm}{Theorem}[section]

\newtheorem{lem}[thm]{Lemma}

\newtheorem{prop}[thm]{Proposition}

\newtheorem{esemp}[thm]{Example}

\theoremstyle{definition}

\newtheorem{defn}{Definition}[section]
\newtheorem{exam}[thm]{Example}

\newtheorem{rmk}[thm]{Remark}

\DeclareMathAlphabet{\mathpzc}{OT1}{pzc}{m}{it}

\DeclareMathOperator{\R}{{\mathbb{R}}}

\DeclareMathOperator{\Gf}{\mathbf{G}}

\DeclareMathOperator{\ad}{\mathrm{ad}}
\DeclareMathOperator{\Ad}{\mathrm{Ad}}

\DeclareMathOperator{\ct}{\mathrm{c}}

\DeclareMathOperator{\Z}{\mathbb{Z}}

\DeclareMathOperator{\at}{\mathfrak{a}}

\DeclareMathOperator{\im}{{\mathrm{Im}}}
\DeclareMathOperator{\re}{{\mathrm{Re}}}

\DeclareMathOperator{\C}{\mathbb{C}}

\newcommand\qt{\mathfrak{q}}
\newcommand\gt{\mathfrak{g}}

\newcommand\hg{\mathfrak{h}}

\newcommand\iq{\mathpzc{i}}

\newcommand\spt{\mathfrak{sp}}

\newcommand\Symm{\mathpzc{Symm}}

\newcommand\Jd{\mathrm{J}}

\newcommand\lgt{\mathpzc{L}}

\newcommand\mt{\textswab{m}}

\newcommand\SU{\mathbf{SU}}

\newcommand\Fi{\mathpzc{F}}

\newcommand\Mf{\mathpzc{M}}

\newcommand\gl{\mathfrak{gl}}

\newcommand\slt{\mathfrak{sl}}
\newcommand\su{\mathfrak{su}}

\newcommand\Pp{\mathfrak{P}}

\newcommand\Az{\mathpzc{A}}
\newcommand{\Bz}{\mathpzc{B}}

\newcommand\Go{\mathpzc{G}}
\newcommand{\lt}{\mathfrak{l}}
\newcommand\G{\mathfrak{G}}

\newcommand\isot{{\mathfrak{h}}}

\newcommand\vq{\mathpzc{v}}
\newcommand\wq{\mathpzc{w}}

\newcommand\Hf{\mathbf{H}}
\newcommand\Lcr{\mathscr{L}}

\newcommand\Der{\mathpzc{Der}}

\newcommand{\pnt}{{\mathrm{p}}}

\newcommand\hq{\mathpzc{h}}
\newcommand\xq{\mathfrak{x}}
\newcommand\yq{\mathfrak{y}}

   \def\DHLhksqrt#1#2{\setbox0=\hbox{$#1\sqrt{#2\,}$}\dimen0=\ht0
     \advance\dimen0-0.2\ht0
     \setbox2=\hbox{\vrule height\ht0 depth -\dimen0}
     {\box0\lower0.4pt\box2}}

\title[$CR$ Algebras]{On  transitive contact and $CR$ algebras}
%
\author[S.~Marini]{Stefano Marini}
\address{S.~Marini: 
Dipartimento di Matematica e Fisica, 
III Universit\`a di Roma, 
Largo San Leonardo Murialdo, 1
00146, Roma (Italy) }
\email{marinistefano86@gmail.com}
\author[C.~Medori]{Costantino Medori}
\address{C.\ Medori:
Dipartimento di Scienze Matematiche, Fisiche e Informatiche\\ 
Universit\`a di Parma\\ Parco Area delle Scienze 7/a (Campus), 43124 Parma
 (Italy)} \email{costantino.medori@unipr.it}

\author[M.~Nacinovich]{Mauro Nacinovich}
\address{M.Nacinovich:
Dipartimento di Matematica\\ II Universit\`a di Roma
``Tor Ver\-ga\-ta''\\ Via della Ricerca Scientifica\\ 00133 Roma
(Italy)}
\email{nacinovi@mat.uniroma2.it}

\author[A.~Spiro]{Andrea Spiro}
\address{A.~Spiro:
Scuola di Scienze e Tecnologie,
Universit\`a di Camerino,
Via Madonna delle Carceri,
62032 Camerino (Macerata),
ITALY
}
\email{andrea.spiro@unicam.it}

\date\today
\thanks{The authors were  partially supported by the Project MIUR ``Real
and Complex Manifolds: Geometry, Topology and  Harmonic Analysis'' and
by GNSAGA of INdAM.}

\subjclass[2000]{Primary: 32V35
Secondary: 32V05, 32M10, 17B65, 53D10, 
 53C30}
\keywords{Homogeneous space, contact structure,
contact triple,  $CR$ algebra, transitive geometry}

\setcounter{tocdepth}{1}
\begin{document}

\begin{abstract} We consider locally homogeneous $CR$ manifolds and
show that, under a condition only depending on their underlying
contact structure, 
 their $CR$ automorphisms form a finite dimensional Lie group. 
\end{abstract}

\maketitle
\tableofcontents
\centerline{\textsc{Introduction}}\par \smallskip
In the past years some of the Authors introduced and 
investigated the notion of $CR$ algebra (see \cite{AMN10b,MN05}) to describe the local 
structure of homogeneous $CR$ manifolds. The understanding of local models 
is important e.g. for applying the method of E. Cartan to describe the
differential invariants of $CR$ structures. 
A key point is to find under which conditions
the infinitesimal automorphisms of the structure form a finite dimensional Lie algebra.
A $CR$ structure can be defined by the datum of a smooth involutive \textit{complex} distribution.
The real parts of its vectors define a \textit{real} distribution. A strong version of the condition that
the $CR$ manifold is not foliated by $CR$ submanifolds of lower dimension is that this real distribution is
a (generalised) \textit{contact} distribution, i.e. that its iterated commutators span the full tangent space.
The strong interplay between $CR$  and underlying contact structures
 was clearly exploited  in the work of N.~Tanaka 
(see \cite{Tan67,Tan70}). He considered, at each point,
the 
nilpotent $\Z$-graded real Lie algebra $\mt$ 
canonically associated to
a contact structure. 
To describe the infinitesimal automorphisms, one needs to consider 
\textit{extensions}, or prolongations  $\G$ of $\mt.$ 
In this setting, they
can be described recursively in terms of derivations of $\mt.$
When a $CR$ structure is imposed on the contact distribuiton,
finite dimensionality of the maximal
prolongation is, for this $\Z$-graded model, equivalent to the fact that the vector valued Levi form has trivial kernel. 
Thus Cartan's method applies, via Tanaka's theory, to the case where the 
associated $\Z$-graded 
Levi-Tanaka algebras (cf. \cite{MN97})
are isomorphic at all points and the Levi form is nondegenerate. \par 
The idea of introducing $CR$ algebras in \cite{MN05} originated from the observation that many interesting
homogeneous examples of $CR$ manifolds lead to infinite dimensional Levi-Tanaka algebras, their Levi
forms having  nontrivial kernels.
 An obvious generalization of the nondegeneracy condition is to require
that the iterated Levi forms have a trivial kernel. This condition,
that was called \textit{weak nondegeneracy} 
in \cite{MN05}
and is equivalent to the notion of \textit{(Levi) $k$-nondegeneracy} used by other authors (see e.g. \cite{BER1999}), 
 is indeed equivalent to the fact that the corresponding
$CR$ manifold  is not the total space of a $CR$ fibration with complex fibres.
 The differential invariants for $CR$ manifolds of hypersurface type  in real dimension $5$
 satisfying the notion of $k$-nondegenericity
 have been so far  studied by several authors with different techniques
(see e.g. \cite{KaFe2008,MeSp2014, IsZa2013, Poc2013, MeSp2015}).
Further developments in higher dimensions appeared in 
\cite{Sa2015,Po2015}.
A theory of invariants for 2-nondegenerate $CR$ hypersurfaces in arbitrary dimension,
modeled on Tanaka's approach, has been recently 
developed by Porter and Zelenko 
in~\cite{PoZe2017}.
\par 
In this paper we address the question 
on finite dimensionality of the full group of the automorphisms 
of 
$CR$ manifolds 
of arbitrary $CR$ dimension
and $CR$ codimension,  
whose 
$CR$ structure is locally homogeneous. 
Weak
nondegeneracy is 
a much more restrictive condition 
that the one we found to guarantee 
the finite dimensionality of the Lie algebra of
infinitesimal $CR$ automorphisms. 
In fact, our criterion only involves the underlying contact structure.
We found this fact very interesting. 
Indeed, it is preliminary to an approach where 
this 
(generalized)
contact structure is a priori given
as a characteristic of the manifold on which the
addition of a $CR$ structure is meant to 
modelling different geometrical or physical situations.
Our condition 
was called \textit{ideal nondegeneracy} 
in \cite{MN05}, where the fact that it was a sufficient criterion
for the finite dimensionality of the maximal extension was
correctly stated;  however, 
in the proof given there there was a gap that we fill here in
\S\ref{finitecr}.
\par 
Our proof of the existence 
of maximal  extensions of $CR$ algebras relies 
on a review of the classical work
on transitive geometry (see e.g. 
\cite{Conn81,Conn84,Gu1968,GS}), 
allowing us to substitute formal power series
to the canonical construction of  Tanaka in the $\Z$-graded case. 
Our discussion is restrained mostly at
a purely algebraic level. Thus, for a better understanding of
the geometrical significance of our results, 
we refer the reader  to \cite{AMN2013}
for a thorough introduction to $CR$ and homogeneous $CR$ manifolds.
\par 
Let us briefly describe the contents of the paper. \par
\S\ref{secprel} collects some general notions we 
thought relevant for the exposition.
Contact and transitive pairs and triples and $CR$ algebras
are defined, 
not restraining to finite dimensionality. 
We explicitly required that
the (possibly infinite dimensional)
 Lie algebras involved
 have a topological structure, although this structure is implicitly defined by the requirement 
that the isotropy subalgebra is closed and 
has finite codimension. We also list various  
nondegeneracy conditions
that will be investigated in the later sections.\par 
In \S\ref{descat} we construct a canonical descending chain 
of subspaces which is associated to a contact pair 
 to explain \textit{contact nondegeneracy}.
\par
An analogous construction in  
\S\ref{desccr}, characteristic of $CR$ algebras, 
describes
weak $CR$-nondegeneracy. We show by an example that it  
is in fact
a much more restrictive condition 
than nondegeneracy of the underlying contact structure.\par
We found convenient to explain in 
\S\ref{hcs} 
the way the abstract
contact triples of 
\S\ref{secprel} 
relate
to actual homogeneous
contact manifolds, to motivate 
our 
later
use 
of transitive contact geometry. \par
In \S\ref{secgrad} we introduce graded Lie algebras and the finiteness criterion of 
Noburu Tanaka. 
\par 
By using Tanaka's criterion, we prove in \S\ref{finitecr}
our main result, which states that 
\textsl{$CR$ algebras whose corresponding contact
triple is nondegenerate are finite dimensional.} \par
In \S\ref{sectrans} we deal with the general 
construction of the representation of transitive contact pairs
by structures involving vector fields 
with formal power series coefficients. 
This is the main tool in the
\textit{transitive geometry} of \cite{GS}: 
in this purely algebraic setting a germ of homogeneous space is
substituted by a topological Lie algebra 
in which the isotropy subalgebra is closed and has finite codimension. 
This describes a situation in which the values  of the infinitesimal automorphisms of the structure
span the full tangent space at a point.\par 
In the final \S\ref{ext} we utilize transitive geometry 
to construct maximal extensions of $CR$ algebras.
Then the result of \S\ref{finitecr} yields
the theorem that 
locally homogeneous $CR$ manifolds
with a nondegenerate underlying contact structure
have a finite dimensional Lie algebra of
infinitesimal $CR$ automorphisms and hence, in particular, their
$CR$ automorphisms make a 
finite dimensional Lie group.
\section{Definitions and preliminaries}\label{secprel}
In this section we introduce some notions which are relevant for an \textit{infinitesimal} description
of homogeneous (generalised) contact manifolds and of various geometrical structures that can be
defined on them. We are particularly interested in partial complex structures, and  $CR$
algebras (see \cite{MN05}) fall in this realm. 
\par 
A \textit{topological Lie algebra} is a Lie algebra $\gt_0$ over a topological field $\Bbbk$ 
with a fixed structure of  topological
Hausdorff vector space for which the Lie product is continuous. We say that $\gt_0$ is \emph{linearly compact}
if the intersection of any family of  affine subspaces of $\gt_0$ having the finite intersection property has
a nonempty intersection. (For details, see e.g. \cite{Conn84}).\par 
In the following we assume that $\gt_0$ is real and denote by $\gt$ its complexification.
Conjugation in $\gt$ is always understood with respect
to the real form~$\gt_0.$ For a $\C$-linear subspace $\lgt$ of $\gt$ we set 
\begin{equation}
 \Tilde{\lgt}_0=\{\re(Z)\mid {Z}\in\lgt\},\qquad \breve{\lgt}_0=\lgt\cap\bar{\lgt}\cap\gt_0.
\end{equation}
\begin{defn} \label{def1.1} 
\begin{itemize}
 \item A \emph{contact pair} is the pair $(\gt_0,\lgt_0)$ consisting of a linearly compact 
 topological real Lie algebra $\gt_0$
 and a closed linear subspace $\lgt_0$ of $\gt_0$ having a finite dimensional complement in $\gt_0$
and spanning $\gt_0$ as a Lie algebra. 
\item A \emph{contact $\C$-pair} is the pair $(\gt_0,\lgt)$ consisting of a linearly compact 
 topological real Lie algebra $\gt_0$
 and a closed complex
 linear subspace $\lgt$ of $\gt$ such that $(\gt_0,\Tilde{\lgt}_0)$
  is a contact pair. 
 \item A \emph{transitive pair} $(\gt_0,\isot_0)$ consists of a 
 linearly compact 
 topological Lie algebra $\gt_0$ and a closed subalgebra
$\isot_0$ having finite codimension in $\gt_0$ and not containing nontrivial ideals of $\gt_0.$ 
 \item A \emph{contact triple} is a triple $(\gt_0,\isot_0,\lgt_0)$ such that $(\gt_0,\lgt_0)$ is a contact pair,
$(\gt_0,\isot_0)$ a transitive pair, $\isot_0\subseteq\lgt_0$ and $[\isot_0,\lgt_0]\subseteq\lgt_0.$
 \item A \emph{contact $\C$-triple} is a triple $(\gt_0,\isot_0,\lgt),$ such that
 $(\gt_0,\isot_0)$ is transitive, $(\gt_0,\lgt)$ is a contact $\C$-pair, $\isot_0\subset\lgt$ and
 $[\isot_0,\lgt]\subseteq\lgt.$ 
 \item A \emph{fundamental almost $CR$ pair} is a contact $\C$-pair $(\gt_0,\lgt)$ for which 
 $\breve{\lgt}_0$ is a Lie subalgebra of $\gt_0$ and $[\breve{\lgt}_0,\lgt]\subseteq\lgt.$
The contact triple $(\gt_0,\breve{\lgt}_0,\Tilde{\lgt}_0)$ is said to be \emph{associated} to $(\gt_0,\lgt).$
 \item A \emph{fundamental $CR$ algebra} is an almost $CR$  
 pair $(\gt_0,\qt)$ such that $\qt$ is a complex Lie subalgebra of $\gt.$
\end{itemize}
\end{defn}
\begin{rmk}
 In the definition of a $CR$ algebra $(\gt_0,\qt)$ 
 of \cite{MN05} it was not required that 
 $(\qt+\bar{\qt})\cap\gt_0$ generates $\gt_0$ as a Lie algebra.
 When this is not the case and $(\gt_0,\qt_0)$ is a Lie algebra associated to a homogeneous
 $CR$ manifold $M_0$, then the manifold $M_0$ can be described, at least locally, as the product
 $M'_0\times{N}_0$ of a 
 $CR$ manifolds $M_0'$ having the same $CR$ dimension of $M_0$
 and a totally real $N_0.$ For many purposes we could
reduce to $M_0'$, which is a homogeneous $CR$ manifold 
whose $CR$ algebra at a point  has the same $\qt,$ while $\gt_0$
is substituted by the span of $\Tilde{\qt}_0.$  
\end{rmk}
We will use the following notions.
\begin{defn}[Nondegenracy conditions] \label{def1.1a} \quad \par
We say that
a contact triple $(\gt_0,\isot_0,\lgt_0)$ is  
\begin{itemize}
\item\emph{strictly nondegenerate} if $\{X\in\lgt_0\mid [X,\lgt_0]\subseteq\lgt_0\}=\isot_0.$ 
\item\emph{nondegenerate} 
if any ideal of $\gt_0$ which is contained in 
$\lgt_0$ is already contained in $\isot_0.$ 
\end{itemize}
A fundamental almost $CR$-pair $(\gt_0,\lgt)$ is 
\begin{itemize}
 \item \emph{strictly nondegenerate} if $\{Z\in\bar{\lgt}\mid [Z,\lgt]\subset\lgt+\bar{\lgt}\}=\lgt\cap\bar{\lgt}.$ 
 \item \emph{weakly non-degenerate} if there is no almost $CR$ pair $(\gt_0,\lgt')$ with
 $\lgt\subsetneqq\lgt'\subseteq\lgt+\bar{\lgt}.$  
 \item \emph{contact nondegenerate} if the associated contact triple $(\gt_0,\breve{\lgt}_0,\Tilde{\lgt}_0)$
 is nondegenerate.
\end{itemize}
\end{defn}
\begin{rmk} For a 
fundamental almost $CR$ pair,
strict nondegeneracy is equivalent to strict nondegeneracy of
the associated contact triple and 
implies weak nondegeneracy, which in turn
implies contact nondegeneracy.
 \end{rmk}

\section{Descending chain of a contact pair}\label{descat}

Given a contact pair $(\gt_0,\lgt_0)$, we construct a descending chain of $\R$-linear subspaces of $\gt_0$ 
\begin{equation}\label{eq2.2a}
\cdots \supseteq\Go_{-h}\supseteq\Go_{1-h}\supset\cdots\Go_{-1}\supseteq\Go_0\supseteq\Go_1\supseteq
\cdots\Go_h\supseteq\Go_{h+1}\supseteq \cdots 
\end{equation}
defining by recurrence 
\begin{equation}\label{eq2.2f} 
\begin{cases}
 \Go_{-1}=\lgt_0,\\
 \Go_{h}=\Go_{h+1}+[\Go_{h+1},\Go_{-1}], & \text{if $h<-1,$}\\
 \Go_{h}=\{X\in\Go_{-1}\mid [X,\Go_{-1}]\subseteq\Go_{h-1}\} & \text{if $h\geq{0}.$}
\end{cases}
 \end{equation}
 Since, by assumption, $\lgt_0$ is a  subspace of finite codimension
 that generates $\gt_0$ as a Lie algebra,
 there is a nonnegative integer $\muup$ such that $\Go_{-\muup}= \gt_0.$ Indeed
 the ascending chain of subspaces 
\begin{equation*}
 \{0\}=(\Go_{-1}/\Go_{-1})\subseteq (\Go_{-2}/\Go_{-1})\subseteq\cdots (\Go_{-h}/\Go_{-1})\subseteq
 (\Go_{-h-1}/\Go_{-1})\subseteq \cdots 
\end{equation*}
of the finite dimensional vector space $(\gt_0/\Go_{-1})$ stabilizes
and, by their definition,  if $(\Go_{-h}/\Go_{-1})=
 (\Go_{-h-1}/\Go_{-1}),$ then $\Go_{-r}=\Go_{-h}$ for all $r>h.$
For a contact pair ${\bigcup}_h\Go_{-h}=\gt_0,$  and hence $\Go_{-h}=\Go_{-h-1}=\gt_0$ 
for some $h>0.$
\begin{defn}
 The smallest nonnegative integer $\muup$ for which 
 $\Go_{-\muup}=\gt_0$ is called the \emph{depth},
 or \emph{kind}, 
 of the contact pair $(\gt_0,\lgt_0).$
\end{defn}
\begin{prop}\label{prop1.2f}
 Let $(\gt_0,\lgt_0)$ be a contact pair, \eqref{eq2.2a} the associated descending chain.
 Set 
\begin{equation}\label{eq1.3g}
 \ct_0={\bigcap}_{h\in\Z}\Go_h.
\end{equation}
  Then:
\begin{enumerate}
\item All $\Go_h$ are closed subspaces of $\gt_0.$ 
\item $\ct_0$ is the largest ideal of $\gt_0$ contained in $\lgt_0.$ 
\item \eqref{eq2.2a} is a filtration of $\gt_0.$ 
\item For all $h\geq{0},$ $\Go_h$ is a Lie subalgebra of $\gt_0.$ 
\end{enumerate}
\end{prop}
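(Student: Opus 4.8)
The plan is to prove the four assertions in the order (3) $\Rightarrow$ (4) $\Rightarrow$ (2), since everything rests on the filtration property, and to deduce the closedness statement (1) separately from the topology. I would first record the monotonicity of \eqref{eq2.2a}, which is immediate from \eqref{eq2.2f}: for $h<-1$ the inclusion $\Go_{h}\supseteq\Go_{h+1}$ is built into $\Go_{h}=\Go_{h+1}+[\Go_{h+1},\Go_{-1}]$, while for $h\geq 0$ one checks $\Go_{h}\subseteq\Go_{h-1}$ by an easy induction (if $X\in\Go_{h}$ then $[X,\Go_{-1}]\subseteq\Go_{h-1}\subseteq\Go_{h-2}$, so $X\in\Go_{h-1}$). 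The first genuine step is the base case of the filtration, $[\Go_{a},\Go_{-1}]\subseteq\Go_{a-1}$ for every $a\in\Z$: for $a\leq -1$ this is exactly the recursion defining $\Go_{a-1}$, and for $a\geq 0$ it is the defining property of $\Go_{a}$.

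The core is the full identity $[\Go_{a},\Go_{b}]\subseteq\Go_{a+b}$ for all $a,b\in\Z$, which I would establish in three stages via the Jacobi identity. First, fixing $b\leq -1$ and arguing by downward induction on $b$ starting from $b=-1$ (all $a$ at once), I expand $\Go_{b}=\Go_{b+1}+[\Go_{b+1},\Go_{-1}]$ and apply Jacobi to get $[\Go_{a},[\Go_{b+1},\Go_{-1}]]\subseteq[[\Go_{a},\Go_{b+1}],\Go_{-1}]+[\Go_{b+1},[\Go_{a},\Go_{-1}]]$; the inductive hypothesis at level $b+1$ together with the base case places both summands in $\Go_{a+b}$, and $[\Go_{a},\Go_{b+1}]\subseteq\Go_{a+b+1}\subseteq\Go_{a+b}$ handles the remaining piece. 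By antisymmetry of the bracket this also disposes of every pair in which at least one index is negative. The remaining and hardest case is $a,b\geq 0$, which I would treat by induction on $a+b$: for $X\in\Go_{a}$, $Y\in\Go_{b}$ one has $[X,Y]\in\Go_{a-1}\subseteq\Go_{-1}$, and for $W\in\Go_{-1}$ the Jacobi identity gives $[[X,Y],W]\in[\Go_{a},\Go_{b-1}]+[\Go_{a-1},\Go_{b}]\subseteq\Go_{a+b-1}$, where each summand is controlled either by the inductive hypothesis (strictly smaller total index) or by the already-proved negative cases when an index drops to $-1$; the characterization $\Go_{a+b}=\{Z\in\Go_{-1}\mid[Z,\Go_{-1}]\subseteq\Go_{a+b-1}\}$ then yields $[X,Y]\in\Go_{a+b}$. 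I expect the bookkeeping in this last induction to be the main obstacle, since one must track which reductions fall under the inductive hypothesis and which under the base case.

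With (3) in hand, assertion (4) is immediate: for $h\geq 0$ the filtration gives $[\Go_{h},\Go_{h}]\subseteq\Go_{2h}\subseteq\Go_{h}$, so $\Go_{h}$ is a subalgebra. For (2), I would first show that $\ct_0=\bigcap_{h\in\Z}\Go_{h}$ is an ideal: writing $\gt_0=\Go_{-\muup}$ and using the filtration, any $Z\in\ct_0$ satisfies $[Z,\gt_0]=[Z,\Go_{-\muup}]\subseteq\Go_{h-\muup}$ for every $h$, hence $[Z,\gt_0]\subseteq\bigcap_{h}\Go_{h-\muup}=\ct_0$; since $\ct_0\subseteq\Go_{-1}=\lgt_0$, it is an ideal contained in $\lgt_0$. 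Conversely, if $\iq$ is any ideal of $\gt_0$ with $\iq\subseteq\lgt_0$, then $[\iq,\Go_{-1}]\subseteq\iq\subseteq\Go_{-1}$, and an induction on $h\geq -1$ (using the defining form of $\Go_{h}$) shows $\iq\subseteq\Go_{h}$ for all $h\geq -1$, whence $\iq\subseteq\ct_0$; this proves maximality.

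Finally, for (1) I would split according to the sign of $h$. For $h\geq 0$ I argue by induction: assuming $\Go_{h-1}$ closed, the set $\Go_{h}=\Go_{-1}\cap\bigcap_{Y\in\Go_{-1}}\{X\mid[X,Y]\in\Go_{h-1}\}$ is an intersection of preimages of the closed set $\Go_{h-1}$ under the continuous linear maps $X\mapsto[X,Y]$, hence closed; the base case $\Go_{-1}=\lgt_0$ is closed by hypothesis. For $h<-1$ one has $\Go_{-1}\subseteq\Go_{h}\subseteq\gt_0$ with $\Go_{-1}$ closed of finite codimension, so $\Go_{h}$ corresponds to a subspace of the finite-dimensional Hausdorff quotient $\gt_0/\Go_{-1}$ and is therefore automatically closed. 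The point to be careful about here is that in a general topological Lie algebra neither the sum of two closed subspaces nor the image $[\Go_{h+1},\Go_{-1}]$ need be closed; the finite-codimension sandwich is precisely what circumvents this difficulty.
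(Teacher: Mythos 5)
Your proposal is correct and follows essentially the same route as the paper: closedness via preimages of $\Go_{h-1}$ under the continuous maps $X\mapsto[X,Y]$ for $h\geq 0$ and the finite-codimension sandwich for $h<-1$, and the filtration property $[\Go_{a},\Go_{b}]\subseteq\Go_{a+b}$ via the Jacobi identity with a descending induction for negative indices and an induction on $a+b$ for positive ones, from which (2) and (4) follow. If anything, your write-up is more complete than the paper's, which treats the mixed-sign cases ("when either $h_1\leq 0$ or $h_2\leq 0$") and the maximality statement (2) as immediate, whereas you supply the downward induction and the ideal-versus-chain induction explicitly.
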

\begin{proof} (1) The closedness of $\Go_{-1}$ 
 was assumed in the definition of a contact pair. 
For $h\geq{0}$ the statement follows by recurrence, 
because each $\Go_h$ is an intersection of the inverse images of 
the closed subspace $\Go_{h-1}$ 
by the continuous linear maps $\Go_{-1}\ni{X}\to[X,Y]\in\gt_0,$ for $Y$ varying in $\Go_{-1}.$ \par 
For $h<-1$, the statement is true because all $\R$-linear subspaces $V$ with $\lgt_0\subseteq{V}\subseteq\gt_0$
are closed in $\gt_0,$ since $\lgt_0$ is closed and has finite codimension in $\gt_0.$ 
This completes the proof of (1).
\par
(2) and the fact that $\Go_0$ is a Lie subalgebra of
$\gt_0$ are streighforward consequences of the defintions. \par 
To complete the proof, it suffices to check that \eqref{eq2.2a} if a filtration. 
We begin by checking the commutators of elements belonging  to  subspaces with negative indices. 
If $h<-1,$ and we assume that $[\Go_0,\Go_{h+1}]\subseteq\Go_{h+1}$, 
then 
\begin{align*}
 [\Go_0,\Go_h]&=[\Go_0,\Go_{h+1}+[\Go_{h+1},\Go_{-1}]]\\
 & \subseteq[\Go_0,\Go_{h+1}]+[[\Go_0,\Go_{h+1}],\Go_{-1}]
 +[\Go_{h+1},[\Go_0,\Go_{-1}]] \\
 &\subseteq
 \Go_{h+1}+[\Go_{h+1},\Go_{-1}]=\Go_h.
\end{align*}
This implies by recurrence that $[\Go_0,\Go_h]\subseteq\Go_h$ for all $h\leq{0}.$ \par 
Let now $h>0$ and assume that $[\Go_0,\Go_{h-1}]\subset\Go_{h-1}.$ 
By \eqref{eq2.2f} we already have $[\Go_h,\Go_{-1}]\subseteq\Go_{h-1}.$ Then
\begin{align*}
 [[\Go_0,\Go_h],\Go_{-1}]&\subseteq [[\Go_0,\Go_{-1}],\Go_h]+[\Go_0,[\Go_h,\Go_{-1}]]\\
 &\subseteq [\Go_h,\Go_{-1}]+[\Go_0,\Go_{h-1}]\subseteq \Go_{h-1}
\end{align*}
shows, by recurrence, that $[\Go_0,\Go_h]\subseteq\Go_h$ for all $h\geq{0.}$ \par 
By \eqref{eq2.2f} we have $[\Go_h,\Go_{-1}]\subseteq\Go_{h-1}$ for all
integers $h$ and this implies that $[\Go_{h_1},\Go_{h_2}]\subseteq\Go_{h_1+h_2}$ when either $h_1\leq{0}$
or $h_2\leq{0}.$ When both $h_1,h_2>0$, we can argue by recurrence on $h_1+h_2.$ In fact 
\begin{align*}
 [[\Go_{h_1},\Go_{h_2}],\Go_{-1}]&\subseteq  [[\Go_{h_1},\Go_{-1}],\Go_{h_2}]+ [\Go_{h_1},[\Go_{h_2},\Go_{-1}]]\\
& \subseteq  [\Go_{h_1-1},\Go_{h_2}]] [\Go_{h_1},\Go_{h_2-1}]\subseteq \Go_{h_1+h_2-1}
\end{align*}
if we assumed that $[\Go_{h'},\Go_{h''}]\subseteq\Go_{h'+h''}$ when $h'+h''<h_1+h_2.$ 
 This completes the proof of the fact that \eqref{eq2.2a} is a filtration
 and hence of the Proposition. 
\end{proof}

\begin{lem}
 If $\isot_0$ is any Lie subalgebra of $\gt_0$ such that $[\isot_0,\lgt_0]\subseteq\lgt_0$, then 
\begin{equation}
\vspace{-21pt}
[\isot_0,\Go_h]\subseteq\Go_h,\;\;  \forall{h}\in\Z.\end{equation}
\qed \end{lem}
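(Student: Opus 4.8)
The plan is to mimic the recursive argument used in the proof of Proposition~\ref{prop1.2f}, running a two--directional induction on $h$ anchored at the defining index $h=-1$. The only data available are that $\isot_0$ is closed under bracket and that $[\isot_0,\Go_{-1}]=[\isot_0,\lgt_0]\subseteq\lgt_0=\Go_{-1}$; everything else must be propagated along the chain \eqref{eq2.2a} by the Jacobi identity. So I would first record the base case $[\isot_0,\Go_{-1}]\subseteq\Go_{-1}$, which is exactly the standing hypothesis, and then treat the negative and the nonnegative indices separately.

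For the negative indices I would descend by induction: assuming $[\isot_0,\Go_{h+1}]\subseteq\Go_{h+1}$ for some $h<-1$, I expand $\Go_h=\Go_{h+1}+[\Go_{h+1},\Go_{-1}]$ from \eqref{eq2.2f} and apply Jacobi to the second summand, obtaining $[\isot_0,[\Go_{h+1},\Go_{-1}]]\subseteq[[\isot_0,\Go_{h+1}],\Go_{-1}]+[\Go_{h+1},[\isot_0,\Go_{-1}]]$. Both terms on the right land in $[\Go_{h+1},\Go_{-1}]\subseteq\Go_h$ by the inductive hypothesis and the base case, while $[\isot_0,\Go_{h+1}]\subseteq\Go_{h+1}\subseteq\Go_h$; hence $[\isot_0,\Go_h]\subseteq\Go_h$, and the induction continues down to $\Go_{-\muup}=\gt_0$.

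For the nonnegative indices I would ascend by induction, using the characterization $\Go_h=\{X\in\Go_{-1}\mid[X,\Go_{-1}]\subseteq\Go_{h-1}\}$. Fixing $A\in\isot_0$ and $X\in\Go_h$, membership $[A,X]\in\Go_h$ requires two checks: that $[A,X]\in\Go_{-1}$, which is immediate from $\Go_h\subseteq\Go_{-1}$ together with the base case, and that $[[A,X],Y]\in\Go_{h-1}$ for every $Y\in\Go_{-1}$. For the latter I would write the Jacobi identity in the form $[[A,X],Y]=[A,[X,Y]]-[X,[A,Y]]$; then $[X,Y]\in\Go_{h-1}$ by the definition of $\Go_h$, so the first term lies in $[\isot_0,\Go_{h-1}]\subseteq\Go_{h-1}$ by the inductive hypothesis, while $[A,Y]\in\Go_{-1}$ by the base case, so the second term lies in $[\Go_h,\Go_{-1}]\subseteq\Go_{h-1}$. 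This closes the induction on $h\geq 0$.

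I do not expect a genuine obstacle: the statement is a filtration--compatibility fact of the same flavour as the one already proved, and the content is entirely in the correct bookkeeping of indices and in choosing the sign convention for Jacobi so that the two resulting brackets fall into spaces whose behaviour is already controlled. The one point deserving care is that both inductions share the single anchor $h=-1$ and each step uses only the hypothesis $[\isot_0,\Go_{-1}]\subseteq\Go_{-1}$ together with its immediate predecessor; in particular I would make sure that the nonnegative step at $h=0$ reads $\Go_{h-1}=\Go_{-1}$ and invokes the base case, rather than appealing to a not-yet-established instance.
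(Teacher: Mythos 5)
Your proposal is correct and is essentially the argument the paper intends: the lemma is stated with its proof omitted because the two--directional induction you describe is precisely the computation already carried out in the proof of Proposition~\ref{prop1.2f} for $[\Go_0,\Go_h]\subseteq\Go_h$, which uses nothing about $\Go_0$ beyond $[\Go_0,\Go_{-1}]\subseteq\Go_{-1}$, so it transfers verbatim to any $\isot_0$ with $[\isot_0,\lgt_0]\subseteq\lgt_0$. Your bookkeeping is accurate, including the point that the $h=0$ step of the ascending induction is anchored directly at the hypothesis $[\isot_0,\Go_{-1}]\subseteq\Go_{-1}$.
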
 
 In particular, if $(\gt_0,\isot_0,\lgt_0)$ is a contact triple,
 then all subspaces $\Go_h$ of  the canonical filtration \eqref{eq2.2a} 
 are $\isot_0$-modules. 
\begin{lem} Let \eqref{eq2.2a} be the canonical filtration of the contact pair of a contact triple
$(\gt_0,\isot_0,\lgt_0).$ Then, 
\begin{enumerate}
\item $(\gt_0,\isot_0,\lgt_0)$ is strictly nondegenerate if and only if $\isot_0=\Go_0.$
\item 
$(\gt_0,\isot_0,\lgt_0)$ is nondegenerate, if and only if there is a positive integer $k$ 
such that $\Go_k\subseteq\isot_0.$ \end{enumerate}
\end{lem}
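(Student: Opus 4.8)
The plan is to read off both equivalences from the explicit recurrence \eqref{eq2.2f} together with Proposition \ref{prop1.2f}. For (1), I would simply unwind the definition of $\Go_0$: setting $h=0$ in \eqref{eq2.2f} gives
\begin{equation*}
\Go_0=\{X\in\Go_{-1}\mid[X,\Go_{-1}]\subseteq\Go_{-1}\}=\{X\in\lgt_0\mid[X,\lgt_0]\subseteq\lgt_0\},
\end{equation*}
which is precisely the subspace appearing in the definition of strict nondegeneracy. Hence the triple is strictly nondegenerate if and only if this subspace equals $\isot_0$, i.e.\ if and only if $\Go_0=\isot_0$; this requires no further work.

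For (2), the first step is to translate nondegeneracy into a statement about $\ct_0$. By Proposition \ref{prop1.2f}(2), $\ct_0$ is the largest ideal of $\gt_0$ contained in $\lgt_0$, so every ideal contained in $\lgt_0$ lies in $\ct_0$; thus the triple is nondegenerate if and only if $\ct_0\subseteq\isot_0$. One implication of the Lemma is then immediate: if $\Go_k\subseteq\isot_0$ for some $k>0$, then $\ct_0={\bigcap}_h\Go_h\subseteq\Go_k\subseteq\isot_0$, so the triple is nondegenerate.

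The substantial direction is to produce such a $k$ from the hypothesis $\ct_0\subseteq\isot_0$. Here I would pass to the finite-dimensional quotient $\gt_0/\isot_0$ (finite-dimensional because $\isot_0$ has finite codimension in $\gt_0$) and let $\pi\colon\gt_0\to\gt_0/\isot_0$ be the continuous projection. The images $\pi(\Go_h)$, $h\geq 0$, form a descending chain of subspaces of a finite-dimensional space, hence stabilize: there is an integer $N>0$ with $\pi(\Go_h)=\pi(\Go_N)={\bigcap}_h\pi(\Go_h)$ for all $h\geq N$. Since $\Go_N\subseteq\isot_0$ is equivalent to $\pi(\Go_N)=0$, and $\ct_0\subseteq\isot_0$ is equivalent to $\pi(\ct_0)=0$, the whole claim reduces to the identity
\begin{equation*}
{\bigcap}_h\pi(\Go_h)=\pi\bigl({\bigcap}_h\Go_h\bigr)=\pi(\ct_0).
\end{equation*}

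The inclusion $\supseteq$ is trivial, and the reverse inclusion is the main obstacle, being exactly where the linear compactness of $\gt_0$ must enter. I would argue as follows. Fix $v\in{\bigcap}_h\pi(\Go_h)$ and set $A_h=\pi^{-1}(v)\cap\Go_h$. Each $A_h$ is nonempty, because $v\in\pi(\Go_h)$, and is a closed affine subspace of $\gt_0$, being a coset of the closed subspace $\Go_h\cap\isot_0$: here one uses that $\Go_h$ is closed by Proposition \ref{prop1.2f}(1) and that $\pi^{-1}(v)$ is closed since $\pi$ is continuous into a Hausdorff space. The $A_h$ are nested, so the family $\{A_h\}$ has the finite intersection property, and linear compactness yields a point $X\in{\bigcap}_hA_h$. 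Such an $X$ lies in ${\bigcap}_h\Go_h=\ct_0$ and satisfies $\pi(X)=v$, whence $v\in\pi(\ct_0)$. This establishes the displayed identity and thereby the Lemma, with $k=N$. The only delicate point is the correct invocation of linear compactness for the nested closed cosets $A_h$; everything else is bookkeeping with the recurrence and the finite-dimensionality of $\gt_0/\isot_0$.
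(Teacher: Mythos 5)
Your proof is correct and takes essentially the same route as the paper's: statement (1) is read off directly from the definitions, and for (2) nondegeneracy is restated as $\ct_0\subseteq\isot_0$ via Proposition~\ref{prop1.2f}, after which one passes to the finite-dimensional quotient $\gt_0/\isot_0,$ where the descending chain of the images of the $\Go_h$ stabilizes. The only difference is one of completeness: the paper simply asserts that $\ct_0\subseteq\isot_0$ is equivalent to the vanishing of ${\bigcap}_h(\Go_h+\isot_0)/\isot_0,$ whereas you prove the nontrivial half of this assertion --- that the intersection of the images equals the image of the intersection --- by the nested-closed-coset argument, and this is exactly where the closedness of the $\Go_h$ (Proposition~\ref{prop1.2f}) and the linear compactness of $\gt_0$ enter, so your extra care fills in a step the paper leaves implicit.
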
 
\begin{proof} Statement $(1)$ follows immediately from the definitions. \par 
To prove $(2),$ we note that,
with $\ct_0={\bigcap}_{h\in\Z}\Go_h$ as in \eqref{eq1.3g},
the nondegeneracy condition can be restated by saying that
$\ct_0\subseteq\isot_0.$ This is equivalent to the fact that the intersection of all subspaces 
$(\Go_h+\isot_0)/\isot_0$ in $\gt_0/\isot_0$ is $\{0\}.$ The statement follows 
because these subspaces
form a descending chain of vector subspaces of the finite dimensional vector space $\gt_0/\isot_0.$ 
\end{proof}
\begin{defn} The \emph{order of degeneracy} of a 
 contact triple $(\gt_0,\isot_0,\lgt_0)$ is the smallest positive integer $k$ 
 for which $\Go_{k}\subseteq\isot_0.$
\end{defn}
We observe that $0$ is \textit{strict} nondegeneracy, and \textit{degenerate} corresponds to $\infty$-degenerate.
\begin{exam}
 To a contact pair $(\gt_0,\lgt_0)$ we can always associate the  triples $(\gt_0,\{0\},\lgt_0)$
 and  $(\gt_0/\at_0,\Go_0/\at_0,\lgt_0/\at_0),$ 
 where $\at_0$ is the largest ideal of $\gt_0$ which is contained in $\Go_0.$ 
 The first one is a contact triple iff $\gt_0$ is finite dimensional. 
The second one is a contact triple provided $\gt_0/\Go_0$ is finite dimensional.
\end{exam}
\begin{rmk} In \S\ref{hcs} we will explain how 
a contact triple is canonically associated to
a homogeneous contact manifold, providing in this way a geometrical motivation for
Definition~\ref{def1.1}.
\end{rmk}
It is useful to reformulate the nondegeneracy conditions of \S\ref{secprel}
in terms
of iterated Lie brackets. We define by recurrence \begin{equation*}
\begin{cases}
[X_1]=X_1,\\
[X_1,X_2]=\text{Lie product of $X_1$ and $X_2$ in $\gt$},\\
[X_1,\hdots,X_k,X_{k+1}]=[[X_1,\hdots,X_k],X_{k+1}] ,
\;\;\text{for $k\geq{2}$},\\
\text{if}\;\; X_1,\hdots,X_{k+1}\in\gt,
\end{cases}
\end{equation*}
\begin{prop} A necessary and sufficient condition in order that
a contact triple $(\gt_0,\isot_0,\lgt_0)$ be $k$-nondegenerate is that
\begin{equation}
\forall X\in\lgt_0\setminus\isot_0,\; \exists\, X_0,\hdots,X_k\in\lgt_0
\;\;\text{such that}\;\; \, [X,X_0,\hdots,X_k]\notin\lgt_0.
\end{equation}
\par 
A necessary and sufficient condition in order that a $CR$ algebra
$(\gt_0,\qt)$ be weakly nondegenerate is that \begin{equation}
\vspace{-20pt}
\forall Z\in{\qt},\; \exists\, Z_0,\hdots,Z_k\in\qt
\;\text{such that}\;\, [\bar{Z},Z_0,\hdots,Z_k]\notin
(\qt+\bar{\qt}).
\end{equation} \qed
\end{prop}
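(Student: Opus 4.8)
The plan is to reduce both assertions to the descending chains of \S\ref{descat} and \S\ref{desccr} together with the characterizations of ($k$‑)nondegeneracy already in hand, the one genuinely new ingredient being an inductive translation of membership in the chain into a condition on iterated brackets. First I would record the key lemma: \emph{for every $h\geq 0$ one has}
\begin{equation*}
\Go_h=\{X\in\lgt_0\mid [X,X_0,\dots,X_j]\in\lgt_0\ \text{for all}\ 0\leq j\leq h\ \text{and all}\ X_0,\dots,X_j\in\lgt_0\}.
\end{equation*}
This is proved by induction on $h$: the case $h=0$ is the defining property of $\Go_0$, and for the inductive step one unwinds $\Go_h=\{X\in\lgt_0\mid [X,Y]\in\Go_{h-1}\ \forall Y\in\lgt_0\}$, inserts the inductive description of $\Go_{h-1}$ applied to the element $[X,Y]$, and reindexes via $[[X,Y],W_0,\dots,W_{j}]=[X,Y,W_0,\dots,W_{j}]$.

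For the first assertion I would then combine this lemma with the fact (the Lemma preceding the definition of order of degeneracy) that the triple has order of degeneracy at most $k$, i.e.\ is $k$‑nondegenerate, exactly when $\Go_k\subseteq\isot_0$. By the displayed description, an element $X\in\lgt_0$ fails to lie in $\Go_k$ precisely when some iterated bracket $[X,X_0,\dots,X_j]$ with $j\leq k$ escapes $\lgt_0$; hence $\Go_k\subseteq\isot_0$ is equivalent to requiring that every $X\in\lgt_0\setminus\isot_0$ admit such an escaping bracket of length at most $k+1$. The point needing care is the bracket bookkeeping: membership in $\Go_k$ controls \emph{all} brackets of length $\leq k+1$, not only those of maximal length, so the condition must be read with the iterated bracket allowed to have any length up to $k+1$ (equivalently, one records for the minimal offending $j$ the bracket $[X,X_0,\dots,X_j]\notin\lgt_0$).

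The second assertion is treated identically, replacing the chain of \S\ref{descat} by the analogous descending chain attached in \S\ref{desccr} to the $CR$ algebra $(\gt_0,\qt)$, with $\qt+\bar{\qt}$ playing the role of $\lgt_0$ and conjugation entering through the generator $\bar{Z}$. Weak nondegeneracy means that no almost $CR$ pair $(\gt_0,\lgt')$ lies strictly between $\qt$ and $\qt+\bar{\qt}$, which translates into triviality of the stable term of that chain, exactly as nondegeneracy did in the contact case. The inductive bracket description then shows that $\bar{Z}$ (with $Z\in\qt$, $\bar{Z}\notin\qt$) can be adjoined to $\qt$ within $\qt+\bar{\qt}$ to yield a strictly larger almost $CR$ pair precisely when all iterated brackets $[\bar{Z},Z_0,\dots,Z_j]$, $Z_i\in\qt$, remain in $\qt+\bar{\qt}$; weak nondegeneracy is thus equivalent to the stated escape condition. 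For $Z$ with $\bar{Z}\in\qt$ every such bracket lies automatically in $\qt\subseteq\qt+\bar{\qt}$, so those $Z$ impose no restriction and may be omitted.

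The main obstacle is the reconciliation just flagged: the naive reading that only the top bracket $[X,X_0,\dots,X_k]$ of length $k+1$ must escape is strictly weaker than $X\notin\Go_k$, because an element may normalise $\lgt_0$ from outside—so that all its long brackets fall back into $\lgt_0$—while a short bracket already leaves $\lgt_0$. The inductive lemma is exactly what guarantees that $\Go_k$ detects \emph{every} bracket of length $\leq k+1$, and it is this, not any single top‑degree bracket, that matches $k$‑nondegeneracy. In the $CR$ setting the same care is required because $\qt$ is only a complex subalgebra: brackets of $\bar{Z}$ against $\qt$ need not be conjugation‑symmetric, so one must track membership in $\qt+\bar{\qt}$ at every stage of the chain rather than only at its last step.
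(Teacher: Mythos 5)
Your proposal is correct and is essentially the argument the paper intends: the Proposition is stated without proof precisely because it is meant to be the direct unfolding of the descending chains of \S\ref{descat} and \S\ref{desccr}, and your key lemma expressing $\Go_h$ through iterated brackets (proved by the same iterate-and-escape induction that the paper itself uses for Lemma~\ref{lem1.1}) is exactly that unfolding, combined with the chain characterizations $\Go_k\subseteq\isot_0$ of $k$-nondegeneracy and $\bigcap_h\bar{\qt}^{(h)}=\qt\cap\bar{\qt}$ of weak nondegeneracy. Your two caveats are also genuine and necessary corrections to the literal wording: the displayed conditions must be read with brackets of length \emph{at most} $k+1$, since an element whose short bracket already escapes $\lgt_0$ may have all longer brackets fall back into it (e.g. for the Heisenberg algebra $[X,Y]=Z$ with $\lgt_0=\langle X,Y\rangle$ and $\isot_0=\{0\}$, every bracket $[v,X_0,X_1]$ vanishes because $[v,X_0]$ is central, so no escaping bracket of exact length $k+1\geq 2$ exists even though $\Go_k=\{0\}$), and in the $CR$ statement the elements $Z$ with $\bar{Z}\in\qt$ must indeed be exempted, since for them every bracket $[\bar{Z},Z_0,\hdots,Z_k]$ remains in $\qt\subseteq\qt+\bar{\qt}$.
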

\section{Descending chain of a  $CR$ algebra} \label{desccr}
Let $(\gt_0,\qt)$ be a $CR$ algebra. We already noted that 
strict nondegeneracy implies weak nondegeneracy. It is well known that  
the two conditions are not equivalent. 
Set $\Tilde{\qt}=(\qt+\bar{\qt})$ and $\Tilde{\qt}_0=\tilde{\qt}\cap\gt_0.$ These are not, in general, subalgebras, but 
only linear subspaces of $\gt$ and $\gt_0,$ respectively. 
To better understand weak $CR$ nondegeneracy, 
we construct recursively descending chains of complex Lie subalgebras and of complex vector subspaces 
of $\gt:$ 
\begin{gather} \label{eq1.1}
 \bar{\qt}^{(0)}\supseteq \bar{\qt}^{(1)}\supseteq \cdots\supseteq \bar{\qt}^{(h)}\supseteq \bar{\qt}^{h+1}\supseteq \cdots \\
 \label{eq1.2}
 \Tilde{\qt}^{(0)}\supseteq \Tilde{\qt}^{(1)}\supseteq \cdots\supseteq \Tilde{\qt}^{(h)}\supseteq \Tilde{\qt}^{h+1}\supseteq \cdots
\end{gather}
by setting 
\begin{equation} \label{eq2.3h} 
\begin{cases}
 \left. 
\begin{aligned}
& \bar{\qt}^{(0)}=\bar{\qt},\\
&\Tilde{\qt}^{(0)}=\qt+\bar{\qt}, 
\end{aligned}\right. 
\qquad \text{for $h=0$,} \\[10pt]
\left. 
\begin{aligned}
 & \bar{\qt}^{(h)}=\{Z\in\bar{\qt}^{(h-1)}\mid [Z,\qt]\subseteq\Tilde{\qt}^{(h-1)}\},\\
 &\Tilde{\qt}^{(h)}=\qt+\bar{\qt}^{(h)}, 
\end{aligned}\right. 
\qquad \text{for $h\geq{1}.$}
\end{cases}
 \end{equation}
\begin{lem}\label{lem1.1}
 We have
 \begin{equation}\label{eqq1.4}
 {\bigcap}_{h\geq{0}}\bar{\qt}^{(h)}=
 {\bigcap}_{h\geq{1}}\{Z\in\bar{\qt}\mid [Z,W_1,\hdots,W_h]\in
\Tilde{\qt},\; 
\forall W_1,\hdots,W_h\in\qt\}.
\end{equation}
\end{lem}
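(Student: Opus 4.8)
The plan is to recognise the right-hand side of \eqref{eqq1.4} as an intersection of ``level-$h$'' conditions and to match these, one level at a time, with the subspaces $\bar{\qt}^{(h)}$. For $h\ge 1$ I introduce
\[
\mathcal{A}_h=\{Z\in\bar{\qt}\mid [Z,W_1,\dots,W_h]\in\Tilde{\qt}\ \text{for all}\ W_1,\dots,W_h\in\qt\},
\]
so that the right-hand side of \eqref{eqq1.4} is exactly $\bigcap_{h\ge 1}\mathcal{A}_h$. Since $\bar{\qt}^{(0)}=\bar{\qt}\supseteq\bar{\qt}^{(1)}$, the left-hand side equals $\bigcap_{h\ge 1}\bar{\qt}^{(h)}$. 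Hence the lemma reduces to the sharper claim
\[
\bar{\qt}^{(h)}={\bigcap}_{j=1}^{h}\mathcal{A}_j,\qquad h\ge 1,
\]
which I would establish by induction on $h$; intersecting over all $h$ then yields \eqref{eqq1.4}.

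For the base case $h=1$ one has $\Tilde{\qt}^{(0)}=\qt+\bar{\qt}=\Tilde{\qt}$, so by \eqref{eq2.3h} $\bar{\qt}^{(1)}=\{Z\in\bar{\qt}\mid[Z,\qt]\subseteq\Tilde{\qt}\}=\mathcal{A}_1$. For the inductive step I assume $\bar{\qt}^{(h)}=\bigcap_{j=1}^{h}\mathcal{A}_j$ and recall that $\bar{\qt}^{(h+1)}=\{Z\in\bar{\qt}^{(h)}\mid[Z,\qt]\subseteq\qt+\bar{\qt}^{(h)}\}$, using $\Tilde{\qt}^{(h)}=\qt+\bar{\qt}^{(h)}$. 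The engine of both inclusions is the linearity of the iterated bracket $X\mapsto[X,W_2,\dots,W_k]$ in its first slot, combined with the fact that $\qt$ is a subalgebra, so that $[q,W_2,\dots,W_k]\in\qt\subseteq\Tilde{\qt}$ whenever $q\in\qt$.

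To prove $\bar{\qt}^{(h+1)}\subseteq\bigcap_{j=1}^{h+1}\mathcal{A}_j$, I take $Z\in\bar{\qt}^{(h+1)}$; then $Z\in\bar{\qt}^{(h)}=\bigcap_{j=1}^{h}\mathcal{A}_j$, so only $Z\in\mathcal{A}_{h+1}$ remains. Given $W_1\in\qt$, I write $[Z,W_1]=q+Y$ with $q\in\qt$ and $Y\in\bar{\qt}^{(h)}\subseteq\mathcal{A}_h$; splitting $[Z,W_1,\dots,W_{h+1}]=[q,W_2,\dots,W_{h+1}]+[Y,W_2,\dots,W_{h+1}]$, the first summand lies in $\qt$ and the second in $\Tilde{\qt}$ because $Y\in\mathcal{A}_h$, whence $Z\in\mathcal{A}_{h+1}$. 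Conversely, I take $Z\in\bigcap_{j=1}^{h+1}\mathcal{A}_j$; since $Z\in\mathcal{A}_1$ I may write $[Z,W_1]=q+\bar{w}$ with $q\in\qt$, $\bar{w}\in\bar{\qt}$, and the same splitting---now using $[Z,W_1,\dots,W_{j+1}]\in\Tilde{\qt}$ for $j\le h$, which holds because $Z\in\mathcal{A}_{j+1}$---shows $\bar{w}\in\mathcal{A}_j$ for every $j\le h$, i.e.\ $\bar{w}\in\bar{\qt}^{(h)}$; since also $Z\in\bar{\qt}^{(h)}$, we get $[Z,\qt]\subseteq\qt+\bar{\qt}^{(h)}=\Tilde{\qt}^{(h)}$ and hence $Z\in\bar{\qt}^{(h+1)}$.

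The only genuinely delicate point---and the one I would be most careful about---is the bookkeeping of bracket lengths: peeling off the single factor $W_1$ lowers the relevant level from $j+1$ to $j$, so each summand must be matched against the correct $\mathcal{A}_j$. Once this matching is set up, and granted that $\qt$ is a subalgebra (so the $\qt$-component of $[Z,W_1]$ is always harmless), the argument is purely formal and needs neither the topology nor the finite codimension of the subspaces involved.
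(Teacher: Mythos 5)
Your proof is correct, but it is organized differently from the paper's. The paper proves the two inclusions of \eqref{eqq1.4} separately: the inclusion of the left side into the right side is dispatched in one line from $[\bar{\qt}^{(h)},\qt]\subseteq\qt+\bar{\qt}^{(h-1)}$, and the reverse inclusion is proved contrapositively by a descent: if $Z$ escapes some $\bar{\qt}^{(h)}$, one picks $W_1\in\qt$ with $[Z,W_1]\notin\qt+\bar{\qt}^{(h-1)}$, splits $[Z,W_1]=Z_1+W_1'$ with $Z_1\in\bar{\qt}$, $W_1'\in\qt$, notes that $Z_1\notin\bar{\qt}^{(h-1)}$, and iterates until some bracket $[Z,W_1,\hdots,W_k]$ falls outside $\Tilde{\qt}$. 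You instead prove the sharper level-by-level identity $\bar{\qt}^{(h)}=\bigcap_{j=1}^{h}\mathcal{A}_j$ by a single induction, obtaining both inclusions at once and the lemma by intersecting over $h$. The computational engine is identical in the two arguments --- split $[Z,W_1]$ into a $\qt$-part and a $\bar{\qt}$-part, discard the $\qt$-part because $\qt$ is a subalgebra, and match bracket length against chain level --- but your refinement is genuinely stronger content: it gives an explicit iterated-bracket characterization of \emph{every} term $\bar{\qt}^{(h)}$, not merely of the intersection, which makes the relation between chain length and the order of nondegeneracy (the $\nuup$ of \S\ref{desccr}) transparent. It also sidesteps the mild bookkeeping subtleties of the paper's descent (where one needs minimality of $h$ to justify the decomposition of $[Z,W_1]$ at each stage), at the cost of being somewhat longer. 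Both arguments are purely formal, as you note, and neither uses the topology or the finite codimension hypotheses.
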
 
\begin{proof} Let us denote by $\Az,$  $\Bz$ 
the left and right hand side of \eqref{eqq1.4}, respectively.
Since
 $[\bar{\qt}^{(h)},\qt]\subset\qt+\bar{\qt}^{(h-1)}$ 
 for all  $h>0,$
we obtain that $\Az\subseteq{\Bz}.$ \par
 Vice versa, if $Z\in\bar{\qt}$ does not belong to $\Az,$
 then
  there is a positive integer $h$ 
  with $Z\notin\bar{\qt}^{(h)}.$ This means that 
  there is $W_1\in\qt$
such that $[Z,W_1]\notin\qt+\bar{\qt}^{(h-1)}.$ 
If $h=1,$ this suffices to show that $Z\notin{\Bz}.$ 
 If $h>1,$ 
write $[Z,W_1]=Z_1+W'_1$ with $Z_1\in\bar{\qt}$
and $W_1\in\qt.$ Since, by assumption,
 $Z_1\notin\bar{\qt}^{(h-1)}$ 
we can find $W_2\in\qt$ such that
$[Z_1,W_2],$ and hence also $[Z,W_1,W_2],$ 
does not belong to $\qt+\bar{\qt}^{(h-2)}.$ 
Iterating this argument, 
we show that there are $W_1,\hdots,W_h\in\qt$ such that 
$[Z,W_1,\hdots,W_h]\notin\qt+\bar{\qt}$ and therefore $Z$ does not belong to $\Bz.$
This completes the proof.
\end{proof}
\begin{lem}\label{lem1.2}
 All $\bar{\qt}^{(h)}$ are complex Lie algebras and  
\begin{equation}\label{qu'}
 \qt'= \qt + {\bigcap}_{h\geq{0}}\bar{\gt}^{(h)}={\bigcap}_{h\geq{0}}\Tilde{\qt}^{(h)}
\end{equation}
is the largest complex Lie subalgebra satisfying 
\begin{equation}
 \qt\subseteq\qt'\subseteq \qt+\bar{\qt}.
\end{equation}
\end{lem}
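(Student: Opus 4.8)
The plan is to treat the two assertions separately: first that every term of the chain \eqref{eq1.1} is a subalgebra, and then to analyse the stable intersection.

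First I would prove by induction on $h$ that each $\bar{\qt}^{(h)}$ is a complex Lie subalgebra of $\gt$. The case $h=0$ is the hypothesis that $\bar{\qt}^{(0)}=\bar{\qt}$ is a subalgebra, being the conjugate of the subalgebra $\qt$. For the inductive step, $\bar{\qt}^{(h)}$ is by construction a complex linear subspace, so only closure under brackets must be checked. Given $Z,Z'\in\bar{\qt}^{(h)}$ and $W\in\qt$ I would expand $[[Z,Z'],W]=[Z,[Z',W]]-[Z',[Z,W]]$ by the Jacobi identity. Writing $[Z',W]=W_1+Y_1$ with $W_1\in\qt$ and $Y_1\in\bar{\qt}^{(h-1)}$ (legitimate since $Z'\in\bar{\qt}^{(h)}$ forces $[Z',W]\in\Tilde{\qt}^{(h-1)}$), the summand $[Z,[Z',W]]=[Z,W_1]+[Z,Y_1]$ lies in $\Tilde{\qt}^{(h-1)}$: indeed $[Z,W_1]\in\Tilde{\qt}^{(h-1)}$ because $Z\in\bar{\qt}^{(h)}$, while $[Z,Y_1]\in\bar{\qt}^{(h-1)}$ because $Z,Y_1$ lie in the subalgebra $\bar{\qt}^{(h-1)}$ by the inductive hypothesis. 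The symmetric term is handled identically, so $[[Z,Z'],W]\in\Tilde{\qt}^{(h-1)}$ for every $W\in\qt$; since also $[Z,Z']\in\bar{\qt}^{(h-1)}$, this yields $[Z,Z']\in\bar{\qt}^{(h)}$.

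Second I would pin down the intersection. The decisive finiteness input is that the isotropy $\breve{\qt}_0=\qt\cap\bar{\qt}\cap\gt_0$ has finite codimension in $\gt_0$, whence $\qt\cap\bar{\qt}$ has finite codimension in $\qt$ and $\bar{\qt}/(\qt\cap\bar{\qt})\cong\Tilde{\qt}/\qt$ is finite dimensional. A short induction, using that $Z\in\qt\cap\bar{\qt}$ satisfies $[Z,\qt]\subseteq\qt\subseteq\Tilde{\qt}^{(h-1)}$, shows $\qt\cap\bar{\qt}\subseteq\bar{\qt}^{(h)}$ for all $h$; hence the descending chain $\{\bar{\qt}^{(h)}\}$ lives, modulo $\qt\cap\bar{\qt}$, inside a finite dimensional space and therefore stabilizes, so that $\bar{\qt}^{(h)}=\bar{\qt}^{(k)}=\bigcap_{j\geq 0}\bar{\qt}^{(j)}$ for all $h\geq k$. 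This stabilization delivers both the asserted equality $\qt+\bigcap_h\bar{\qt}^{(h)}=\bigcap_h\Tilde{\qt}^{(h)}$, since the descending chain of sums $\qt+\bar{\qt}^{(h)}$ also stabilizes to $\qt+\bar{\qt}^{(k)}$, and the fact that $\qt'$ is a subalgebra: from $\bar{\qt}^{(k)}=\bar{\qt}^{(k+1)}$ one reads off $[\bar{\qt}^{(k)},\qt]\subseteq\qt+\bar{\qt}^{(k)}=\qt'$, which combined with $[\qt,\qt]\subseteq\qt$ and $[\bar{\qt}^{(k)},\bar{\qt}^{(k)}]\subseteq\bar{\qt}^{(k)}$ (from the first part) gives $[\qt',\qt']\subseteq\qt'$. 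The inclusions $\qt\subseteq\qt'\subseteq\qt+\bar{\qt}$ are immediate.

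Finally, for maximality, let $\lgt$ be any complex Lie subalgebra with $\qt\subseteq\lgt\subseteq\qt+\bar{\qt}$. I would first observe that $\lgt=\qt+(\lgt\cap\bar{\qt})$: if $X\in\lgt$ is written $X=W+Y$ with $W\in\qt$ and $Y\in\bar{\qt}$, then $Y=X-W\in\lgt$. Then I would prove $\lgt\cap\bar{\qt}\subseteq\bar{\qt}^{(h)}$ for all $h$ by induction; granting the inclusion at level $h-1$, any $Z\in\lgt\cap\bar{\qt}\subseteq\lgt$ satisfies $[Z,\qt]\subseteq[\lgt,\lgt]\subseteq\lgt=\qt+(\lgt\cap\bar{\qt})\subseteq\qt+\bar{\qt}^{(h-1)}=\Tilde{\qt}^{(h-1)}$, which together with $Z\in\bar{\qt}^{(h-1)}$ gives $Z\in\bar{\qt}^{(h)}$. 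Hence $\lgt\cap\bar{\qt}\subseteq\bigcap_h\bar{\qt}^{(h)}$ and $\lgt\subseteq\qt'$, so $\qt'$ is indeed the largest such subalgebra. I expect the main obstacle to be the passage to the infinite intersection: both the identity $\qt+\bigcap_h\bar{\qt}^{(h)}=\bigcap_h(\qt+\bar{\qt}^{(h)})$ and the closure of $\qt'$ under the bracket would fail for a general descending chain in an infinite dimensional algebra, so the argument genuinely rests on the finite codimension of $\breve{\qt}_0$ forcing the chain to stabilize (this is also the content behind Lemma~\ref{lem1.1}); by contrast, the Jacobi bookkeeping of the first part is routine, provided each bracket is tested against the correct level $\Tilde{\qt}^{(h-1)}$.
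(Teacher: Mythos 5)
Your proof is correct, and its first half (the Jacobi--identity induction showing each $\bar{\qt}^{(h)}$ is a subalgebra) is essentially the paper's own argument. Where you genuinely diverge is the second half. The paper never invokes stabilization of the chain: it deduces both the identity \eqref{qu'} and maximality from Lemma~\ref{lem1.1}, i.e.\ from the characterization of ${\bigcap}_{h}\bar{\qt}^{(h)}$ as the set of $Z\in\bar{\qt}$ all of whose iterated brackets with elements of $\qt$ stay in $\qt+\bar{\qt}$. Maximality then falls out in one line: if $\qt\subseteq\lt\subseteq\qt+\bar{\qt}$ is a complex subalgebra and $Z\in\lt\cap\bar{\qt}$, every iterated bracket $[Z,W_1,\hdots,W_h]$ with $W_i\in\qt\subseteq\lt$ stays in $\lt\subseteq\qt+\bar{\qt}$, so $\lt\cap\bar{\qt}\subseteq{\bigcap}_h\bar{\qt}^{(h)}$ and $\lt=\qt+(\lt\cap\bar{\qt})\subseteq\qt'$; your maximality induction simply re-proves this direction of Lemma~\ref{lem1.1} inline, which is fine. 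Your stabilization argument is also legitimate within the paper's framework: since the triple $(\gt_0,\breve{\qt}_0,\Tilde{\qt}_0)$ associated to a $CR$ algebra is a contact triple, $(\gt_0,\breve{\qt}_0)$ is a transitive pair, so $\breve{\qt}_0$ has finite codimension in $\gt_0$ and the chain \eqref{eq1.1} indeed stabilizes.

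One point of your closing assessment is, however, off the mark, and it is exactly what the paper's route buys. The identity $\qt+{\bigcap}_h\bar{\qt}^{(h)}={\bigcap}_h\bigl(\qt+\bar{\qt}^{(h)}\bigr)$ does \emph{not} genuinely rest on finite codimension, because \eqref{eq1.1} is not a general descending chain: every term contains $\qt\cap\bar{\qt}$ (your own easy induction), and two decompositions of an element of $\qt+\bar{\qt}^{(h)}$ differ by an element of $\qt\cap\bar{\qt}$. So if $X\in{\bigcap}_h\bigl(\qt+\bar{\qt}^{(h)}\bigr)$, fix one decomposition $X=W_0+Y_0$ with $W_0\in\qt$, $Y_0\in\bar{\qt}$; comparing with any decomposition $X=W_h+Y_h$, $Y_h\in\bar{\qt}^{(h)}$, gives $Y_0=Y_h+(W_h-W_0)$ with $W_h-W_0\in\qt\cap\bar{\qt}\subseteq\bar{\qt}^{(h)}$, whence $Y_0\in\bar{\qt}^{(h)}$ for every $h$, i.e.\ $X\in\qt+{\bigcap}_h\bar{\qt}^{(h)}$. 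Bracket-closure of $\qt'$ then follows from this identity, part one, and $[{\bigcap}_h\bar{\qt}^{(h)},\qt]\subseteq{\bigcap}_h\Tilde{\qt}^{(h)}$, with no finiteness anywhere. Thus the paper's argument is strictly more general: it survives even when $\bar{\qt}/(\qt\cap\bar{\qt})$ is infinite dimensional and the chain never stabilizes, whereas your argument is tied to the finite-codimension hypothesis that the framework happens to supply.
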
 
\begin{proof} Let us show first 
that the $\bar{\qt}^{(h)}$'s are Lie
subalgebras. This holds true for $h=0,$
because the  conjugate $\bar{\qt}$ of $\qt$ with respect to the real
form $\gt_0$ is a complex Lie subalgebra of $\gt.$  
 Assume that we already know that $\bar{\qt}^{(h)}$ is a Lie algebra for some 
 $h\geq{0}.$ If $Z_1,Z_2\in\bar{\qt}^{(h+1)},$ we have $[Z_1,Z_2]\in\bar{\qt}^{(h)}$
 because $\bar{\qt}^{(h+1)}\subseteq\bar{\qt}^{(h)}$ and, by our inductive assumption, 
 $\bar{\qt}^{(h)}$ is a Lie algebra. Moreover, 
\begin{align*}
 [[Z_1,Z_2],{\qt}] & \subseteq [Z_1,[Z_2,\qt]]+[Z_2,[Z_1,\qt]]\subseteq
 [Z_1,\bar{\qt}^{(h)}+{\qt}]+[Z_2,\bar{\qt}^{(h)}+{\qt}]\\
 & \subseteq  [Z_1,\bar{\qt}^{(h)}]
 +[Z_1,\qt]+[Z_2,\bar{\qt}^{(h)}]+[Z_2,\qt]\subseteq 
 \bar{\qt}^{(h)}+{\qt},
\end{align*}
shows that $[Z_1,Z_2]\in\bar{\qt}^{(h+1)}.$ Clearly the 
right hand side of \eqref{qu'} 
is a Lie subalgebra of $\gt$ with
 $\qt\subseteq\qt'\subseteq\qt+\bar{\qt}.$ 
By Lemma~\ref{lem1.1} it is the maximal complex Lie subalgebra 
of $\gt$ containing $\qt$ and contained in $\qt+\bar{\qt}.$ 
In fact it
contains $\lt\cap\bar{\qt}$ for all complex Lie subalgebras 
with $\qt\subseteq\lt\subseteq\qt+\bar{\qt}.$ 
\end{proof}
\begin{prop}
The following are equivalent 
\begin{enumerate}
 \item  $(\gt_0,\qt)$ is weakly $CR$ nondegenerate;
 \item   ${\bigcap}_{h\geq{0}}\bar{\qt}^{(h)}=\qt\cap\bar{\qt};$ 
 \item $\bigcap_{h\geq{0}}\Tilde{\qt}^{(h)}={\qt}.$ \qed
\end{enumerate}
\end{prop}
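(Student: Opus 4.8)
The plan is to deduce the three-way equivalence from Lemmas~\ref{lem1.1} and~\ref{lem1.2} together with the iterated-bracket criterion for weak nondegeneracy recorded at the end of \S\ref{descat}. Throughout I write $\qt'=\qt+\bigcap_{h\geq 0}\bar{\qt}^{(h)}=\bigcap_{h\geq 0}\Tilde{\qt}^{(h)}$ for the subspace furnished by Lemma~\ref{lem1.2}, i.e.\ the largest complex Lie subalgebra with $\qt\subseteq\qt'\subseteq\qt+\bar{\qt}$. I would organize the argument as $(1)\Leftrightarrow(2)$ and $(2)\Leftrightarrow(3)$.

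First I would record the inclusion $\qt\cap\bar{\qt}\subseteq\bigcap_{h\geq 0}\bar{\qt}^{(h)}$, valid for every $CR$ algebra. It holds by induction on $h$: it is clear for $h=0$, and if $Z\in\qt\cap\bar{\qt}$ already lies in $\bar{\qt}^{(h-1)}$, then, since $Z\in\qt$ and $\qt$ is a complex subalgebra, $[Z,\qt]\subseteq\qt\subseteq\Tilde{\qt}^{(h-1)}$, so $Z\in\bar{\qt}^{(h)}$. Granting this, $(2)\Leftrightarrow(3)$ is immediate from Lemma~\ref{lem1.2}: because $\qt'=\qt+\bigcap_h\bar{\qt}^{(h)}$ with $\bigcap_h\bar{\qt}^{(h)}\subseteq\bar{\qt}$, the equality $\bigcap_h\Tilde{\qt}^{(h)}=\qt$ of $(3)$ is the same as $\bigcap_h\bar{\qt}^{(h)}\subseteq\qt\cap\bar{\qt}$, which, combined with the reverse inclusion just proved, is exactly the equality $\bigcap_h\bar{\qt}^{(h)}=\qt\cap\bar{\qt}$ of $(2)$.

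For $(1)\Leftrightarrow(2)$ I would invoke the iterated-bracket criterion of \S\ref{descat}, according to which $(\gt_0,\qt)$ is weakly $CR$ nondegenerate exactly when every $Z\in\bar{\qt}$ whose iterated brackets $[Z,W_1,\dots,W_h]$ against elements $W_1,\dots,W_h\in\qt$ all remain in $\Tilde{\qt}=\qt+\bar{\qt}$ necessarily lies in $\qt\cap\bar{\qt}$. By Lemma~\ref{lem1.1} the set of such $Z$ is precisely $\bigcap_{h\geq 0}\bar{\qt}^{(h)}$. Hence weak nondegeneracy is literally the statement $\bigcap_h\bar{\qt}^{(h)}\subseteq\qt\cap\bar{\qt}$, i.e.\ condition $(2)$. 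Chaining the two equivalences gives $(1)\Leftrightarrow(2)\Leftrightarrow(3)$.

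The step needing the most care is the passage between the defining form of weak nondegeneracy --- the absence of an almost $CR$-pair enlargement $\qt\subsetneqq\lgt'\subseteq\qt+\bar{\qt}$ --- and the infinitesimal reformulation used above. One half is transparent and I would check it directly: the subalgebra $\qt'$ of Lemma~\ref{lem1.2} always defines an almost $CR$-pair, since $\qt'+\overline{\qt'}=\qt+\bar{\qt}$ leaves the underlying contact $\C$-pair unchanged, $\qt'\cap\overline{\qt'}\cap\gt_0$ is a real subalgebra because $\qt'$ is complex, and $[\qt'\cap\overline{\qt'}\cap\gt_0,\qt']\subseteq[\qt',\qt']\subseteq\qt'$; thus $\qt'\supsetneq\qt$ would already violate $(1)$. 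The opposite half --- that $\qt'=\qt$ rules out even the non-integrable almost $CR$-pair enlargements, which need not be complex subalgebras --- is the genuine obstacle, and is exactly what the iterated-bracket criterion supplies. This is why I would route $(1)\Leftrightarrow(2)$ through that criterion and Lemma~\ref{lem1.1}, rather than attempting to extract it from the maximality of $\qt'$ alone.
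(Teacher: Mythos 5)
Your reduction of $(2)\Leftrightarrow(3)$ to Lemma~\ref{lem1.2}, together with the inclusion $\qt\cap\bar{\qt}\subseteq\bigcap_{h\geq 0}\bar{\qt}^{(h)}$, is correct, and so is your direct check that a proper complex Lie subalgebra enlargement of $\qt$ is an almost $CR$ pair and hence violates weak nondegeneracy. The gap lies in the remaining implication, which is the only substantive one: you obtain it by citing the iterated-bracket criterion stated at the end of \S\ref{descat}. That criterion is stated in the paper without proof, and its $CR$ half is not a consequence of the \S\ref{descat} machinery; as your own translation through Lemma~\ref{lem1.1} shows, it \emph{is} the equivalence $(1)\Leftrightarrow(2)$ you are asked to prove. (It is even defective as literally stated: for $Z\in\qt\cap\bar{\qt}$ all brackets $[\bar{Z},Z_0,\hdots,Z_k]$ stay in $\qt$, so the displayed condition can never hold for such $Z$; you silently corrected the quantifier when paraphrasing.) So the argument for the hard direction is circular: you correctly isolate the genuine obstacle --- that $(3)$ must exclude non-integrable almost $CR$ pair enlargements, not only complex subalgebra ones --- and then defer exactly that point to an unproven restatement of the goal. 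The paper's own tacit proof instead rests on the maximality assertion of Lemma~\ref{lem1.2}, and leaves the same point implicit.

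The obstacle can be removed directly, and this is the piece your proposal is missing. Suppose $(\gt_0,\lgt')$ is an almost $CR$ pair with $\qt\subseteq\lgt'\subseteq\qt+\bar{\qt}$, and set $\nt'=\lgt'\cap\bar{\lgt'}$. Any $v\in\lgt'$ can be written $v=w+\bar{u}$ with $w,u\in\qt$; then $\bar{u}=v-w\in\lgt'\cap\bar{\qt}\subseteq\nt'$, so $\lgt'=\qt+\nt'$. Since $\nt'$ is conjugation-stable, it is the complexification of the real subspace $\lgt'\cap\bar{\lgt'}\cap\gt_0$ of Definition~\ref{def1.1}, so the almost $CR$ pair axioms say precisely that $\nt'$ is a complex Lie subalgebra with $[\nt',\lgt']\subseteq\lgt'$. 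Hence $[\lgt',\lgt']\subseteq[\qt,\qt]+[\nt',\lgt']+[\nt',\nt']\subseteq\lgt'$: every almost $CR$ pair enlargement of a $CR$ algebra is automatically a complex Lie subalgebra. By the maximality in Lemma~\ref{lem1.2} any such enlargement is contained in $\qt'=\bigcap_{h\geq 0}\Tilde{\qt}^{(h)}$, so a strict enlargement forces $\bigcap_{h\geq 0}\Tilde{\qt}^{(h)}\supsetneqq\qt$, which is the contrapositive of $(3)\Rightarrow(1)$. Inserting this argument in place of the appeal to the \S\ref{descat} criterion makes your proof complete, and in fact establishes that criterion as a corollary rather than an input.
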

Sequences \eqref{eq1.1} and \eqref{eq1.2} can be used 
to measure weak nondegeneracy. 
Let the \emph{lenght} of a descending chain 
of vector spaces 
\begin{equation*}
 V_0\supseteq V_1\supseteq \cdots\supseteq V_h\supseteq V_{h+1}
 \supseteq \cdots 
\end{equation*}
be
the smallest integer $\nuup$ such that $V_h=V_{\nuup}$ 
for all $h\geq\nuup.$ 
By Lemma~\ref{lem1.2} we have the statement:
\begin{prop} 
 The sequences \eqref{eq1.1} and \eqref{eq1.2} 
 have the same lenght $\nuup$ and all their terms with indices
 smaller than $\nuup$ are different. 
 \qed
\end{prop}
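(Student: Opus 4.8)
The plan is to exploit the fact that the two chains are not independent: by the very definition \eqref{eq2.3h} one has $\Tilde{\qt}^{(h)}=\qt+\bar{\qt}^{(h)}$ for \emph{every} $h\geq 0$ (for $h=0$ this is the identity $\Tilde{\qt}^{(0)}=\qt+\bar{\qt}=\qt+\bar{\qt}^{(0)}$). Hence both sequences are governed by the single descending chain $\{\bar{\qt}^{(h)}\}$, and it suffices to prove that passing to $\qt+\bar{\qt}^{(h)}$ neither creates nor destroys strict inclusions, i.e. that
\[
\bar{\qt}^{(h)}=\bar{\qt}^{(h+1)}\quad\Longleftrightarrow\quad \Tilde{\qt}^{(h)}=\Tilde{\qt}^{(h+1)}\qquad\text{for all }h\geq 0 .
\]
Once this equivalence is established, the two chains become constant at exactly the same index, which is then the common length $\nuup$; and both are strictly decreasing before that index, so the two assertions of the Proposition follow together.

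The heart of the argument is a single observation about the lower end of the chain \eqref{eq1.1}. First I would check, by induction on $h$ and using that $\qt$ is a complex Lie subalgebra of $\gt$, that $\qt\cap\bar{\qt}\subseteq\bar{\qt}^{(h)}$ for all $h\geq 0$: indeed, for $U\in\qt\cap\bar{\qt}\subseteq\bar{\qt}^{(h-1)}$ one has $[U,\qt]\subseteq\qt\subseteq\Tilde{\qt}^{(h-1)}$, so $U\in\bar{\qt}^{(h)}$. Consequently $\qt\cap\bar{\qt}^{(h)}=\qt\cap\bar{\qt}$ is \emph{independent of} $h$, and the second isomorphism theorem yields a canonical isomorphism
\[
\Tilde{\qt}^{(h)}/\qt=(\qt+\bar{\qt}^{(h)})/\qt\;\cong\;\bar{\qt}^{(h)}/(\qt\cap\bar{\qt}),
\]
compatible with the inclusions of the chains. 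Since $\qt\cap\bar{\qt}$ is a fixed subspace contained in every term of \eqref{eq1.1}, the correspondence $\bar{\qt}^{(h)}\mapsto\Tilde{\qt}^{(h)}$ is faithful on inclusions, and the displayed equivalence follows.

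The forward implication of that equivalence is immediate from $\Tilde{\qt}^{(h)}=\qt+\bar{\qt}^{(h)}$. The implication I expect to be the \emph{main obstacle} is the converse: in general $\qt+\bar{\qt}^{(h)}=\qt+\bar{\qt}^{(h+1)}$ need not force $\bar{\qt}^{(h)}=\bar{\qt}^{(h+1)}$, because the subalgebra $\qt$ might absorb the difference between two distinct terms. This is exactly the point that the constancy of $\qt\cap\bar{\qt}^{(h)}$ resolves: the two quotients above having the same image forces their numerators to coincide. It is here, and only here, that the hypothesis that $\qt$ be a Lie algebra (and thus Lemma~\ref{lem1.2}) is genuinely used.

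Finally I would record the monotonicity needed to pass from ``stabilizes at a given index'' to ``strictly decreases before $\nuup$''. Directly from \eqref{eq2.3h}, if $\bar{\qt}^{(h)}=\bar{\qt}^{(h+1)}$ then $\Tilde{\qt}^{(h)}=\Tilde{\qt}^{(h+1)}$, whence
\[
\bar{\qt}^{(h+2)}=\{Z\in\bar{\qt}^{(h+1)}\mid[Z,\qt]\subseteq\Tilde{\qt}^{(h+1)}\}=\{Z\in\bar{\qt}^{(h)}\mid[Z,\qt]\subseteq\Tilde{\qt}^{(h)}\}=\bar{\qt}^{(h+1)} ,
\]
so once two consecutive terms agree the chain is constant thereafter; by the equivalence the same holds for \eqref{eq1.2}. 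Thus $\nuup$ is precisely the first index of stabilization of each chain, the terms below it are pairwise distinct, and (since $\bar{\qt}/(\qt\cap\bar{\qt})\cong\Tilde{\qt}/\qt$ is finite dimensional, being the $CR$ dimension) $\nuup$ is finite. Combining with the equivalence above gives that the two lengths coincide, completing the plan.
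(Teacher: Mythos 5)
Your proof is correct. The paper offers no written argument for this proposition --- it is stated as an immediate consequence of Lemma~\ref{lem1.2}, with a bare \qed --- so your proposal supplies exactly the details that are left implicit. The crucial point is your observation that $\qt\cap\bar{\qt}^{(h)}=\qt\cap\bar{\qt}$ for every $h$ (proved by induction, using that $\qt$ is a Lie subalgebra, so that $[U,\qt]\subseteq\qt\subseteq\Tilde{\qt}^{(h-1)}$ for $U\in\qt\cap\bar{\qt}$): this is what upgrades the trivial implication $\bar{\qt}^{(h)}=\bar{\qt}^{(h+1)}\Rightarrow\Tilde{\qt}^{(h)}=\Tilde{\qt}^{(h+1)}$ to a same-index equivalence. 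Without it, the definitions \eqref{eq2.3h} only yield the shifted implication $\Tilde{\qt}^{(h-1)}=\Tilde{\qt}^{(h)}\Rightarrow\bar{\qt}^{(h)}=\bar{\qt}^{(h+1)}$, which bounds the length $\nuup_{1}$ of \eqref{eq1.1} and the length $\nuup_{2}$ of \eqref{eq1.2} by $\nuup_{2}\leq\nuup_{1}\leq\nuup_{2}+1$ but does not force equality; your constancy lemma (equivalently, the direct computation: $Z\in\bar{\qt}^{(h)}$ written as $Z=W+Z'$ with $W\in\qt$, $Z'\in\bar{\qt}^{(h+1)}$ gives $W\in\qt\cap\bar{\qt}^{(h)}=\qt\cap\bar{\qt}\subseteq\bar{\qt}^{(h+1)}$, hence $Z\in\bar{\qt}^{(h+1)}$) closes precisely this gap. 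The propagation step, that two equal consecutive terms force constancy from then on, is also genuinely needed for ``length'' to coincide with the first stabilization index, and you prove it correctly. One caveat: your final parenthetical claim that $\nuup$ is automatically finite because $\bar{\qt}/(\qt\cap\bar{\qt})$ is finite dimensional is not warranted by the paper's definitions, which only require $\Tilde{\qt}_0$ to have finite codimension in $\gt_0$ and place no bound on $\dim_{\C}\bigl(\bar{\qt}/(\qt\cap\bar{\qt})\bigr)$; indeed the definition immediately following the proposition treats ``finite length'' as a nontrivial hypothesis. Since the proposition itself asserts nothing about finiteness, this overreach does not affect the validity of your proof of the statement.
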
 
\begin{defn}
 We say that $(\gt_0,\qt)$ is $(\nuup-1)$-nondegenerate 
 if the descending chains \eqref{eq1.1} and \eqref{eq1.2}
 have finite lenght $\nuup.$ 
\end{defn}
\begin{rmk}
 Strict nondegeneracy is $0$-nondegeneracy, while weak nondegeneracy is $\nuup$-nondegeneracy for some
 $\nuup<+\infty .$ 
\end{rmk}
\begin{exam} For $CR$ algebras,
 contact  is a weaker notion than weak nondegeneracy. A score of examples can be obtained 
 by considering real orbits in complex flag manifolds
 ( see \cite{AMN06, AMN06b}) whose $CR$ algebras are fundamental, but not weakly nondegenerate. 
 We give here a simple example, consisting of the minimal orbit $M_0$ of $\SU(1,5)$ in the complex
 flag manifold $M$ consisting of triples $(\ell_2,\ell_3,\ell_4)$ of complex $2,3,4$-planes in $\C^6$ with
 $\ell_2\subset\ell_3\subset\ell_4.$ A point of $M_0$ is a flag in the 
  $\SU(1,5)$-orthogonal of an isotropic line. Let us give the explicit matrix representation. We define  
  $\gt_0\simeq\su(1,5)$ and $\qt,\qt'\subset\slt_6(\C)$ by 
\begin{align*}
 \gt_0&=\left.\left\{ 
\begin{pmatrix}
 \lambdaup & \zetaup_1 & \zetaup_2 & \zetaup_3 & \zetaup_4 & \iq\sigma \\
 z_1 & \iq{t}_1 & -\bar{w}_1 & -\bar{w}_2 & -\bar{w}_3 & -\bar{\zetaup}_1 \\
 z_2 & w_1 & \iq{t}_2 & -\bar{w}_4 & -\bar{w}_5 & -\bar{\zetaup}_2\\
 z_3 & w_2 & w_4 &\iq{t_3}&-\bar{w}_6 & -\bar{\zetaup}_3\\
 z_4 & w_3 & w_5 & w_6 &\iq{t}_4 & -\bar{\zetaup}_4 \\
 \iq{s} & -\bar{z}_1 &-\bar{z}_2&-\bar{z}_3&-\bar{z}_4 & -\bar{\lambdaup}
\end{pmatrix}\right| \begin{gathered}
\lambdaup,z_i,\zetaup_i,w_i\in\C,\; s,\sigmaup,t_i\in\R,\\
2\im(\lambdaup)+{\sum}t_i 
=0\end{gathered}\right\},\\
\qt&=\left\{ 
\begin{pmatrix}
 * & * & * & * & * & *\\
  * & * & * & * & * & *\\
   0 & 0 & * & * & * & *\\
    0 & 0 & 0 & * & * & *\\
   0& 0 & 0 & 0 & * & *\\
   0& 0 & 0 & 0 & * & *   
\end{pmatrix}\right\},\;\; \qt'=\left\{ 
\begin{pmatrix}
 * & * & * & * & * & *\\
  * & * & * & * & * & *\\
   0 & 0 & * & * & * & *\\
    0 & 0 & * & * & * & *\\
   0& 0 & 0 & 0 & * & *\\
   0& 0 & 0 & 0 & * & *   
\end{pmatrix}\right\}, \\
\qt&+\bar{\qt}=\qt'+\bar{\qt}'=\left\{ 
\begin{pmatrix}
 * & * & * & * & * & *\\
  * & * & * & * & * & *\\
   0 & * & * & * & * & *\\
    0 & * & * & * & * & *\\
   * & *& * & * & * & *\\
   0& * & 0 & 0 & * & * 
\end{pmatrix}
\right\}.
\end{align*}
 Then $(\gt_0,\qt)$ is not weakly nondegenerate, because $\qt\subsetneqq\qt'\subset\qt+\bar{\qt},$ while
 $(\gt_0,\breve{\qt}_0,\Tilde{\qt}_0)$ is a contact triple which is 
 nondegenerate because $\su(1,5)$ is simple and
 therefore does not contain nontrivial ideals.
\end{exam}

\section{Homogeneous contact structures} \label{hcs}
Let $\Gf_0$ be a Lie group, acting transitively on a smooth manifold $M_0.$ Fix a point $\pnt_0$ 
of $M_0.$ The injective quotient of 
\begin{equation*}
 \piup:\Gf_0\ni{x}\to x\cdot\pnt_0\in{M_0}
\end{equation*}
yields the idenfication 
$M_0\thickapprox\Gf_0/\Hf_0$ of $M_0$ with the quotient of  $\Gf_0$ by the stabilizer $\Hf_0$
of $\pnt_0.$ A $\Gf_0$-equivariant contact structure on $M_0$ is the datum of a constant
rank distribution $\Lcr^*$ on $M_0,$ which is invariant for the left translations
by elements of $\Gf_0:$ 
\begin{equation*}
 x\cdot\Lcr^*=\Lcr^*,\;\;\forall x\in\Gf_0.
\end{equation*}
The pullback 
${\Lcr}=\piup^*(\Lcr^*)$ is a left-invariant distribution on $\Gf_0$ 
generated by a subspace $\lgt_0$
of left-invariant
vector fields on $\Gf_0,$ containing the Lie algebra $\isot_0$ of $\Hf_0.$ 
Moreover, 
the vector subspace 
\begin{equation*} L_{p_0}=\{\Theta_{p_0}\mid \Theta\in\Lcr^*\}\subseteq{T}_{\pnt_0}M_0
\end{equation*}
must be invariant for the differential at $p_0$
of the translations by elements of $\Hf_0.$ This yields 
\begin{equation*}
 \Ad(x)(\lgt_0)=\lgt_0,\;\;\forall x\in\Hf_0,
\end{equation*}
which also implies that $[\isot_0,\lgt_0]\subseteq\lgt_0$ for the Lie algebra $\isot_0$ of $\Hf_0.$  
\par 
Vice versa, il $\lgt_0$ is an $\Ad(\Hf_0)$-invariant linear
subspace of the Lie algebra $\gt_0$ of $\Gf_0,$ 
 then the push-forward on $M_0$ of the distribution on $\Gf_0$ generated by the
left-invariant vector fields  corresponding to the elements of $\lgt_0$ is a smooth distribution 
$\Lcr^*=\lgt_0^*$
on $M_0,$ which is invariant by the \mbox{$\Gf_0$-translations} on $M_0.$ \par 
\smallskip
Assume now that we do not know a priori that $M_0$ is a homogeneous space, but we are given 
a constant rank distribution
$\Lcr^*$ on  $M_0$ and a Lie algebra 
${\gt}_0^*$ 
of smooth vector fields on $M_0$ which leave $\Lcr^*$ invariant: this means that
$[\gt_0^*,\Lcr^*]\subseteq\Lcr^*.$ 
\par We say that ${\gt}_0^*$ is \emph{transitive} at $\pnt_0$ if 
\begin{equation*}
 \{X^*_{p_0}\in{T}_{\pnt_0}M_0\mid {X}^*\in{\gt}_0^*\} = T_{p_0}M_0.
\end{equation*}
\par 
Let $\gt_0={{\gt}_0^*}^{\mathrm{opp}},$
where the superscript\, \textquotedblleft\textit{opp}\textquotedblright\;
 means that, if we denote by $X$ the element of $\gt_0$ corresponding
to the vector field $X^*$ of $\gt_0^*,$ then 
\begin{equation*}
 [X,Y]^*=-[X^*,Y^*], \;\;\forall X,Y\in\gt_0.
\end{equation*}
With $L_{\pnt_0}=\{\Theta_{\pnt_0}\mid \Theta\in\Lcr^*\},$ let us set
\begin{equation*}
 \isot_0=\{X\in\gt_0\mid X^*_{\pnt_0}=0\},\quad\lgt_0=\{X\in\gt_0\mid X^*_{\pnt_0}\in{L}_{\pnt_0}\}.
\end{equation*}
\begin{prop}
 If ${\gt}_0^*$ is transitive, then $(\gt_0,\isot_0,\lgt_0)$ is a transitive contact triple. 
\end{prop}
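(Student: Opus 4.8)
The plan is to transport everything to the abstract Lie algebra $\gt_0=({\gt_0^*})^{\mathrm{opp}}$ through the evaluation map
\[
\varepsilon\colon \gt_0\longrightarrow T_{\pnt_0}M_0,\qquad \varepsilon(X)=X^*_{\pnt_0},
\]
and to read off the defining properties of a transitive contact triple from the geometry of $\Lcr^*$ at $\pnt_0$. First I would record the purely linear facts. Transitivity of $\gt_0^*$ says exactly that $\varepsilon$ is onto, with $\ker\varepsilon=\isot_0$ and $\varepsilon^{-1}(L_{\pnt_0})=\lgt_0$; hence $\gt_0/\isot_0\cong T_{\pnt_0}M_0$ and $\gt_0/\lgt_0\cong T_{\pnt_0}M_0/L_{\pnt_0}$ are finite dimensional, so $\isot_0$ has finite codimension and $\lgt_0$ a finite dimensional complement, while $\isot_0\subseteq\lgt_0$ is clear since $0\in L_{\pnt_0}$. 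As $\varepsilon$ is continuous into a finite dimensional space, $\isot_0$ and $\lgt_0$ are closed; linear compactness is automatic when $\gt_0^*$ is finite dimensional and is otherwise imposed by the standing topology on $\gt_0$. That $\isot_0$ is a subalgebra is immediate, because the bracket of two vector fields vanishing at $\pnt_0$ again vanishes there.

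Next I would verify the two compatibility conditions that turn $(\gt_0,\isot_0,\lgt_0)$ into a triple. For $X\in\isot_0$ the flow $\phi_t$ of $X^*$ fixes $\pnt_0$ and preserves $\Lcr^*$, so $d\phi_t|_{\pnt_0}$ preserves $L_{\pnt_0}$; differentiating $t\mapsto d\phi_{-t}|_{\pnt_0}(Y^*_{\pnt_0})\in L_{\pnt_0}$ at $t=0$ for $Y\in\lgt_0$ yields $[X^*,Y^*]_{\pnt_0}\in L_{\pnt_0}$, that is $[\isot_0,\lgt_0]\subseteq\lgt_0$. To see that $(\gt_0,\isot_0)$ is transitive it remains to check that $\isot_0$ contains no nonzero ideal of $\gt_0$. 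If $\nt$ is such an ideal, it is invariant under the flows of the elements of $\gt_0^*$; its elements vanish at $\pnt_0$, and carrying $\pnt_0$ along these flows (which, by transitivity, sweep out a neighbourhood of $\pnt_0$) shows that they vanish identically near $\pnt_0$, so $\nt=\{0\}$ in the germ algebra.

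The real work is the contact condition, namely that $\lgt_0$ generates $\gt_0$ as a Lie algebra; this is where the hypothesis that $\Lcr^*$ is a (generalised) contact distribution, its iterated commutators spanning $TM_0$, has to enter. I would prove by induction on $k$ that $\varepsilon$ carries the algebraic filtration $\Go_{-k}$ of \S\ref{descat} \emph{onto} the fibre $(\Lcr^*_{(k)})_{\pnt_0}$ of the geometric weak derived flag $\Lcr^*_{(k)}=\Lcr^*_{(k-1)}+[\Lcr^*,\Lcr^*_{(k-1)}]$. Granting this, bracket generation gives $(\Lcr^*_{(N)})_{\pnt_0}=T_{\pnt_0}M_0$ for some $N$, whence $\varepsilon(\Go_{-N})=T_{\pnt_0}M_0$; since $\Go_{-N}\supseteq\lgt_0\supseteq\isot_0=\ker\varepsilon$, this forces $\Go_{-N}=\gt_0$, i.e. $\Go_{-\muup}=\gt_0$, which is precisely the spanning requirement in the definition of a contact pair.

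The inductive step is the main obstacle, because the fields $X^*$ with $X\in\lgt_0$ are tangent to $\Lcr^*$ only at $\pnt_0$ and are not sections of $\Lcr^*$ elsewhere, so one cannot read the bracket of the distribution off their values naïvely. The point to exploit is again invariance: for $X\in\lgt_0$ the flow of $X^*$ preserves $\Lcr^*$, so evaluating $[X^*,Y^*]_{\pnt_0}$ modulo $L_{\pnt_0}$ by the flow formula shows that it depends only on $X^*_{\pnt_0}$ and $Y^*_{\pnt_0}$ (this is also forced by the $\isot_0$-module property of the $\Go_h$ established in \S\ref{descat}). Comparing with genuine local sections $S,T$ of $\Lcr^*$ having the same values at $\pnt_0$, the differences $X^*-S$ and $Y^*-T$ vanish at $\pnt_0$, and a direct computation of the resulting correction terms modulo $L_{\pnt_0}$ shows that they combine to reverse the overall sign, so that $\varepsilon([X,Y])=-[X^*,Y^*]_{\pnt_0}\equiv[S,T]_{\pnt_0}\pmod{L_{\pnt_0}}$. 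Thus the opposite-bracket convention in $\gt_0=({\gt_0^*})^{\mathrm{opp}}$ is exactly what makes the algebraic Levi form of $\gt_0$ agree with the geometric one of $\Lcr^*$; iterating the same comparison for brackets with $\Go_{-(k-1)}$, and using $[\isot_0,\Go_h]\subseteq\Go_h$ together with $[\lgt_0,\Go_{-(k-2)}]\subseteq\Go_{-(k-1)}$ to annihilate the lower-order contributions, completes the induction and hence the proof.
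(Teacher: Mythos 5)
Your proposal is correct, and it is in fact considerably more complete than the paper's own proof, which occupies only a few lines: there the authors verify exactly two of the axioms, namely that $\gt_0/\isot_0$ injects into $T_{\pnt_0}M_0$ (finite codimension) and that $[\isot_0,\lgt_0]\subseteq\lgt_0$, leaving the remaining requirements of Definition~\ref{def1.1} (closedness, linear compactness, the spanning condition, absence of nontrivial ideals in $\isot_0$) implicit in the setting. On the one substantive point you share, your routes differ: the paper argues purely infinitesimally, correcting $X^*$ by a field $\Theta$ vanishing at $\pnt_0$ so that $X^*+\Theta$ is a section of $\Lcr^*$, and then using $[\gt_0^*,\Lcr^*]\subseteq\Lcr^*$ together with $[Y^*,\Theta]_{\pnt_0}=0$; you instead integrate and differentiate the curve $t\mapsto d\phi_{-t}|_{\pnt_0}(Y^*_{\pnt_0})$ inside $L_{\pnt_0}$. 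Both are sound, the paper's being marginally more elementary since it never invokes flows. What your argument buys beyond the paper is the treatment of the ideal condition and, above all, of the contact (bracket-generating) condition: your sign computation is right --- writing $X^*=S+\Theta$, $Y^*=T+\Xi$ with $S,T$ sections of $\Lcr^*$ and $\Theta,\Xi$ vanishing at $\pnt_0$, invariance makes each of $[S,\Xi]_{\pnt_0}$ and $[\Theta,T]_{\pnt_0}$ congruent to $-[S,T]_{\pnt_0}$ modulo $L_{\pnt_0}$, so that $[X^*,Y^*]_{\pnt_0}\equiv-[S,T]_{\pnt_0}$ and the opposite-bracket convention restores the sign --- and with it your induction $\varepsilon(\Go_{-k})=(\Lcr^*_{(k)})_{\pnt_0}$ legitimately reduces the spanning axiom to bracket generation of $\Lcr^*$, making explicit a hypothesis that the paper uses silently.

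Two caveats, neither of which I would count as a gap given that the paper proves strictly less. First, your ideal argument (flow-invariance of $\nt$, hence vanishing of its elements near $\pnt_0$) is airtight when $\nt$ is finite dimensional, where flow-invariance follows from a linear ODE in $\nt$; in the linearly compact infinite-dimensional case the push-forward of $\nt^*$ along flows needs justification, and your hedge that $\nt=\{0\}$ holds \emph{in the germ algebra} is indeed the correct formulation. Second, in your inductive step the work is done by the same two-correction sign computation (with $T$ now a local section of $\Lcr^*_{(k-1)}$, which $\gt_0^*$ preserves by the Jacobi identity), not by the filtration inclusions $[\isot_0,\Go_h]\subseteq\Go_h$ you cite; as written that sentence is only a sketch, but it fills in correctly.
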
 
\begin{proof}
The quotient $\gt_0/\isot_0$ maps injectively into $T_{\pnt_0}M_0$ and therefore is finite dimensional. 
Let $X\in\lgt_0$  and $Y\in\isot_0.$ Then we can find a vector field $\Theta,$
vanishing at $\pnt_0,$ such that $X^*+\Theta\in\lgt^*_0.$ Then 
\begin{align*}
 [Y^*,X^*+\Theta]=[X,Y]^*+[Y^*,\Theta]\in\lgt_0^*.
\end{align*}
Since $[Y^*,\Theta]$ vanishes at $\pnt_0,$ this means that $[X,Y]^*_{\pnt_0}\in{L}_{p_0},$ showing
that $[Y,X]\in\lgt_0.$ 
\end{proof}

\section{$\Z$-graded Lie algebras and a Tanaka's
theorem} \label{secgrad}
We will use the following  criterion  (\cite[\S{11}]{Tan70}):
 \begin{prop}[N.Tanaka] Let 
\begin{equation}
 \G={\bigoplus}_{h\geq{-\muup}}\G_h
\end{equation}
be a $\Z$-graded real Lie algebra, with $\dim_{\R}(\G_h)<+\infty$ for all $h\in\Z,$
having finitely many summands with negative index. Assume that $\G$ is transitive: this means that 
\begin{equation*}
 \{\etaup\in\G_h\mid [\etaup,\G_{-1}]=\{0\}\}=\{0\},\;\;\;\text{if $h\geq{0}.$}
\end{equation*}
Then a necessary and sufficient condition for $\G$ to be  finite dimensional is that 
\begin{equation}
 \G'=\{\etaup\in\G\mid [\etaup,\G_h]=\{0\}\;\text{for}\; h\leq{-2}\}
\end{equation}
 be finite dimensional.\qed
 \end{prop}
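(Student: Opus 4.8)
The necessity is immediate, since $\G'$ is a linear subspace of $\G$. For the converse I would show that finiteness of $\G'$ forces $\G_p=0$ for all large $p$. Because there are only finitely many negative summands and each $\G_h$ is finite dimensional, $\G$ is finite dimensional if and only if $\G_p=0$ for $p$ large. The first reduction is transitivity itself: for $p\ge 0$ the map $\G_p\ni X\mapsto \ad(X)|_{\G_{-1}}\in\mathrm{Hom}(\G_{-1},\G_{p-1})$ is injective, so $\G_{p-1}=0$ implies $\G_p=0$. Hence it suffices to produce a single large $p_0$ with $\G_{p_0}=0$, and in particular $\G_0$ embeds into $\gl(\G_{-1})$.

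Next I would build the symbol. Iterating the transitive bracket gives, for $p\ge 0$, a linear map $\Psi_p\colon\G_p\to\mathrm{Hom}(\G_{-1}^{\otimes(p+1)},\G_{-1})$, $X\mapsto\big((v_1,\dots,v_{p+1})\mapsto[X,v_1,\dots,v_{p+1}]\big)$. This $\Psi_p$ is injective: if every $(p+1)$-fold bracket vanishes, then each $p$-fold bracket lies in $\G_0$ and commutes with $\G_{-1}$, hence vanishes by transitivity; peeling off one factor at a time and invoking transitivity in each nonnegative degree forces $X=0$.

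The point of $\G'$ is to isolate the symmetric part of this symbol. A short Jacobi computation shows that $\G'$ is a graded subalgebra with $[\G',\G_{-1}]\subseteq\G'$, and that for $X\in\G'$ the symmetry defect of $\Psi_p(X)$ in adjacent slots is $\ad([v_i,v_{i+1}])$ applied to an intermediate bracket, with $[v_i,v_{i+1}]\in\G_{-2}$; since $\G'$ kills $\G_{-2}$ this defect vanishes. Thus $\Psi_p(X)$ is totally symmetric for $X\in\G'$, and freezing all but one argument places $\G'_p$ inside the classical Spencer prolongation $\G_0^{(p)}$ of the linear Lie algebra $\G_0\subseteq\gl(\G_{-1})$. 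Dually, bracketing with the deeper strata gives an injection $\G_p/\G'_p\hookrightarrow\bigoplus_{k=2}^{\muup}\mathrm{Hom}(\G_{-k},\G_{p-k})$, whose kernel is exactly $\G'_p$, so that the non-symmetric corrections to the symbol are expressed through the finitely many finite-dimensional spaces $\G_{-k}$.

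The heart of the argument, and the step I expect to be the main obstacle, is to convert these two injections into an actual vanishing. Dimension counting alone fails: together they only yield $\dim\G_p\le\dim\G'_p+\sum_{k\ge 2}\dim\G_{-k}\cdot\dim\G_{p-k}$, a recursion that permits exponential growth, so one must exploit the \emph{Jacobi constraints} coupling the symbols in neighbouring degrees rather than their dimensions. I would run an induction on the depth $\muup$: the top stratum $\G_{-\muup}$ is central in the negative part, so passing to the centralizer of $\G_{-\muup}$ (respectively to the quotient modulo it) yields a transitive graded algebra of smaller depth whose associated deep-stratum centralizer is a subquotient of $\G'$, hence finite; the corrections from $\G_{-\muup},\dots,\G_{-2}$ are absorbed at each inductive stage. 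In the base situation the symbol is totally symmetric and the classical finiteness of the prolongation of a linear Lie algebra of finite type supplies the decay, giving $\G_{p_0}=0$ for some large $p_0$. Ensuring that these finite-dimensional corrections genuinely terminate, rather than accumulate across the inductive steps, is where the real work lies.
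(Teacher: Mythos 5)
The paper offers no proof of this proposition: it is quoted as Tanaka's criterion with the citation \cite[\S 11]{Tan70} and a terminal qed, and the paper only establishes the auxiliary facts about $\G'$ in Lemma~\ref{lem3.2}. So your attempt must stand on its own. Its preparatory part does: necessity is indeed trivial; transitivity gives $\G_{p-1}=0\Rightarrow\G_p=0$, so one vanishing degree suffices; the iterated symbol maps are injective; the total symmetry of the symbol of an element of $\G'$ (the transposition defect is a bracket against $[v_i,v_{i+1}]\in\G_{-2}$, killed because $[\G',\G_{-1}]\subseteq\G'$) is exactly \eqref{eq3.9c} of Lemma~\ref{lem3.2}; and the kernel of $\G_p\to\bigoplus_{k\geq 2}\mathrm{Hom}(\G_{-k},\G_{p-k})$ is $\G'_p$ by definition.

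The gap is the step you yourself flag, and it is genuine: the depth induction, made precise, does not close. Set $\mathfrak{c}_p=\{\etaup\in\G_p\mid [\etaup,\G_{-\muup}]=\{0\}\}$ for $p\geq 0$; then $\mathfrak{c}=\bigoplus_{h<0}\G_h\oplus\bigoplus_{p\geq 0}\mathfrak{c}_p$ is a graded subalgebra in which $\G_{-\muup}$ is a central ideal, the quotient $\mathfrak{c}/\G_{-\muup}$ is transitive of depth $\muup-1$, and its deep-stratum centralizer in nonnegative degrees is exactly $\bigoplus_{p\geq 0}\G'_p$, hence finite dimensional --- so the inductive hypothesis legitimately applies and yields $\mathfrak{c}_p=0$ for large $p$. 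But the only consequence for $\G$ itself is that $\ad\colon\G_p\to\mathrm{Hom}(\G_{-\muup},\G_{p-\muup})$ is injective for large $p$, i.e.\ $\dim\G_p\leq\dim\G_{-\muup}\cdot\dim\G_{p-\muup}$, which is precisely the recursion you correctly rejected at the outset as compatible with exponential growth; your phrase ``the corrections are absorbed at each inductive stage'' is an assertion, not an argument, and nothing in the outline extracts an actual vanishing from the Jacobi constraints. Note also that the base case is misstated: for $\muup=1$ one has $\G'=\G$ and the proposition is a tautology, so classical finite-type/Spencer theory is not what closes the induction --- in Tanaka's treatment it enters in the different statement identifying $\G'$ with the classical prolongation of $\G'_0|_{\G_{-1}}\subseteq\gl_{\R}(\G_{-1})$, while the finiteness criterion itself requires the separate, genuinely laborious inductive interlacing of the maps $\G_p\to\mathrm{Hom}(\G_{-k},\G_{p-k})$ carried out in \cite[\S 11]{Tan70}. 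As written, your proposal is a correct set-up plus a placeholder exactly where the proof has to happen.
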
\label{prop1.7g}
 Let us comment on 
 this criterion. 
 In the following, we assume that $\G$ is transitive. \par 
 Clearly, $\G'$ is a $\Z$-graded Lie subalgebra of $\G$ and 
\begin{equation}
 [\G'_h,\G_{-1}]\subseteq\G'_{h-1},\;\;\forall h\in\Z .
\end{equation}
Given real vector spaces $V,W,$ let 
 $\Mf^h(V,W)$ denote the space of $W$-valued $h$-multilinear forms on $V$ 
and $\Symm^h(V,W)$ the subspace consisting of those which are symmetric. \par 
We define a map $\etaup\to\etaup_h$ of $\G$ into $\Mf^h(\G_{-1},\G)$ by associating to each
$\etaup\in\G$  the multilinear form 
\begin{equation*}
 \etaup_h(\xiup_1,\hdots,\xiup_h)=[\etaup,\xiup_1,\hdots,\xiup_h],\;\; \text{for}\;\; 
 \xiup_1,\hdots,\xiup_h\in\G_{-1}.
\end{equation*}
We also consider the 
 alternate $\G_{-2}$-valued bilinear form on $\G_{-1}:$ 
\begin{equation}\label{eq3.6e}
 \omegaup(\xiup_1,\xiup_2)=[\xiup_1,\xiup_2],\;\; \text{for}\;\; \xiup_1,\xiup_2\in\G_{-1}
\end{equation}
and set
 \begin{equation}
 \spt(\omegaup)=\{T\in\gl_{\R}(\G_{-1})\mid \omegaup(T(\xiup_1),\xiup_2)+\omegaup(\xiup_1,T(\xiup_2))=0\}.
\end{equation} 

\begin{lem}\label{lem3.2}
 For each $h\geq{0},$ the maps 
\begin{equation} \label{eq3.7c}
\begin{cases}
\G_0\ni \etaup \longrightarrow \etaup_{1}\in\gl_{\R}(\G_{-1}), &\text{for $h=0,$}\\
 \G_h\ni\etaup\longrightarrow \etaup_h\in\Mf^h(\G_{-1},\G_0), &\text{for $h>0,$} 
 \end{cases}
\end{equation}
are injective. 
\begin{equation}\label{eq3.9c}
\begin{cases}
 \G_0' 
 \subseteq \{\etaup\in\G_0\mid \etaup_{1}\in\spt(\omegaup)\}, &
 \text{for $h=0,$}
 \\
\G'_h\subseteq
 \{\etaup\in\G_h\mid \etaup_k\in\Symm^{k}(\G_{-1},\G_{h-k})\}, &
 \text{for $h,{k}>{0}.$}
\end{cases}
\end{equation}
\end{lem}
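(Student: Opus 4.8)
The plan is to treat the two assertions of \eqref{eq3.7c} and \eqref{eq3.9c} separately, extracting injectivity from transitivity and the inclusions from the Jacobi identity together with the defining property of $\G'$.

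For the injectivity in \eqref{eq3.7c} I would argue as follows. When $h=0$ the map $\etaup_1$ sends $\xiup\mapsto[\etaup,\xiup]\in\G_{-1}$, so $\etaup_1=0$ is literally the statement $[\etaup,\G_{-1}]=\{0\}$, and transitivity forces $\etaup=0$. When $h>0$ I would use a descending iteration: if $\etaup_h=0$, then for each fixed $\xiup_1,\dots,\xiup_{h-1}\in\G_{-1}$ the element $[\etaup,\xiup_1,\dots,\xiup_{h-1}]\in\G_1$ brackets trivially with all of $\G_{-1}$, hence vanishes by transitivity (which applies since $1\geq 0$); this says $\etaup_{h-1}=0$. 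Repeating lowers the degree one step at a time through $\G_2,\G_3,\dots$, all of nonnegative index, until $\etaup_1=0$, whence $\etaup=0$ as in the case $h=0$.

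For the inclusions in \eqref{eq3.9c} the crucial observation is that, by the very definition of $\G'$, every element of $\G'$ annihilates $\G_m$ for $m\leq -2$, and in particular kills $\G_{-2}$. In the case $h=0$, for $\etaup\in\G_0'$ and $\xiup_1,\xiup_2\in\G_{-1}$ the Jacobi identity gives $[\etaup,[\xiup_1,\xiup_2]]=[[\etaup,\xiup_1],\xiup_2]+[\xiup_1,[\etaup,\xiup_2]]$; the left-hand side vanishes because $[\xiup_1,\xiup_2]\in\G_{-2}$, while the right-hand side is exactly $\omegaup(\etaup_1\xiup_1,\xiup_2)+\omegaup(\xiup_1,\etaup_1\xiup_2)$, so $\etaup_1\in\spt(\omegaup)$. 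For the case $h,k>0$ I would verify symmetry of $\etaup_k$ on adjacent transpositions only, since these generate the full symmetric group: setting $\zetaup=[\etaup,\xiup_1,\dots,\xiup_{i-1}]$, the already noted relation $[\G'_h,\G_{-1}]\subseteq\G'_{h-1}$ shows $\zetaup\in\G'$, so $[\zetaup,[\xiup_i,\xiup_{i+1}]]=0$ because $[\xiup_i,\xiup_{i+1}]\in\G_{-2}$; Jacobi then yields $[[\zetaup,\xiup_i],\xiup_{i+1}]=[[\zetaup,\xiup_{i+1}],\xiup_i]$, i.e. invariance of $\etaup_k$ under swapping $\xiup_i$ and $\xiup_{i+1}$, hence $\etaup_k\in\Symm^{k}(\G_{-1},\G_{h-k})$.

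The main obstacle I anticipate is bookkeeping rather than anything conceptual: one must track the grading so that each iterated bracket lands in the intended summand, and — for the symmetry argument — guarantee that the partially contracted elements $[\etaup,\xiup_1,\dots,\xiup_{i-1}]$ remain inside $\G'$ for every intermediate position $i$, so that their bracket with $\G_{-2}$ still vanishes. This stability under bracketing with $\G_{-1}$ is precisely what the relation $[\G'_h,\G_{-1}]\subseteq\G'_{h-1}$ supplies, and it is the one place where the structure of $\G'$, rather than bare transitivity, is genuinely needed.
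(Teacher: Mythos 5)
Your proof is correct and follows essentially the same route as the paper's: injectivity is extracted from transitivity, and the symmetry of $\etaup_k$ comes from the Jacobi identity, the vanishing of brackets of $\G'$ with $\G_{-2},$ and the stability relation $[\G'_h,\G_{-1}]\subseteq\G'_{h-1},$ reducing everything to adjacent transpositions. The only cosmetic difference is that you handle all adjacent transpositions uniformly via $\zetaup=[\etaup,\xiup_1,\hdots,\xiup_{i-1}]\in\G',$ whereas the paper first works out the cases $k=2,3$ explicitly (using annihilation of $\G_{-3}$ for $k=3$) before running the same recursion that you use.
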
 
\begin{proof}
 The fact that the maps in \eqref{eq3.7c} are injective is a straightforward consequence of transitivity. \par 
If $\etaup\in\G_0$  and  $[\etaup,\G_{-2}]=0,$ then 
\begin{equation*}
 0=[\etaup,[\xiup_1,\xiup_2]]=[[\etaup,\xiup_1],\xiup_2]+[\xiup_1,[\etaup,\xiup_2]]=\omegaup(\etaup_{1}(\xiup_1),
 \etaup_2)+\omegaup(\xiup_1,\etaup_1(\xiup_2))
\end{equation*}
shows that  $\etaup_1\in\spt(\omegaup).$ 
In the same way, if $\etaup\in\G$ and $[\etaup,\G_{-2}]=0,$ 
 we obtain that 
\begin{equation*}
 [\etaup,\xiup_1,\xiup_2]=[[\etaup,\xiup_1],\xiup_2]=[[\etaup,\xiup_2],\xiup_1]=[\etaup,\xiup_2,\xiup_1],\;
 \forall \etaup\in\G',\; \forall \xiup_1,\xiup_2\in\G_{-1}.
\end{equation*}
This gives at once that $[\etaup,\xiup_1,\xiup_2,\xiup_3]=[\etaup,\xiup_2,\xiup_1,\xiup_3]$ for
$\etaup\in\G'$ and $\xiup_1,\xiup_2,\xiup_3\in\G_{-1},$ while 
\begin{align*}
0= [\etaup,[\xiup_1,\xiup_2,\xiup_3]]=[\etaup,[[\xiup_1,\xiup_2],\xiup_3]]=[[\etaup,[\xiup_1,\xiup_2]],\xiup_3]
 +[[\xiup_1,\xiup_2],[\etaup,\xiup_3]]\\
 = [[\xiup_1,[\etaup,\xiup_3]],\xiup_2]+[\xiup_1,[\xiup_2,[\etaup,\xiup_3]]]=-[\etaup,\xiup_3,\xiup_1,\xiup_2]
 +[\etaup,\xiup_3,\xiup_2,\xiup_1]
\end{align*}
shows that also 
$[\etaup,\xiup_1,\xiup_2,\xiup_3]=[\etaup,\xiup_1,\xiup_3,\xiup_2]$ for
$\etaup\in\G'$ and $\xiup_1,\xiup_2,\xiup_3\in\G_{-1}.$
This yields symmetry on the triples. Arguing recursively on $k,$ we obtain, for all ${k}>3,$  
\begin{align*}
0&=[[\etaup,\xiup_1,\hdots,\xiup_{k-2}],[\xiup_{k-1},\xiup_k]]\\
& =[\etaup,\xiup_1,\hdots,\xiup_{k-1},\xiup_k]-
[\etaup,\xiup_1,\hdots,\xiup_{k},\xiup_{k-1}].
\end{align*}
This shows that $[\etaup,\xiup_1,\hdots,\xiup_{k-1},\xiup_k]=
[\etaup,\xiup_1,\hdots,\xiup_{k},\xiup_{k-1}].$ Thus $\etaup_k$ stays invariant under the transposition
$(k-1,k).$ By the recursive assumption, it is also invariant under the transpositions $(j-1,j)$ for
$2\leq{j}<k$ and thus is invariant under the full permutation group of $\{1,\hdots,k\}.$ 
\end{proof}
\begin{esemp}
 Let $V$ be a real vector space of finite dimension $n,$ viewed as a degree $(-1)$-homogeneous
  Abelian real Lie algebra. Then its maximal
 Levi-Tanaka extension is isomorphic to 
 the $\Z$-graded 
 Lie algebra $\Pp$ of vector fields with polynomial coefficients in $\R^n,$ the grading being defined by
\begin{equation}
 \Pp_h=\left.\left\{{\sum}_{i=1}^n p_i(x)\frac{\partial}{\partial{x}_i}\right| p_i\in\R_{h+1}[x_1,\hdots,x_n]
 \right\},\;\;\text{$h\geq{-1}.$}
\end{equation}
Here $\R_{h+1}[x_1,\hdots,x_n]$ denotes the vector space of homogeneous polynomials of degree $(h+1)$
in the $x_1,\hdots,x_n$ variables. \par
If $n=2m$ and $V$ has a complex structure, then the Levi-Tanaka extension of $V\oplus\gl_{\C}(V)$ 
is isomorphic to the $\Z$-graded complex Lie algebra $\Pp^{\C},$ of homolorphic complex vector fields
with holomorphic polynomial coefficients, with the gradation defined by
\begin{equation}
  \Pp_h^{\C}=\left.\left\{{\sum}_{i=1}^m p_i(z)\frac{\partial}{\partial{z}_i}\right| p_i\in\C_{h+1}[z_1,\hdots,z_n]
 \right\},\;\;\text{$h\geq{-1}.$}
\end{equation}
 Here $\C_{h+1}[z_1,\hdots,z_n]$ denotes the vector space of homogeneous 
 holomorphic polynomials of degree $(h+1)$
in the $z_1,\hdots,z_n$ variables. 
\end{esemp} 
\begin{proof}
 The fact that $\Pp$ is a maximal transitive extension of $\Pp_{-1}\simeq{V}$ 
is a consequence of the fact that
 $[\Pp_h,\Pp_{-1}]=\Pp_{h-1}$ for $h\geq{0}.$ \par 
 Analogously, when $V$ has a complex structure,  
$\Pp^{\C}$ is a maximal transitive extensions of  $V\oplus\gl_{\C}(V)\simeq\Pp^{\C}_{-1}\oplus\Pp^{\C}_0$
because
$[\Pp_h^{\C},\Pp_{-1}^{\C}]=\Pp_{h-1}^{\C}$ for all $h\geq{0}.$ 
\end{proof} 

\section{A finitness criterion for $CR$ algebras}\label{finitecr}
We recall that the contact triple associated to  a fundamental $CR$ algebra $(\gt_0,\qt)$  is
$$ (\gt_0,\breve{\qt}_0,\Tilde{\qt}_0)=
(\gt_0,\qt\cap\bar{\qt}\cap\gt_0,(\qt+\bar{\qt})\cap\gt_0).$$
We consider the canonical filtration \eqref{eq2.2a} 
of the contact pair $(\gt_0,\Tilde{\qt}_0)$
and the corresponding $\Z$-graded Lie algebra
\begin{equation}
 \G={\bigoplus}_{h\in\Z}\G_h,\;\;\text{with}\; \; \G_h=\Go_h/\Go_{h+1}.
\end{equation}
Denote by
$\piup_h:\Go_h\to\G_h=\Go_h/\Go_{h+1}$  the projections onto the quotients.
\begin{lem}[Partial complex structure]
There is a unique complex structure $J$ on $\G_{-1},$ defined by 
\begin{equation}
\Jd(\piup_{-1}(X))=\piup_{-1}(Y)\;\; \text{iff \, $X+iY\in\qt.$}\end{equation}
The operator $\Jd\in\gl_{\R}(\G_{-1})$ satisfies 
\begin{equation}\label{eq4.5a}
 [\Jd(\xiup_1),\Jd(\xiup_2)]=[\xiup_1,\xiup_2],\;\; [\Jd(\xiup_1),\xiup_2]+[\xiup_1,\Jd(\xiup_2)]=0,\;\;
 \forall \xiup_1,\xiup_2\in\G_{-1}.
\end{equation}
\end{lem}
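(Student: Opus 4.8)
The plan is to carry out the whole argument in the complexification $\gt$, exploiting three structural facts: that $\Tilde{\qt}=\qt+\bar{\qt}$ is stable under conjugation, hence equals the complexification $\Go_{-1}\otimes\C$ of $\Go_{-1}=\Tilde{\qt}_0$; that $\qt$ and $\bar{\qt}$ are complex Lie subalgebras; and that, complexifying the defining relation $\Go_0=\{X\in\Go_{-1}\mid[X,\Go_{-1}]\subseteq\Go_{-1}\}$, one gets
\[
 \Go_0\otimes\C=\{Z\in\Tilde{\qt}\mid [Z,\Tilde{\qt}]\subseteq\Tilde{\qt}\}.
\]

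First I would settle existence of the value of $\Jd$. Given $X\in\Go_{-1}=\Tilde{\qt}_0$, write $X=A+B$ with $A\in\qt$, $B\in\bar{\qt}$; averaging this with its conjugate and using $X=\bar{X}$ yields a decomposition $X=A'+\overline{A'}$ with $A'\in\qt$. Then $Y:=2\,\im(A')=-i(A'-\overline{A'})$ lies in $\Tilde{\qt}_0$ and satisfies $X+iY=2A'\in\qt$. Hence for every $X$ there is at least one admissible $Y$, giving a candidate value $\Jd(\piup_{-1}(X)):=\piup_{-1}(Y)$. Moreover $\Jd^2=-\id$ is then immediate: multiplying $X+iY\in\qt$ by $-i$ gives $Y-iX\in\qt$, so $\Jd(\piup_{-1}(Y))=-\piup_{-1}(X)$.

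The main obstacle is well-definedness of $\piup_{-1}(Y)$, both in the choice of $Y$ and in the representative $X$; both reduce to the single claim that if $U\in\Go_0$ and $V\in\Tilde{\qt}_0$ satisfy $U+iV\in\qt$, then $V\in\Go_0$. The naive verification of $[V,\Tilde{\qt}]\subseteq\Tilde{\qt}$ founders on the uncontrolled cross bracket $[\qt,\bar{\qt}]$, and the crux of the proof is to circumvent this by using two complementary representations of $V$. From $iV=(U+iV)-U$, with $U+iV\in\qt$ and $U\in\Go_0\otimes\C$, one finds $[V,\qt]\subseteq[\qt,\qt]+[\Go_0\otimes\C,\Tilde{\qt}]\subseteq\Tilde{\qt}$; from $-iV=(U-iV)-U$, noting that $U-iV=\overline{U+iV}\in\bar{\qt}$, one finds $[V,\bar{\qt}]\subseteq[\bar{\qt},\bar{\qt}]+[\Go_0\otimes\C,\Tilde{\qt}]\subseteq\Tilde{\qt}$. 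Together these give $[V,\Tilde{\qt}]\subseteq\Tilde{\qt}$, that is $V\in\Go_0$. This proves that $\Jd$ is a well-defined, and plainly $\R$-linear, complex structure on $\G_{-1}$, its uniqueness being forced by the defining relation.

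Finally, for the identities \eqref{eq4.5a} I would lift $\xiup_1,\xiup_2\in\G_{-1}$ to $X_1,X_2\in\Go_{-1}$ and choose $Y_1,Y_2$ with $Z_j:=X_j+iY_j\in\qt$, so that $\Jd\xiup_j=\piup_{-1}(Y_j)$. Because $\qt$ is a subalgebra and $\qt\subseteq\Tilde{\qt}=\Go_{-1}\otimes\C$, we have $[Z_1,Z_2]\in\qt\subseteq\Go_{-1}\otimes\C$, whence its image under the complexified projection $\Go_{-2}\otimes\C\to\G_{-2}\otimes\C$ is zero. Expanding $[Z_1,Z_2]$ into real and imaginary parts and using $[\piup_{-1}(X_1),\piup_{-1}(X_2)]=\piup_{-2}([X_1,X_2])$, the vanishing of the real part reads $[\xiup_1,\xiup_2]=[\Jd\xiup_1,\Jd\xiup_2]$ and that of the imaginary part reads $[\Jd\xiup_1,\xiup_2]+[\xiup_1,\Jd\xiup_2]=0$. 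Thus both relations of \eqref{eq4.5a} follow from the integrability $[\qt,\qt]\subseteq\qt$ together with the inclusion $\qt\subseteq\Go_{-1}\otimes\C$.
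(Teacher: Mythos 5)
Your proof is correct, and its skeleton matches the paper's: both reduce well-definedness to the single claim that if $U\in\Go_0$, $V\in\Tilde{\qt}_0$ and $U+iV\in\qt$, then $V\in\Go_0$, and both obtain \eqref{eq4.5a} by expanding a bracket of two elements of $\qt$ into real and imaginary parts and projecting to $\G_{-2}$. Where you differ is in the proof of that claim. The paper argues elementwise: for an arbitrary $X'\in\Go_{-1}$ it picks $Y'$ with $X'+iY'\in\qt$, expands $[U+iV,X'+iY']\in\qt$, and reads off $[V,X']\in\Go_{-1}$ and $[V,Y']\in\Go_{-1}$ from the real and imaginary parts, giving $[V,\Go_{-1}]\subseteq\Go_{-1}$ directly. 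You argue at the level of complex subspaces, using the two representations $iV=(U+iV)-U$ and $-iV=\overline{U+iV}-U$ to control $[V,\qt]$ and $[V,\bar{\qt}]$ separately. This is an equivalent computation in slicker packaging; in fact your second inclusion is not even needed as a separate step, since $V$ is real and $\Tilde{\qt}$ is conjugation-stable, so $[V,\bar{\qt}]=\overline{[V,\qt]}\subseteq\Tilde{\qt}$ follows from the first by conjugation, and the subalgebra property of $\bar{\qt}$ plays no essential role. What your write-up adds over the paper is explicitness on two points the paper leaves tacit: the existence of an admissible $Y$ for each $X\in\Go_{-1}$ (your averaging decomposition $X=A'+\overline{A'}$ with $A'\in\qt$), and the verification that $\Jd^2=-\id$, so that $\Jd$ is genuinely a complex structure.
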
 
\begin{proof}
To show that $\Jd$ is well defined, we need to verify that, if $X,Y\in\gt_0,$  $X\in\Go_0$ and
$X+\iq{Y}\in\qt,$ then $Y\in\Go_0.$ If $X'\in\Go_{-1},$ then we can find $Y'\in\Go_{-1}$ such that
$X'+\iq{Y}'\in\qt.$ Then 
\begin{align}\label{eq4.6}
 [X+\iq{Y},X'+\iq{Y}']=[X,X']-[Y,Y']+\iq([X,Y']+[Y,X'])\in\qt.
\end{align}
Since by assumption both $[X,X']$ and $[X,Y']$ belong to $\Go_{-1},$ then also $[Y,X']$
and $[Y,Y']$ belong to
$\Go_{-1}.$ This shows that $[Y,\Go_{-1}]\subset\Go_{-1},$ and then $Y\in\Go_0.$ 
Formula \eqref{eq4.6} holds in general for $X+\iq{Y},X'+\iq{Y'}\in\qt,$ yielding \eqref{eq4.5a}. 
\end{proof}
\begin{lem}
 The $\G_{-2}$-valued 
 form \eqref{eq3.6e} is nondegenerate, i.e. 
\begin{equation*}\vspace{-19pt}
 \xiup\in\G_{-1},\; \; \omegaup(\xiup,\xiup')=0,\;\forall \xiup'\in\G_{-1} \Longleftrightarrow
 \xiup=0.
\end{equation*}\qed
\end{lem}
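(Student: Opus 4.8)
The plan is to identify the radical of $\omegaup$ with the image of $\Go_0$ under $\piup_{-1}$, which is trivial by the very construction of the grading. First I would record that $\omegaup$ is well defined as a map $\G_{-1}\times\G_{-1}\to\G_{-2}$. Indeed, the recursion \eqref{eq2.2f} gives $\Go_{-2}=\Go_{-1}+[\Go_{-1},\Go_{-1}]$, so the bracket of two lifts of elements of $\G_{-1}$ lands in $\Go_{-2}$, and its class in $\G_{-2}=\Go_{-2}/\Go_{-1}$ is independent of the chosen representatives: since $\G_{-1}=\Go_{-1}/\Go_0$, a representative is determined up to $\Go_0$, and $[\Go_0,\Go_{-1}]\subseteq\Go_{-1}$ by the $h=0$ clause of \eqref{eq2.2f}, so replacing $X$ by $X+Z$ with $Z\in\Go_0$ does not change $\piup_{-2}([X,X'])$.

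Then I would compute the radical directly. Writing $\xiup=\piup_{-1}(X)$ with $X\in\Go_{-1}$, one has $\omegaup(\xiup,\piup_{-1}(X'))=\piup_{-2}([X,X'])$, which vanishes precisely when $[X,X']\in\Go_{-1}$. Hence $\xiup$ lies in the radical of $\omegaup$ if and only if $[X,\Go_{-1}]\subseteq\Go_{-1}$. But this is exactly the condition defining $\Go_0$ in the $h=0$ case of \eqref{eq2.2f}. Therefore the radical of $\omegaup$ equals $\piup_{-1}(\Go_0)$, and since $\Go_0=\ker\piup_{-1}$ this is $\{0\}$, which is the asserted nondegeneracy.

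The argument is essentially a direct unwinding of the definitions, so there is no serious obstacle: the grading is arranged precisely so that $\Go_0$ is the set of elements of $\Go_{-1}$ whose bracket with $\Go_{-1}$ stays inside $\Go_{-1}$, which is verbatim the radical of the induced bracket form. The only points requiring a moment of care are the well-definedness of $\omegaup$ on the quotients and the recognition that the defining condition for $\Go_0$ coincides exactly with membership in the radical. As a consistency check one sees that nondegeneracy forces $\G_{-2}\neq\{0\}$ whenever $\G_{-1}\neq\{0\}$: any $X\in\Go_{-1}\setminus\Go_0$ yields some $X'\in\Go_{-1}$ with $[X,X']\in\Go_{-2}\setminus\Go_{-1}$, so $\omegaup$ genuinely takes nonzero values.
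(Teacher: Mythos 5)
Your proof is correct and is precisely the unwinding of definitions that the paper leaves implicit (the lemma is stated with an immediate \qed and no written proof): after checking well-definedness of $\omegaup$ on the quotients, the radical is identified with $\piup_{-1}\left(\{X\in\Go_{-1}\mid [X,\Go_{-1}]\subseteq\Go_{-1}\}\right)=\piup_{-1}(\Go_0)$, which is trivial because $\Go_0=\ker\piup_{-1}$ by the $h=0$ clause of \eqref{eq2.2f}. This matches the intended argument, so there is nothing to add.
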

\begin{lem} Let $(\gt_0,\qt)$ be a $CR$ algebra and \eqref{eq2.2a} the associated $\Z$-filtration.
assume that there is a   nonnegative integer $k$ such that $\Go_k\subset\isot_0=\qt\cap\bar{\qt}\cap\gt_0.$ 
Then $\G'_{2k+1}=\{0\}.$ 
\end{lem}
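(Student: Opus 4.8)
The plan is to fix $\etaup\in\G'_{2k+1}$ and prove $\etaup=0$. First I would lift $\etaup$ to an element $E\in\Go_{2k+1}$; since $2k+1\ge k$, the filtration gives $E\in\Go_k\subseteq\isot_0=\qt\cap\bar{\qt}\cap\gt_0$, so that $E$ lies simultaneously in $\qt$ and in $\bar{\qt}$. The same depth bound shows more generally that every $\Go_j^{\C}$ with $j\ge k$ is contained in $(\qt\cap\bar{\qt})$, a fact I will use to control iterated brackets. By the injectivity in Lemma~\ref{lem3.2} it suffices to prove $\etaup_{2k+1}=0$, and by transitivity (faithfulness of $\G_0\hookrightarrow\gl_{\R}(\G_{-1})$) this is in turn equivalent to $\etaup_{2k+2}=0$, where $\etaup_{2k+2}(\xiup_1,\dots,\xiup_{2k+2})=[\etaup,\xiup_1,\dots,\xiup_{2k+2}]\in\G_{-1}$ is the symmetric $(2k+2)$-form of Lemma~\ref{lem3.2}.

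Then I would test $\etaup_{2k+2}$ against the Levi form by setting
\[
G(\xiup_1,\dots,\xiup_{2k+3})=\omegaup\bigl(\etaup_{2k+2}(\xiup_1,\dots,\xiup_{2k+2}),\,\xiup_{2k+3}\bigr)\in\G_{-2}.
\]
This $G$ is fully symmetric: it is symmetric in the first $2k+2$ slots by Lemma~\ref{lem3.2}, and symmetric in the last two because $[\etaup,\xiup_1,\dots,\xiup_{2k+1}]$ lies in $\G'_0$ (using $[\G'_h,\G_{-1}]\subseteq\G'_{h-1}$), so that its symbol belongs to $\spt(\omegaup)$ and $\omegaup$ is alternating. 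Since the Levi form $\omegaup$ is nondegenerate (established above), $G\equiv0$ forces $\etaup_{2k+2}=0$, hence $\etaup=0$; thus everything reduces to proving $G\equiv0$.

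To prove $G\equiv0$ I would complexify and decompose by type with respect to the partial complex structure $\Jd$. By \eqref{eq4.5a} the image of $\qt$ in $\G_{-1}^{\C}$ is $\omegaup$-isotropic, i.e. $\omegaup$ vanishes on any pair of `holomorphic' vectors (and, conjugately, on any pair of antiholomorphic ones). A type-$(p,q)$ component of the symmetric form $G$, with $p+q=2k+3$, is obtained by feeding $p$ holomorphic and $q$ antiholomorphic arguments; since $2k+3$ is odd, either $p\ge k+2$ or $q\ge k+2$. In the first case I place one holomorphic vector in the $\omegaup$-slot and route the bracket $[E,V_1,\dots,V_{2k+2}]$ so that it ends up inside $\qt$: the first $k+1$ brackets stay in some $\Go_j^{\C}$ with $j\ge k$, hence in $\qt\cap\bar{\qt}$, by the depth bound, after which the remaining $k+1$ brackets are taken against holomorphic vectors (at least $p-1\ge k+1$ are available), so that, $\qt$ being a subalgebra, the result remains in $\qt$. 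Its projection to $\G_{-1}$ is then holomorphic, and $\omegaup$ pairs it to zero against the holomorphic vector reserved for the last slot. The conjugate argument settles $q\ge k+2$, so every type component of $G$ vanishes and $G\equiv0$.

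The main obstacle is precisely this combinatorial routing. One must check, using only the full symmetry of $G$, the inclusion $\Go_j^{\C}\subseteq\qt\cap\bar{\qt}$ for $j\ge k$, and the subalgebra property of $\qt$, that the $2k+2$ bracketings split into a `deep' initial block of length $k+1$ (automatically inside $\qt\cap\bar{\qt}$) and a monochromatic final block of length $k+1$ that keeps the result in $\qt$. The parity of $2k+1$ is exactly what makes the bookkeeping close: it guarantees that in every type one kind of vector occurs at least $k+1$ times even after one is reserved for the $\omegaup$-pairing. Once this reorganisation is justified, the $\Jd$-isotropy of $\qt$ from \eqref{eq4.5a} together with the nondegeneracy of $\omegaup$ finishes the proof.
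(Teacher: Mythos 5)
Your argument is correct, and it rests on the same two pillars as the paper's own proof: the full symmetry of $\etaup_{2k+3}$ for $\etaup\in\G'_{2k+1}$ (Lemma~\ref{lem3.2}), and the key identity that a bracket starting from the image of $\Go_k^{\C}$ with $k+2$ holomorphic arguments projects to $0$ in $\G_{-2}^{\C}$ --- because $\Go_k^{\C}\subseteq\qt\cap\bar{\qt}$, $\qt$ is a subalgebra, and $\piup_{-2}(\qt)=\{0\}$; the endgame (nondegeneracy of $\omegaup$ plus transitivity) is also identical. Where you genuinely differ is the combinatorial middle. The paper specializes all arguments to the pair $\{\xiup,\Jd(\xiup)\}$ for a single $\xiup\in\G_{-1}$, extracts $(2k+4)$ real linear relations among the vectors $\vq_h$, proves the coefficient matrix $M_k$ invertible by showing that the polynomials $t^h(t-i)^{k+2}$, $0\leq h\leq k+1$, form a basis of $\C_{2k+3}[t]$, deduces $[\etaup,\xiup,\hdots,\xiup]=0$, and finally polarizes (citing Weyl) to recover $\etaup_{2k+3}=0$. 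You instead complexify, evaluate on pure-type tuples, and use the parity of $2k+3$ --- one type must occur at least $k+2$ times --- together with the $\omegaup$-isotropy of the holomorphic subspace from \eqref{eq4.5a} to annihilate each type component outright; since pure-type vectors span $\G_{-1}^{\C}$, this gives $\etaup_{2k+3}=0$ with no matrix inversion and no polarization step. Your route is arguably more transparent about why the threshold $2k+1$ suffices (the oddness of $2k+3$ does all the work), whereas in the paper this is hidden in the invertibility of $M_k$; the paper's version, on the other hand, stays entirely within real arguments built from one vector. Two points you should make explicit in a write-up: (i) every pure-type vector of $\G_{-1}^{\C}$ admits a lift lying in $\qt$ (resp.\ $\bar{\qt}$) itself --- this follows from $\Go_{-1}=(\qt+\bar{\qt})\cap\gt_0$ --- so the ``routing'' really can be carried out inside the filtered algebra; (ii) the rearrangement of the $2k+3$ arguments into your deep block, monochromatic block and reserved slot is licensed precisely by the symmetry statement of Lemma~\ref{lem3.2}, i.e.\ by the hypothesis $\etaup\in\G'$ and not merely $\etaup\in\G$.
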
 
\begin{proof} By the assumption, any
$Y$ of $\Go_k$ belongs to $\qt$ and therefore 
\begin{equation*}
 [Y,Z_1,\hdots,Z_{k+2}]\in\qt ,\;\; \forall Z_1,\hdots,Z_{k+2}\in\qt.
\end{equation*}
In the complexification of $\G,$ this yelds the equation 
\begin{equation*}
 [\thetaup,\xiup_1+\iq\Jd(\xiup_1),\hdots,
 \xiup_{k+2}+\iq\Jd(\xiup_{k+2})]=0,\;\; \forall\thetaup\in\G_k,\;\;
 \forall \xiup_1,\hdots,\xiup_{k+2}
\in\G_{-1},
\end{equation*}
as $\piup_{-2}(\qt)=\{0\},$ because $\qt$ is contained in the complexification of $\Go_{-1}.$ \par 
Let now $\etaup\in\G_{2k+1}.$ Fix $\xiup\in\G_{-1}$ and consider,
for $0\leq{h}\leq{2k+3},$ 
 the $(2k+4)$ vectors of $\G_{-2}:$ 
\begin{equation*} 
\vq_h=[\etaup,\xiup_1,\hdots,\xiup_{2k+3}],\; \text{with $\xiup_i=\Jd(\xiup)$\, for $i\leq{h}$ and
$\xiup_i=\xiup$\, for $h<i\leq{2k+3}.$}
\end{equation*}
For each choice of $\xiup_1,\hdots,\xiup_{k+1}$ in $\G_{-1},$ the multi-commutator
$[\etaup,\xiup_1,\hdots,\xiup_{k+1}]$ belongs to $\G_k$. Thus, setting, for each
integer $h$ with $0\leq{h}\leq{k+1},$
$\xiup_i^h=\Jd(\xiup)$ if $1\leq{i}\leq{h}$ and $\xiup_i^h=\xiup$ if $h<i\leq{k+1},$ the real 
and imaginary parts of 
\begin{equation*}
 [\etaup,\xiup^h_0,\hdots,\xiup^h_{k},Z_1,\hdots,Z_{k+2}]=0,\;\; \text{with\; $Z_1=\cdots=Z_{k+2}=\xiup+i\Jd(\xiup),$
}
\end{equation*}
yield $(2k+4)$ linear combinations of  $\vq_0,\hdots,\vq_{2k+3}$ 
which sum to zero.
These can be written in the form 
\begin{equation}\label{eq4.7}
 (\vq_0,\vq_1,\hdots,\vq_{2k+3}) \cdot M_k =0,
\end{equation}
where $M_k$ is a real $(2k+4)\times(2k+4)$ 
matrix $M_k$ 
whose columns are the coefficients of the real and imaginary 
parts of the polynomials $t^h(t-i)^{k+2},$ for
$0\leq{t}\leq({k+1}).$ \par
It is easy to check that these polynomials
form a basis of the $(2k+4)$-dimensional
$\C$-vector space $\C_{2k+3}[t]$ of polynomials of degree less
or equal to $(2k+3).$ \par \smallskip
{\small
In fact their $\C$-linear span contains
the $(2k+4)$  polynomials $(t-i)^{k+h+2}$ and
$(t+i)^{k+h+2},$ for $0\leq{h}\leq{k+1}.$ 
These are linearly independent and hence form a basis of $\C_{2k+3}[t].$
Indeed, let $a_h,b_h$ be complex coefficients for which
\begin{equation*}
 f(t)={\sum}_{h=0}^{k+1}(a_h(t-i)^{k+h+2}+b_h(t+i)^{k+h+2})=0.
\end{equation*}
 Let $r\leq{k+1}$ and assume that we already know that $a_h=0$ and $b_h=0$ if
$h>r,$ this being obviously the case when $r=(k+1).$
 Then 
\begin{equation*}
0= \frac{i}{2\cdot(2k+r)!}\frac{d^{2k+r-1}f(-i)}{dt^{2k+r-1}}=a_r
,\quad 0= \frac{-i}{2\cdot(2k+r)!}
\frac{d^{2k+r-1}f(i)}{dt^{2k+r-1}}= b_{r}
\end{equation*}
shows that also $a_r=0$ and $b_r=0.$ By recurrence, this proves that all coefficients $a_h,b_h$ must be zero
and thus the claimed linear independence of the polynomials $(t-i)^{k+h+2},$ 
$(t+i)^{k+h+2}.$}\par\smallskip
Hence $M_k$ is 
 nondegenerate
and \eqref{eq4.7} tells us that all vectors
$\vq_0,\hdots,\vq_{2k+3}$ are zero. 
In particular $\vq_0=0$ and therefore we proved that
 $$[\etaup,\underset{\text{$(2k+3)$ times}}{\underbrace{\xiup,\hdots\hdots,\xiup}}]=0,\;\;
\forall\xiup\in\G_{-1}.$$
Since, by Lemma~\ref{lem3.2}, the multilinear $\G_{-2}$-valued form 
 $\etaup_{2k+3}$ is symmetric in its arguments, it follows by polarization (see e.g. \cite[p.5]{HW39})
 that \par\centerline{$[\etaup,\xiup_1,\hdots,\xiup_{2k+3}]=0$ for all $\xiup_1,\hdots,\xiup_{2k+3}\in\G_{-1}.$}
 \par\noindent
 Since $\omegaup$ is nondegenerate, this yields \par\centerline{$[\etaup,\xiup_1,\hdots,\xiup_{2k+2}]=0$ 
 for all $\xiup_1,\hdots,\xiup_{2k+2}\in\G_{-1}$}\par\noindent and hence, by transitivity,  $\etaup=0.$ 
\end{proof}
Thus we obtain, using also \cite[Theorem 10.2]{MN05},
\begin{thm}\label{thm6.4a}
A $CR$ algebra $(\gt_0,\qt)$ for which the associated contact triple 
$(\gt_0,\breve{\qt}_0,\Tilde{\qt}_0)=(\gt_0,\qt\cap\bar{\qt}\cap\gt_0,(\qt+\bar{\qt})\cap\gt_0)$ 
is nondegenerate is finite dimensional. \qed
\end{thm}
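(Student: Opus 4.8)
The plan is to reduce the statement to Tanaka's criterion of \S\ref{secgrad}, applied to the associated graded Lie algebra $\G=\bigoplus_{h}\G_h$, $\G_h=\Go_h/\Go_{h+1}$, of the canonical filtration \eqref{eq2.2a} of the contact pair $(\gt_0,\Tilde{\qt}_0)$, the hard technical work being already contained in the preceding Lemma. First I would exploit nondegeneracy: since $(\gt_0,\breve{\qt}_0,\Tilde{\qt}_0)$ is a nondegenerate contact triple, the characterization of nondegeneracy in \S\ref{descat} provides a positive integer $k$ with $\Go_k\subseteq\isot_0=\breve{\qt}_0$. The preceding Lemma then gives $\G'_{2k+1}=\{0\}$.

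Next I would propagate this vanishing to all higher degrees. Since $[\G'_h,\G_{-1}]\subseteq\G'_{h-1}$ for every $h$ and $\G$ is transitive---if $\etaup\in\G_h$ with $h\geq 0$ and $[\etaup,\G_{-1}]=\{0\}$, then a representative $X\in\Go_h$ satisfies $[X,\Go_{-1}]\subseteq\Go_h$, whence $X\in\Go_{h+1}$ by \eqref{eq2.2f} and $\etaup=0$---an easy upward induction applies: assuming $\G'_m=\{0\}$ for some $m\geq 2k+1$, every $\etaup\in\G'_{m+1}$ obeys $[\etaup,\G_{-1}]\subseteq\G'_m=\{0\}$, so $\etaup=0$ by transitivity. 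Hence $\G'_h=\{0\}$ for all $h\geq 2k+1$, and $\G'=\bigoplus_{h\leq 2k}\G'_h$ has only finitely many nonzero homogeneous summands.

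I would then check that each of these summands is finite dimensional, so that $\G'$ is. The negative part of $\G$ is finite dimensional: $\G_{-1}=\Go_{-1}/\Go_0$ is a quotient of $\lgt_0/\isot_0\subseteq\gt_0/\isot_0$, which is finite dimensional because $(\gt_0,\isot_0)$ is transitive, and for $h\geq 2$ the space $\G_{-h}$ is a subquotient of the finite dimensional $\gt_0/\Go_{-1}$. By Lemma~\ref{lem3.2} the map $\etaup\mapsto\etaup_1$ embeds $\G_0$ into $\gl_{\R}(\G_{-1})$, so $\G_0$ is finite dimensional, and for $h>0$ the same Lemma embeds $\G_h$ into $\Mf^h(\G_{-1},\G_0)$; thus every $\G_h$ is finite dimensional and $\G$ has finitely many negative summands. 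Therefore $\G'$ is finite dimensional, and Tanaka's criterion yields that $\G$ itself is finite dimensional.

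It remains to transfer finiteness from $\G$ back to $\gt_0$, and this is the step I expect to be the real obstacle. Nondegeneracy forces the largest ideal $\ct_0=\bigcap_h\Go_h$ contained in $\Tilde{\qt}_0$ to lie in $\isot_0$; as $(\gt_0,\isot_0)$ is transitive, $\isot_0$ contains no nonzero ideal, so $\ct_0=\{0\}$. Combined with $\Go_{-\muup}=\gt_0$ this makes \eqref{eq2.2a} an exhaustive and separated filtration whose graded is finite dimensional, forcing $\Go_N=\{0\}$ for $N$ large and hence $\dim_{\R}\gt_0=\dim_{\R}\G<\infty$; this is the place where one also invokes \cite[Theorem 10.2]{MN05}. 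The subtle point is exactly this reassembly: the preceding Lemma only bounds the symbol and its prolongation degree by degree, so one must know that the homogeneous pieces faithfully recover $\gt_0$, which is guaranteed precisely by $\ct_0=\{0\}$ together with the finiteness of the depth $\muup$.
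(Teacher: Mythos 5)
Your proposal is correct and follows essentially the same route as the paper: the paper deduces Theorem~\ref{thm6.4a} in one line from the Lemma giving $\G'_{2k+1}=\{0\}$ when $\Go_k\subseteq\isot_0$, Tanaka's criterion of \S\ref{secgrad}, and the citation of \cite[Theorem 10.2]{MN05} for the reassembly. The details you supply---transitivity of $\G$, finite dimensionality of the graded pieces via Lemma~\ref{lem3.2}, and the separation $\ct_0=\{0\}$ forcing $\Go_N=\{0\}$ and $\dim_{\R}\gt_0=\dim_{\R}\G<\infty$---are exactly what the paper leaves implicit in that deduction.
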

We will prove in \S\ref{ext} that, under the assumptions of Theorem~\ref{thm6.4a},
$(\gt_0,\qt)$ has a \textit{maximal extension} and
that this is finite dimensional. To this aim we will generalise,
in \S\ref{sectrans},  the construction of \S\ref{hcs}, by
a procedure similar to that of \cite{Conn84, GS}.

\section{Transitive pairs and generalised contact distributions}  \label{sectrans}
\subsection{Vector fields with formal power series coefficients}\label{subs7.1}
Let $V$ be a finite dimensional vector space.
The space $\Fi$ of formal power series associated to $V$ is the 
infinite direct sum 
\begin{equation}\label{eq7.1}
 \Fi={\sum}_{h\geq{0}}\Symm^h(V)=\left.\left\{{\sum}_{h=0}^\infty\alphaup_h\right| \alphaup_h\in\Symm^h(V)\;\forall{h}\right\},
\end{equation}
where $\Symm^h(V)=\Symm^h(V,\R)$ are the real-valued homogeneous multilinear symmetric forms 
of degree $h$ (cf. \S~\ref{secgrad}).
The coefficient $\alphaup_0$ is the \textit{value at $0$} of
${\sum}_{h=0}^\infty\alphaup_h.$ With the standard operations,
$\Fi$ is a local ring, whose
maximal ideal~$\Fi_0$ consists  of formal power series vanishing at
$0.$  
\par 
Each vector $\vq$  of $V$ defines a derivation $D_\vq$ on $\Fi,$ 
whose action  on each  summand $\Symm^h(V)$ is described by 
\begin{equation}
 \left\{ 
\begin{aligned}
 & D_\vq\alphaup =0 \;\; \text{if $\alphaup\in\Symm^0(V)\simeq\R,$}\\
& D_\vq\alphaup = h\cdot \vq\rfloor\alphaup
\in\Symm^{h-1}(V), \;\; \text{if $\alphaup\in\Symm^h(V),$ for $h>0,$}\\
&\text{i.e. $(D_\vq\alphaup)(\vq_1,\hdots,\vq_{h-1})=h\cdot\alphaup(\vq_1,\hdots,\vq_{h-1},\vq),$ $\forall
 \vq_1,\hdots,\vq_{h-1}\in{V}.$}
\end{aligned}\right.
\end{equation} 
The set $\Der(\Fi)$ 
of  derivations of $\Fi$ is the left $\Fi$-module $\Fi\otimes_{\R}{V}$ 
generated by $V.$  Thus any derivation $X^*$ is a formal series 
\begin{equation}\label{eq7.3}
 X^*={\sum}_{h=0}^\infty X^*_h,\;\;\text{with}\;\; X^*_h\in
 \Symm^h(V,V)=\Symm^h(V)\otimes_{\R}V.
\end{equation}
Denote by $\Der_0(\Fi)$ the Lie subalgebra $\Fi_0\otimes{V}$
of derivations vanishing at $0.$ 
\subsection{The case of Lie groups}\label{sub7.2}
Let $\Gf_0$ be a real Lie group. We recall that the left and right invariant vector fields on $\Gf_0$
coincide with the infinitesimal generators $L_X$ and $R_X$ of the  one-parameter groups 
\begin{align*}
& \R\times\Gf_0\ni (t,x)\to{x}\cdot\exp(tX)\in\Gf_0,
\;\;\;
 \R\times\Gf_0\ni(t,x)\to\exp(tX)\cdot{x}\in\Gf_0,
\end{align*} 
of
diffeomorphisms of $\Gf_0,$ 
respectively. We have the commutation rules 
\begin{equation} 
\begin{cases}
 [L_X(\vq),L_Y(\vq)]=L_{[X,Y]}(\vq),\\
 [R_X(\vq),R_Y(\vq)]=-R_{[X,Y]}(\vq),\\
 [L_X(\vq),R_Y(\vq)]=0,
\end{cases}\quad \forall X,Y\in\gt_0.
\end{equation} 
\par 
The exponential map is a diffeomorphism of an open neighbourhood
of $0$ in 
its Lie algebra $\gt_0$ onto an open neighborhood of
the identity, 
defining a local chart. In order to determine the Taylor series expansions of
$L_X,$ $R_X$ in 
in these coordinates,
it is convenient to consider the identities 
\begin{align*}
 \exp(tX)\cdot\exp(\vq)=\exp(\vq+t\cdot{R}_X(\vq)+0(t^2)),\\
 \exp(\vq)\cdot\exp(tX)=\exp(\vq+t\cdot{L}_X(\vq)+0(t^2)),
\end{align*}
for $\vq,X\in\gt_0.$
Since the maps $X\to{L}_X$ and $X\to{R}_X$ are linear, we obtain
{\small
\begin{align*}
 \exp(tX)\cdot\exp(\vq)=\left(\exp(-\vq)\cdot\exp(-tX)\right)^{-1}=\left(\exp(-\vq-t\cdot{L}_X(-\vq)+0(t^2)\right)^{-1}\\
 =\exp(\vq+t\cdot{L}_X(-\vq)+0(t^2))
\end{align*}}
showing that
\begin{equation*}
 R_X(\vq)=L_X(-\vq).
\end{equation*}
We can obtain 
the Taylor series expansions of 
$L_X$ and $R_X$ in the $\gt_0$-coordinates   
from the Baker-Campbell-Hausdorff formula  (see e.g. \cite{Dyn47}):
\begin{equation}\label{eq7.4a}
\left\{
\begin{aligned}
 L_X(\vq)&={\sum}_{h=0}^\infty b_h (\ad(\vq))^h(X),\\
 R_X(\vq)&={\sum}_{h=0}^\infty (-1)^h b_h(\ad(\vq))^h(X),
\end{aligned}\right.
\end{equation}
where the coefficients $b_h$ are 
defined by 
\begin{equation*}
{\sum}_{h=0}^\infty b_h{t}^h= \frac{t}{1-\exp(-t)}
=1+\frac{t}{2}+\frac{t^2}{12}+\frac{t^3}{48}+\cdots 
\end{equation*}
\par
\subsection{Homogeneous spaces}\label{sub8.3}
Let $M=\Gf_0/\Hf_0$ be a homogeneous space, with base point 
$\pnt_0=[\Hf_0].$ 
Fix a
 linear complement $V$ of $\isot_0$ in $\gt_0.$ 
The map 
\begin{equation}
 V\times\Hf_0\ni(\vq,x)\longrightarrow \exp(\vq)\cdot{x}\in\Gf_0
\end{equation}
restricts to a diffeomorphism of the product of a neighborhood of $\{0\}\times\Hf_0$  onto an
open neighbourhood of the identity in $\Gf_0.$ We use the projection on $M$ of $\exp(\vq)$ to define
coordinates near $\pnt_0.$ 
In fact, if $\varpi:\Gf_0\to{M}$ is the natural projection, with $\varpi(x)=x\cdot\pnt_0,$ 
then $\vq\to\varpi(\exp(\vq))$ is a diffeomorphism of an open neighbourhood of $0$ in $V$ onto an
open neighbourhood of $\pnt_0$ in $M.$ 
Let $\Fi$ be as in \S\ref{subs7.1}.
\par
In analogy with the definitions of 
$L_X$ and $R_X$ in \S\ref{sub7.2}, 
we may introduce a couple of vector fields $L^*_X$ 
and  $R_X^*$ on $M, $
as infinitesimal generators of the one-parameter groups of diffeomorphisms of $M,$ locally defined by
{\small
\begin{equation}\label{eq7.7b}
\begin{cases}
\exp(\vq)\cdot\exp(tX)=\exp(\vq+t\cdot{L}^*_X(\vq)+0(t^2))\cdot
\exp(t\cdot{H}(\vq)+0(t^2)),\\
\exp(tX)\cdot\exp(\vq)=\exp(\vq+t\cdot{R}_X^*(\vq)+0(t^2))\cdot
\exp(t\cdot{H}'(\vq)+0(t^2)),\\
\text{with}\;\; L_X^*,R_X^*\in\Fi\otimes{V},\;\; H,H'\in\Fi\otimes\isot_0.
\end{cases}
\end{equation}}
Let us find their formal power series expansions. 
Set 
\begin{equation}\label{eq7.8c}\left\{
\begin{aligned}
R_X^*(\vq)={\sum}_{h=0}^\infty\xq_h(\vq),\quad
H'(\vq)={\sum}_{h=0}^\infty\hq'_h(\vq),\;\;\;\qquad\qquad \\
\text{with\;\;
$\xq_h\in\Symm^h(V,V),$ \; $\hq'_h\in\Symm^h(V,\isot_0).$}
\end{aligned}\right. \end{equation}
Using \eqref{eq7.4a}, we obtain 
\begin{align*}
{\sum}_{h=0}^{\infty}(-1)^h{b}_h(\ad(\vq))^h(X)={\sum}_{h=0}^\infty
\xq_h(\vq)+{\sum}_{h=0}^\infty{\sum}_{r+s=h}b_r(\ad(\vq))^r(\hq'_s
(\vq)),
\end{align*}
yielding by recurrence 
\begin{equation}\label{eq7.8a}
\begin{cases}
X = \xq_0+\hq'_0,\\ \begin{aligned}
\xq_{h+1}(\vq)+\hq'_{h+1}(\vq)=(-1)^{h+1}b_{h+1}(\ad(\vq))^{h+1}(X)
\qquad\quad  \\
-{\sum}_{r=0}^{h}{b}_{r+1}(\ad(\vq))^{r+1}(\hq'_{h-r}(\vq)),\;\;
\end{aligned}\quad (h\geq{0}).
\end{cases}
\end{equation}
Let $\piup:\gt_0\to{V}$ be the projection along $\isot_0.$
Equations \eqref{eq7.8a} can be used to obtain explicit formulae
for $\xq_h$ and $\hq'_h$:
\begin{equation}\label{eq7.9b}
\begin{cases}
\xq_0=\piup(X),\\
\hq_0'=X-\piup(X),\\
\begin{aligned}
\xq_{h+1}(\vq)=(-1)^{h+1}b_{h+1}\piup((\ad(\vq))^{h+1}(X))\qquad\qquad\qquad
\\ -{\sum}_{r=0}^h
b_{r+1}\piup((\ad(\vq))^{r+1}(\hq'_{h-r}(\vq))),
\end{aligned}
\\[22pt]
\begin{aligned}
\hq'_{h+1}(\vq)=(-1)^{h+1}b_{h+1}(\ad(\vq))^{h+1}(X)\qquad\qquad\qquad
\qquad\quad
\\
-{\sum}_{r=0}^{h}{b}_{r+1}(\ad(\vq))^{r+1}(\hq'_{h-r}(\vq))
-\xq_{h+1}(\vq).\end{aligned} 
\end{cases}
\end{equation}
Since the projection $\Gf_0\ni{x}\to{x}\cdot\pnt_0\in{M}$ relates 
$R_X^*$ with the right invariant vector field on $\Gf_0$ corresponding to $X,$ 
we obtain:
\begin{lem} \label{lem7.1}
We have
\begin{equation}
\vspace{-21pt}
[X^*(\vq),Y^*(\vq)]=-[X,Y]^*(\vq),\;\;\forall X,Y\in\gt_0.
\end{equation}\qed 
\end{lem}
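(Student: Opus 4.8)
The plan is to deduce this from the commutation rules for right-invariant vector fields on $\Gf_0$, transported to $M$ through the projection $\varpi\colon\Gf_0\to M$, $x\mapsto x\cdot\pnt_0$. As signalled in the text preceding the statement, the decisive point is that $\varpi$ relates the right-invariant field $R_X$ on $\Gf_0$ to the induced field $X^*=R_X^*$ on $M$.

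First I would make this relatedness precise. The projection intertwines the left translations on $\Gf_0$ and on $M$, namely $\varpi(\exp(tX)\cdot x)=\exp(tX)\cdot\varpi(x)$; differentiating at $t=0$ gives $d\varpi_x\big((R_X)_x\big)=(X^*)_{\varpi(x)}$ for every $x$, so $R_X$ and $X^*$ are $\varpi$-related. Equivalently, the series \eqref{eq7.9b} are the Taylor coefficients of this push-forward in the $\vq$-coordinates, so the desired identity is just the Taylor expansion of its honest counterpart on $M$.

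Next I would invoke the naturality of the Lie bracket under related vector fields: if $\tilde A,\tilde B$ are $\varpi$-related to $A,B$, then $[\tilde A,\tilde B]$ is $\varpi$-related to $[A,B]$. Thus $[R_X,R_Y]$ is $\varpi$-related to $[X^*,Y^*]$. On the other hand, the commutation rules of \S\ref{sub7.2} give $[R_X,R_Y]=-R_{[X,Y]}$, and since $R_{[X,Y]}$ is $\varpi$-related to $[X,Y]^*$, the same field $[R_X,R_Y]$ is also $\varpi$-related to $-[X,Y]^*$. Because $\varpi$ is a surjective submersion, a vector field on $M$ is uniquely determined by the projectable field it is $\varpi$-related to; comparing the two descriptions of the image of $[R_X,R_Y]$ forces $[X^*,Y^*]=-[X,Y]^*$.

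I expect no genuine obstacle. The one point deserving a line of care is the uniqueness step: one should note that $[R_X,R_Y]$ is $\varpi$-projectable, which is automatic once it is identified with $-R_{[X,Y]}$, so that its two $\varpi$-images can legitimately be compared. A purely formal alternative, expanding both sides directly from \eqref{eq7.9b} and \eqref{eq7.4a}, would yield the same identity, but the $\varpi$-relatedness argument makes that bookkeeping superfluous.
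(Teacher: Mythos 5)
Your proof is correct and takes essentially the same approach as the paper: the paper's own (one-sentence) argument is precisely that the projection $\Gf_0\ni x\mapsto x\cdot\pnt_0\in M$ relates the right-invariant field $R_X$ to $R_X^*$, so the commutation rule $[R_X,R_Y]=-R_{[X,Y]}$ of \S\ref{sub7.2} descends to $M$. You have merely made explicit the $\varpi$-relatedness, bracket-naturality, and uniqueness steps that the paper leaves implicit.
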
 
Analogously, let us set 
\begin{equation}\label{eq7.11c}\left\{
\begin{aligned}
L_X^*(\vq)={\sum}_{h=0}^\infty\yq_h(\vq),\quad
H(\vq)={\sum}_{h=0}^\infty\hq_h(\vq),\;\;\;\qquad\qquad \\
\text{with\;\;
$\yq_h\in\Symm^h(V,V),$ \; $\hq_h\in\Symm^h(V,\isot_0).$}
\end{aligned}\right.\end{equation}
From the equation 
\begin{align*}
{\sum}_{h=0}^{\infty}{b}_h(\ad(\vq))^h(X)={\sum}_{h=0}^\infty
\yq_h(\vq)+{\sum}_{h=0}^\infty{\sum}_{r+s=h}b_r(\ad(\vq))^r(\hq_s
(\vq)),
\end{align*}
we obtain 
{
\begin{equation}\label{eq7.11a}
\begin{cases}
X = \yq_0+\hq_0,\\ \begin{aligned}
\yq_{h+1}(\vq)+\hq_{h+1}(\vq)=b_{h+1}(\ad(\vq))^{h+1}(X)
\qquad
\qquad\quad \quad\\
-{\sum}_{r=0}^{h}{b}_{r+1}(\ad(\vq))^{r+1}(\hq_{h-r}(\vq)),\;\;\\
\end{aligned}(h\geq{0}).
\end{cases}
\end{equation}}
Equations \eqref{eq7.11a} can be used to obtain recursive formulae
for $\yq_h$ and $\hq_h$:
\begin{equation}\label{eq7.12b}
\begin{cases}
\yq_0=\piup(X),\\
\hq_0=X-\piup(X),\\
\begin{aligned}
\yq_{h+1}(\vq)=b_{h+1}\piup((\ad(\vq))^{h+1}(X))\qquad\qquad\qquad
\\ -{\sum}_{r=0}^h
b_{r+1}\piup((\ad(\vq))^{r+1}(\hq_{h-r}(\vq))),
\end{aligned}
\\[22pt]
\begin{aligned}
\hq_{h+1}(\vq)=b_{h+1}(\ad(\vq))^{h+1}(X)\qquad\qquad\qquad
\qquad\quad
\\
-{\sum}_{r=0}^{h}{b}_{r+1}(\ad(\vq))^{r+1}(\hq_{h-r}(\vq))
-\yq_{h+1}(\vq),\end{aligned} 
\end{cases}
\end{equation}
Note that $L_Y^*=0$ when $Y\in\isot_0.$\par  
 To compute the commutator of $R_X^*$ and $L^*_Y$ for a pair $X,Y\in\gt_0$,
we use the infinitesimal description of their flows,
which can be obtained from \eqref{eq7.7b}.
Set $\Phi_X$ for the flow of $R_X^*$ and $\Psi_Y$
for the flow of $L_Y^*.$ Then  
\begin{align*}
 \exp(\vq+t^2[R_X^*,L_Y^*](\vq)+0(t^3))=\exp(\Phi_X(-t)\circ\Psi_Y(-t)\circ\Phi_X(t)\circ\Psi_Y(t))(\vq)).
\end{align*}
By lifting the action to $\Gf_0,$ we have 
\begin{equation*}\begin{cases}
 \exp(\Phi_X(t)(\vq))=\exp(tX)\exp(\vq)\exp(-tH'(\vq)+0(t^2)),\\
  \exp(\Psi_Y(t)(\vq))=\exp(\vq)\exp(tY)\exp(-tH(\vq)+0(t^2)).
  \end{cases}
\end{equation*}
Then we obtain the composition 
{\small
\begin{align*}
 \exp(-tX)
 \exp(tX)
 \exp(\vq)\exp(tY)\exp(-tH(\vq)+0(t^2))
 \exp(-tH'(\vq)+0(t^2)) \\
 \cdot 
 \exp(-tY)\exp(tH(\vq))
 \exp(tH'(\vq))\\
 =  \exp(\vq)\exp(tY)\exp(-tH(\vq))
 \exp(-tH'(\vq)) 
 \exp(-tY)\exp(tH(\vq))
 \exp(tH'(\vq))+0(t^3)\\
 = \exp(\vq)\exp(-t^2[Y,(H(\vq)+H'(\vq))])\exp(t^2[H(\vq),H'(\vq)])+0(t^3).
\end{align*}}
This yields
\begin{lem} \label{lem7.2}
For $X,Y\in\gt_0,$ we have 
\begin{equation}
 [R_X^*,L_Y^*](\vq)=L^*_{[H+H',Y]}(\vq),
\end{equation}
 where $H(\vq)$ and $H'(\vq)$ are described by \eqref{eq7.12b} 
 and \eqref{eq7.9b}, and the right hand side is a composition of formal power series.
 \qed 
\end{lem}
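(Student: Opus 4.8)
The plan is to read off the bracket $[R_X^*,L_Y^*](\vq)$ from the second-order term of the iterated commutator of the two flows, after lifting the entire computation to $\Gf_0$ and exploiting the fact that all the vertical corrections take values in $\isot_0$. First I would record the standard flow-commutator identity already set up above: writing $\Phi_X$ and $\Psi_Y$ for the flows of $R_X^*$ and $L_Y^*$, we have
\[
\exp\bigl(\vq + t^2[R_X^*,L_Y^*](\vq) + O(t^3)\bigr)=\exp\bigl(\Phi_X(-t)\circ\Psi_Y(-t)\circ\Phi_X(t)\circ\Psi_Y(t)(\vq)\bigr),
\]
so that the bracket is precisely the coefficient of $t^2$ in the lifted composition.

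Next I would lift each of the four flows to $\Gf_0$ by means of the defining relations \eqref{eq7.7b}, i.e. $\exp(\Phi_X(t)(\vq))=\exp(tX)\exp(\vq)\exp(-tH'(\vq)+O(t^2))$ and $\exp(\Psi_Y(t)(\vq))=\exp(\vq)\exp(tY)\exp(-tH(\vq)+O(t^2))$, and multiply out the resulting group elements. The leftmost left-translation factors $\exp(-tX)\exp(tX)$ cancel, and modulo $O(t^3)$ what survives is the word
\[
\exp(\vq)\exp(tY)\exp(-tH)\exp(-tH')\exp(-tY)\exp(tH)\exp(tH')
\]
in the $\Gf_0$-variables, where I abbreviate $H=H(\vq)$ and $H'=H'(\vq)$. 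Applying the Baker--Campbell--Hausdorff formula, the first-order term vanishes and collecting the surviving second-order commutators gives
\[
\exp(\vq)\exp(-t^2[Y,H+H'])\exp(t^2[H,H']) + O(t^3).
\]

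The decisive structural point is then that the two rightmost factors are vertical. Since $H$ and $H'$ take values in $\isot_0$, the factor $\exp(t^2[H,H'])$ lies in $\Hf_0$; and expanding $\exp(\vq)\exp(t^2[H+H',Y])$ via the defining relation for $L^*$ with parameter $s=t^2$ (using $-[Y,H+H']=[H+H',Y]$) produces $\exp(\vq + t^2 L^*_{[H+H',Y]}(\vq)+O(t^4))$ times a further $\isot_0$-valued vertical factor. Projecting the whole product down to $M=\Gf_0/\Hf_0$ annihilates every vertical factor, leaving $\exp(\vq + t^2 L^*_{[H+H',Y]}(\vq)+O(t^3))$. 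Comparing with the flow-commutator identity yields $[R_X^*,L_Y^*](\vq)=L^*_{[H+H',Y]}(\vq)$, as claimed.

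Finally I would check that the right-hand side is a legitimate formal power series: $L^*_Z$ is linear in $Z$, while $Z=[H(\vq)+H'(\vq),Y]$ is itself a power series in $\vq$, so $L^*_{[H+H',Y]}(\vq)$ is the composition of two formal power series, well defined because for each total degree only finitely many homogeneous pieces contribute. The main obstacle I anticipate is precisely the careful bookkeeping of the $\isot_0$-valued vertical corrections: one must track them through the second-order BCH expansion, confirm that after projection to $M$ exactly the horizontal part $L^*_{[H+H',Y]}$ remains, and verify that substituting the series $[H+H',Y]$ into the $Z$-linear expression for $L^*_Z$ yields a genuine element of $\Fi\otimes V$.
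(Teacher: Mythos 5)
Your proof follows the paper's own argument essentially step for step: the same flow-commutator identity for $\Phi_X$ and $\Psi_Y$, the same lift to $\Gf_0$ via \eqref{eq7.7b}, the cancellation of $\exp(-tX)\exp(tX)$, and the same second-order collection of commutators yielding $\exp(\vq)\exp(-t^2[Y,H+H'])\exp(t^2[H,H'])+O(t^3)$. The only difference is that you make explicit the final step --- discarding the $\isot_0$-valued vertical factors under the projection to $M$ and reading off $L^*_{[H+H',Y]}$ as a composition of formal power series --- which the paper compresses into ``This yields''; that is a faithful completion, not a departure.
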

This lemma tells us that the \textit{infinitesimal
translations} of $L^*_{\wq},$
for a $\wq\in{V},$ can be expressed as formal power series 
whose coefficients are $L^*_Y$'s for $Y$ in the $\isot_0$-module
generated by $\wq.$ 
In a similar way we also obtain 
\begin{lem}
 For $Y_1,Y_2\in\gt_0,$ we have 
\begin{equation}
 [L^*_{Y_1},L^*_{Y_2}]=L^*_{[Y_1,Y_2]+[H_1(\vq)-H_2(\vq),Y_1-Y_2]},
\end{equation}
where $H_i\in\Fi\otimes\isot_0$ is defined by the first line of \eqref{eq7.7b}, with $X=Y_i$, for $i=1,2.$
\qed
\end{lem}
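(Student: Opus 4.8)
The plan is to repeat, for two vector fields of ``left type'', the flow computation carried out for Lemma~\ref{lem7.2}. Writing $\Psi_1,\Psi_2$ for the flows of $L^*_{Y_1},L^*_{Y_2}$, I would begin from the description of the bracket as the second order term of the commutator of flows, in the normalisation used in the proof of Lemma~\ref{lem7.2},
\[
\exp\!\bigl(\vq+t^2[L^*_{Y_1},L^*_{Y_2}](\vq)+O(t^3)\bigr)
=\exp\!\bigl(\Psi_1(-t)\circ\Psi_2(-t)\circ\Psi_1(t)\circ\Psi_2(t)(\vq)\bigr),
\]
and then lift each flow to $\Gf_0$ through the first line of \eqref{eq7.7b}, namely $\exp(\Psi_i(t)(\vq))=\exp(\vq)\exp(tY_i)\exp(-tH_i(\vq)+O(t^2))$, with $H_i$ the $\isot_0$-valued correction attached to $Y_i$ by \eqref{eq7.12b}.

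The heart of the proof is then the explicit multiplication of the four lifted factors in $\Gf_0$ and the extraction of the $t^2$-coefficient by the Baker--Campbell--Hausdorff formula \cite{Dyn47}. It is instructive to contrast this with Lemma~\ref{lem7.2}: there one factor was a left and one a right translation, and since left- and right-invariant vector fields on $\Gf_0$ commute, the leading group commutator was trivial and only the $\isot_0$-correction survived. Here both lifts are right translations by $\exp(tY_i)$, and their group commutator $\exp(tY_2)\exp(tY_1)\exp(-tY_2)\exp(-tY_1)=\exp(t^2[Y_2,Y_1]+O(t^3))$ is genuinely nontrivial; with the flow-commutator sign convention of Lemma~\ref{lem7.2} this accounts for the term $[Y_1,Y_2]$ of the statement. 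The second contribution comes from the correction factors $\exp(-tH_i)$: since $H_1\neq H_2$ they do not cancel, and commuting them past the $\exp(\pm tY_j)$ yields, modulo $O(t^3)$, exactly the cross term $[H_1-H_2,Y_1-Y_2]$.

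To conclude, I would project the resulting element of $\Gf_0$ back onto the slice $\exp(V)$; by a final application of \eqref{eq7.7b} (now with parameter $t^2$) the $V$-component equals $\vq+t^2 L^*_{Z}(\vq)+O(t^3)$ with $Z=[Y_1,Y_2]+[H_1-H_2,Y_1-Y_2]$, and comparison with the left-hand side of the displayed flow-commutator identity gives $[L^*_{Y_1},L^*_{Y_2}]=L^*_{Z}$. Here one uses that $Y\mapsto L^*_Y$ is linear and vanishes on $\isot_0$, so that only the class of $Z$ modulo $\isot_0$ is relevant, as is implicit in the statement.

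I expect the main obstacle to be the bookkeeping inside the group product: one must transport the $\isot_0$-valued series $H_1(\vq)$ and $H_2(\vq)$ through the whole composition and verify that, after projection to $M=\Gf_0/\Hf_0$, their only net second-order effect is $[H_1-H_2,Y_1-Y_2]$. As in Lemma~\ref{lem7.2}, one must also be careful that $\Psi_i(t)$ is the genuine flow while its lift in \eqref{eq7.7b} is recorded only to first order; keeping every exponent to the order required and discarding $O(t^3)$ consistently is what legitimises reading the $t^2$-term off the group computation.
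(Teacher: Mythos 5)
Your strategy is the one the paper itself intends (it offers nothing for this lemma beyond the words ``in a similar way'' after Lemma~\ref{lem7.2}), so as a plan it is reasonable; but the proposal stops exactly where the work lies, and the step you assert would fail if executed. The extraction of the $t^{2}$-coefficient is never performed: you only claim that the correction factors produce ``exactly the cross term $[H_1-H_2,Y_1-Y_2]$''. No computation can end that way, because the identity as printed cannot hold for a transitive pair with $\isot_0\neq\{0\}$. Take $Y_2=0$: by linearity $L^*_{Y_2}=0$, so the left-hand side vanishes identically, while the right-hand side is $L^*_{[H_1(\vq),Y_1]}$, whose value at $\vq=0$ is, by \eqref{eq7.12b}, $\piup\bigl([Y_1-\piup(Y_1),\piup(Y_1)]\bigr)$; its vanishing for every $Y_1\in\gt_0$ would mean $[\isot_0,\gt_0]\subseteq\isot_0$, i.e.\ that $\isot_0$ is an ideal of $\gt_0$, which is excluded for a transitive pair unless $\isot_0=\{0\}$. (Equivalently: the left-hand side is antisymmetric under $(Y_1,H_1)\leftrightarrow(Y_2,H_2)$, while $[H_1-H_2,Y_1-Y_2]$ is symmetric under that swap.) A concrete witness is $\gt_0=\langle A,B\rangle$ with $[A,B]=B$, $\isot_0=\R A$, $Y_1=A+B$, $Y_2=0$. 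Consistently with this, the eight-factor product your plan calls for,
\begin{equation*}
\exp(tY_2)\exp(-tH_2)\exp(tY_1)\exp(-tH_1)\exp(-tY_2)\exp(tH_2)\exp(-tY_1)\exp(tH_1),
\end{equation*}
has quadratic term
\begin{equation*}
t^2\bigl([Y_2,Y_1]-[Y_2,H_2]-[Y_2,H_1]-[H_2,Y_1]-[Y_1,H_1]+[H_2,H_1]\bigr),
\end{equation*}
and neither this expression nor its negative is congruent to $[Y_1,Y_2]+[H_1-H_2,Y_1-Y_2]$ modulo series with values in $\isot_0$ (the ones annihilated by $L^*$). Your sign bookkeeping is also incoherent: whatever convention is fixed for the commutator of flows applies to all terms simultaneously, so it cannot be invoked to flip $[Y_2,Y_1]$ into $[Y_1,Y_2]$ while the $H$-terms are kept as the product yields them; with the standard bracket (the one for which Lemma~\ref{lem7.1} holds) the composition $\Psi_1(-t)\circ\Psi_2(-t)\circ\Psi_1(t)\circ\Psi_2(t)$ expands as $\mathrm{id}+t^2[L^*_{Y_2},L^*_{Y_1}]+O(t^3)$, with the bracket in the opposite order to the one the paper writes.

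The other genuine gap is the point you defer to ``bookkeeping''. The lifts coming from \eqref{eq7.7b} determine the flows only to first order: the honest statement is $\exp(\Psi_i(t)(\vq))=\exp(\vq)\exp(tY_i)\exp(-tH_i(\vq)+t^2K_i(\vq)+O(t^3))$ with unknown coefficients $K_i$ which are \emph{not} $\isot_0$-valued, and which therefore enter the four-fold composition at order exactly $t^2$; discarding them is not legitimate. They can be controlled: writing the analogous expansion for $\Psi_i(-t)$ with coefficient $K_i'$ and using $\Psi_i(-t)\circ\Psi_i(t)=\mathrm{id}$, one finds $K_i+K_i'\equiv[Y_i,H_i(\vq)]$ modulo $\isot_0$-valued series. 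Thus the unknown parts cancel in pairs, but they leave behind the net contribution $[Y_1,H_1]+[Y_2,H_2]$, invisible to the naive expansion and exactly of the size of the terms in dispute. Carrying the composition through with these corrections, and with the correct sign of the flow commutator, one obtains
\begin{equation*}
[L^*_{Y_1},L^*_{Y_2}]=L^*_{[Y_1,Y_2]+[H_2(\vq),Y_1]-[H_1(\vq),Y_2]}=L^*_{[Y_1-H_1(\vq),\,Y_2-H_2(\vq)]},
\end{equation*}
the two subscripts differing by the $\isot_0$-valued series $[H_1,H_2]$. This right-hand side is antisymmetric, vanishes for $Y_2=0$, and at $\vq=0$ reproduces the value $\piup([\piup(Y_1),\piup(Y_2)])$ that a direct first-order computation from \eqref{eq7.12b} gives. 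In short, the lemma must be restated in this form, and your proposal would collapse at the two points above the moment the asserted computation was actually attempted. (The same two issues affect the printed proof of Lemma~\ref{lem7.2}, whose conclusion should read $[R^*_X,L^*_Y]=L^*_{[Y,H'(\vq)]}$; so modelling the argument on that proof without redoing its estimates is precisely what led you astray.)
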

\subsection{General transitive pairs} 
Let us fix a transitive pair.
We recall from Definition~\ref{def1.1} that
it is a couple $(\gt_0,\isot_0)$ consisting of a linearly compact topological Lie algebra
$\gt_0$ and of a finite codimensional closed
subalgebra $\isot_0$ that does not contain any nontrivial
ideal of $\gt_0.$  
\par 
Let us fix a finite dimensional complement $V$ of $\isot_0$ in
$\gt_0.$ 
We define $R_X^*(\vq),\, L_X^*(\vq),\, H(\vq),\, H'(\vq)$ by
\eqref{eq7.8c}, \eqref{eq7.8a}, \eqref{eq7.11c}, \eqref{eq7.11a},
after noticing that to
write these formulae it is not needed that
$\gt_0$ be finite dimensional,
because the homogeneous summands in the $V$-coordinates of 
their Taylor series 
only involve finite
powers of $\ad(\vq),$ finite 
linear
combinations, and the projection
$\piup:\gt_0\to{V}$ along $\isot_0.$
Set 
\begin{equation}
 \gt_0^*=\{R_X^*\mid X\in\gt_0\}.
\end{equation}
\begin{thm} \label{thm7.4}
If $(\gt_0,\isot_0)$ is a transitive pair and $V$
a linear complement of $\isot_0$ in $\gt_0$, then the map  
\begin{equation}\label{eq7.16}
\gt_0\ni{X}\longrightarrow{R}_X^*\in\Der(\Fi)
\end{equation}
is injective and defines an 
anti-isomorphism of Lie algebras
between $\gt_0$ and $\gt_0^*,$ with 
\begin{equation}
 \isot_0=\{X\in\gt_0\mid R_X^*\in\Der_0(\Fi)\}.
\end{equation}
The correspondence 
\begin{equation}\label{eq7.16b}
\lgt_0 \longrightarrow \lgt_0^*=\Fi\otimes\langle L^*_\wq\mid \wq\in{V}\cap
\lgt_0\rangle
\end{equation}
is a bijection between the set of $\isot_0$-submodules of $\gt_0$
containing $\isot_0$ 
and $\gt_0^*$-invariant left $\Fi$-modules of $\Der(\Fi).$ Its inverse is given by 
\begin{equation}
 \lgt_0^*\longrightarrow \lgt_0=\{Y\in\gt_0\mid \piup(Y)=\Theta(0)\;\;\text{for some}\;\; \Theta\in\lgt_0^*\}.
\end{equation}
\end{thm}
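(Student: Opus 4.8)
The theorem splits into properties of the single map $X\mapsto R_X^*$, which I would read off from Lemma~\ref{lem7.1}, and the module correspondence, whose well-definedness and invariance come from Lemma~\ref{lem7.2} and whose surjectivity is the only hard point. For the map, Lemma~\ref{lem7.1} gives $[R_X^*,R_Y^*]=-R_{[X,Y]}^*$, so $\gt_0^*$ is a Lie subalgebra of $\Der(\Fi)$ and $X\mapsto R_X^*$ is an anti-homomorphism onto it. Its kernel is an ideal of $\gt_0$ by this same identity, and by \eqref{eq7.9b} the leading term $R_X^*(0)=\piup(X)$ shows the kernel is contained in $\isot_0$; since $(\gt_0,\isot_0)$ is transitive, $\isot_0$ contains no nonzero ideal, so the kernel is trivial and the anti-homomorphism is an anti-isomorphism. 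Reading \eqref{eq7.9b} once more, $R_X^*\in\Der_0(\Fi)$ precisely when $\piup(X)=0$, i.e. when $X\in\isot_0$.

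For the correspondence I would first verify that the forward map is well defined, invariant, and inverted by the stated formula. Because $Y\mapsto L_Y^*$ is linear and vanishes on $\isot_0$, one has $L_Y^*=L_{\piup(Y)}^*$, so $\lgt_0^*$ is a left $\Fi$-submodule of $\Der(\Fi)$ whenever $\isot_0\subseteq\lgt_0$ and $[\isot_0,\lgt_0]\subseteq\lgt_0$. Its $\gt_0^*$-invariance follows from the derivation rule $[R_X^*,fL_\wq^*]=R_X^*(f)\,L_\wq^*+f\,[R_X^*,L_\wq^*]$ and Lemma~\ref{lem7.2}: the first summand is an $\Fi$-multiple of $L_\wq^*$, while $[R_X^*,L_\wq^*]=L^*_{[H+H',\wq]}$ with $H,H'\in\Fi\otimes\isot_0$, so $[H+H',\wq]$ has coefficients in $[\isot_0,\lgt_0]\subseteq\lgt_0$ and, using $L^*_Y=L^*_{\piup(Y)}$, lies in $\lgt_0^*$. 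Since $L_\wq^*(0)=\wq$, evaluation at $0$ gives $\{\Theta(0)\mid\Theta\in\lgt_0^*\}=V\cap\lgt_0$, which both exhibits the stated formula as the inverse and shows the forward map is injective.

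The remaining task, surjectivity, is where the work lies. Given a $\gt_0^*$-invariant left $\Fi$-module $\Mf$, I would put $W=\{\Theta(0)\mid\Theta\in\Mf\}\subseteq V$ and $\lgt_0=\{Y\mid\piup(Y)\in W\}$, and first check that $W$ is invariant under the isotropy action $\wq\mapsto\piup([A,\wq])$ of $\isot_0$: by \eqref{eq7.9b} the field $R_A^*$ ($A\in\isot_0$) vanishes at $0$ with linear part $\vq\mapsto\piup([A,\vq])$, so for $\Theta\in\Mf$ with $\Theta(0)=\wq$ the element $[R_A^*,\Theta]\in\Mf$ takes the value $\piup([A,\wq])$ at $0$ (up to sign), forcing it into $W$. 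Thus $\lgt_0$ is an $\isot_0$-submodule containing $\isot_0$ with the same value $W$ at the origin as $\Mf$. The key rigidity statement is that \emph{any $\Psi\in\Mf$ vanishing to order $\geq m$ at $0$ has leading term in $\Symm^m(V,W)$}: bracketing $\Psi$ successively with $R_{X_1}^*,\dots,R_{X_m}^*$, each bracket acts to leading order as the directional derivative along $\piup(X_i)$, so the $m$-fold bracket evaluated at $0$ is a nonzero multiple of $\Psi_m(\piup(X_1),\dots,\piup(X_m))$; this lies in $W$ because the iterated brackets remain in $\Mf$ and the $\piup(X_i)$ exhaust $V$.

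Finally I would deduce $\Mf=\lgt_0^*$ by matching jets to all orders and passing to the limit. For $\lgt_0^*\subseteq\Mf$ I would show $L_\wq^*\in\Mf$ ($\wq\in W$) by successive approximation: starting from $\Theta\in\Mf$ with $\Theta(0)=\wq$, the order-$(m+1)$ part of $L_\wq^*-\Theta$ lies in $\Symm^{m+1}(V,W)$ --- here repeated use of Lemma~\ref{lem7.2} shows the iterated brackets of $L_\wq^*$ move the label $\wq$ only inside the $\isot_0$-submodule generated by $\wq$, whose $\piup$-image stays in $W$ --- and this leading part is cancelled by an element of $\Fi\cdot\Mf$, producing a sequence in $\Mf$ converging to $L_\wq^*$. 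For the reverse inclusion one subtracts $L_{\Theta(0)}^*\in\lgt_0^*$ from $\Theta\in\Mf$, applies the rigidity statement to the order-$\geq 1$ remainder, cancels successive leading terms by elements of $\lgt_0^*$, and again passes to the limit; both limits are legitimate since $\Mf$ and $\lgt_0^*$ are finitely generated submodules of the free module $\Der(\Fi)$ over the Noetherian ring $\Fi=\R[[x_1,\dots,x_n]]$ and hence closed in the $\Fi_0$-adic topology. I expect this rigidity and jet-matching to be the main obstacle: one must control, order by order, the leading coefficients of elements of $\Mf$ and of the $L_\wq^*$ and show they are $W$-valued, which is exactly the rigidity that forces an invariant module to be reconstructed from its fiber $W$ at the origin.
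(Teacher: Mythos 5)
Your proposal is correct, and for the parts the paper actually writes out it follows the same path: the anti\--isomorphism comes from the formal validity of Lemma~\ref{lem7.1}, the identification of $\isot_0$ comes from the constant term $\xq_0=\piup(X)$ in \eqref{eq7.9b}, and well-definedness of both arrows in \eqref{eq7.16b} comes from Lemma~\ref{lem7.2} together with $L^*_Y=L^*_{\piup(Y)}$ and $L^*_{\wq}(0)=\wq.$ Two differences are worth recording. First, for injectivity the paper does not invoke Lemma~\ref{lem7.1}: it shows directly, by recurrence on \eqref{eq7.9b} plus a polarization in auxiliary variables $t_1,\hdots,t_h,$ that $R^*_X=0$ forces $\ad(Y_1)\circ\cdots\circ\ad(Y_h)(X)\in\isot_0$ for all $h$ and all $Y_i,$ so that the ideal generated by $X$ lies in $\isot_0$; your shortcut (the kernel is an ideal because $X\mapsto R^*_X$ is an anti-homomorphism, and it lies in $\isot_0$ because $R^*_X(0)=\piup(X)$) is shorter and equally valid, since the paper itself certifies that Lemma~\ref{lem7.1} remains true in the formal setting. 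Second --- and this is the substantive point --- the paper stops after checking that both assignments are well defined: it never verifies that the composite $\Mf\mapsto\lgt_0\mapsto\lgt_0^*$ returns the invariant module $\Mf$ one started from, which is exactly the surjectivity you single out as the hard step. Your rigidity statement (the order-$m$ homogeneous part of an element of an invariant module vanishing to order $m$ lies in $\Symm^m(V,W),$ proved by $m$-fold bracketing with the $R^*_{X_i},$ whose leading action is the directional derivative $D_{\piup(X_i)}$), the converse observation that every element of $\Symm^m(V,W)$ occurs as such a leading part by the left $\Fi$-module structure, and the passage to the limit using that submodules of the free module $\Der(\Fi)$ over the Noetherian local ring $\Fi$ are closed in the $\Fi_0$-adic topology, together give a complete proof of this missing step; so your argument does not merely reprove the theorem but fills a real gap in the paper's own proof.

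One small repair is needed in your jet-matching. As literally stated, your rigidity claim concerns $\Psi\in\Mf,$ whereas the remainders $L^*_{\wq}-\Theta_k$ (and later $\Theta-L^*_{\Theta(0)}-\cdots$) lie in $\Mf+\lgt_0^*,$ not in $\Mf.$ This is harmless but should be said: your proof of rigidity uses only that the ambient module is a $\gt_0^*$-invariant left $\Fi$-module whose values at $0$ lie in $W,$ and $\Mf+\lgt_0^*$ is such a module, because $\lgt_0^*$ is invariant by your forward-direction argument and its values at $0$ lie in $W=V\cap\lgt_0.$ Equivalently, your remark that iterated brackets keep $L^*_{\wq}$ inside $\Fi\otimes\langle L^*_{\vq}\mid \vq\in W\rangle$ --- because $W$ is stable under the isotropy action $\vq\mapsto\piup([A,\vq])$ --- already shows that values at $0$ of iterated brackets of the remainders stay in $W,$ which is all the induction requires. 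With that sentence made explicit, the proof is airtight.
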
 \begin{proof}
Let $X\in\gt_0$ and assume that $R_X^*=0.$ We use \eqref{eq7.9b}.
From $\xq_0=0$ se obtain that $X\in\isot_0.$ Since 
$b_1=\tfrac{1}{2}\neq{0},$ the condition that $\xq_1=0$ means that
$[\vq,X]\in\isot_0$ for all $\vq\in{V},$ 
and hence $[Y,X]\in\isot_0$ for all $Y\in\gt_0.$ In general, we find that 
$\hg_h(\vq),$ for each $h\geq{0},$ is 
a multiple of $(\ad(\vq)^h(X)$ and, 
arguing by recurrence we
obtain that 
$(\ad(Y)^h(X)\in\isot_0$ for all $h\geq{0}$ 
and $Y\in\gt_0.$ 
This yields that actually $\ad(Y_1)\circ\cdots \circ\ad(Y_h)(X)
\in\isot_0$
for all $h$ and $Y_1,\hdots,Y_h\in\gt_0.$ To show this fact, we argue again 
 by recurrence on $h,$ as the cases of $h=0,1$ are already
settled. For $h>1,$ we note that, for $t_1,\hdots,t_h\in\R,$  
\begin{align*}
t_1\cdots{t}_h\cdot 
\ad(Y_1)\circ\cdots\circ\ad(Y_h)(X)= 
\frac{1}{h!}[\ad(t_1{Y}_1+\cdots+t_h{Y}_h)]^h(X)
+{\sum}_i0(t_i^2)\\
\text{$+$ 
terms of the form $\ad(Y'_1)\circ\cdots\ad(Y'_r)(X)$
with $r<h.$}
\end{align*}
By the recursive assumption, the coefficient of the monomial $t_1\cdots{t}_h$ 
in the right hand side is an element of $\isot_0$
and hence
 $\ad(Y_1)\circ\cdots\circ\ad(Y_h)(X)\in\isot_0.$ 
We proved that the kernel of the map $X\to{R}_X^*$ is an ideal of $\gt_0$
 contained in $\isot_0.$ Therefore it is $\{0\}$ if $(\gt_0,\isot_0)$
 is a transitive pair. \par 
 The conclusion of Lemma~\ref{lem7.1} only depends on the formal
definition of $R_X^*$ in \eqref{eq7.8c} and \eqref{eq7.8a} 
and therefore is still valid, yielding \eqref{eq7.16}. \par 
Also the validity of 
Lemma~\ref{lem7.2} depends only on the formal definitions of
$R_X^*$ and $L_Y^*$ and therefore shows that, when
$\lgt_0$ contains $\isot_0$ and $[\isot_0,\lgt_0]\subseteq\lgt_0,$
then the left $\Fi$-module $\lgt_0^*$ 
generated by $\{L^*_{\wq}\mid \wq\in{V}
\cap\lgt_0\}$ satisfies $[\gt_0^*,\lgt_0^*]\subseteq
\lgt_0^*.$ Vice versa, if $\lgt_0^*$ is a left $\Fi$ submodule
of $\Der(\Fi)$ with $[\gt_0^*,\lgt_0^*]\subseteq\lgt_0^*,$
then the set $\lgt_0$ 
of $Y\in\gt_0$ such that $\piup(Y)$ is the value
in $0$ of a vector field in $\lgt_0^*$ is a subspace of $\gt_0$
containing $\isot_0$ and satisfying $[\isot_0,\lgt_0]\subseteq\isot_0.$
Indeed $[X,\wq]$ is the value at $0$ of $[R_X^*,L^*_{\wq}].$ 
This yields the correspondence \eqref{eq7.16b}, completing the
proof of the theorem.
\end{proof}
We already noted that the map $Y\to{L}^*_Y$ is linear. In particular, it can be extended by $\C$-linearity
to the case where $Y$ belongs to the complexification $\gt$ of $\gt_0$. Then the second part of the
statement of Theorem~\ref{thm7.4} extends to the case of \textit{complex vector distributions}.
We denote by $\isot\subseteq\gt$ the complexification of $\isot_0.$
\begin{thm}\label{thm7.5}
 The correspondence 
\begin{equation}\label{eq7.16e}
\lgt \longrightarrow \lgt^*=\Fi\otimes\langle L^*_\wq\mid \wq\in{V}\cap
\lgt\rangle
\end{equation}
is a bijection between the set of $\isot$-submodules of $\gt$
containing $\isot$ 
and $\gt_0^*$-invariant left $\C\otimes\Fi$-modules of $\C\otimes\Der(\Fi).$ 
Its inverse is given by 
\begin{equation}\vspace{-21pt}
 \lgt^*\longrightarrow \lgt=\{Y\in\gt\mid \piup(Y)=\Theta(0)\;\;\text{for some}\;\; \Theta\in\lgt^*\}.
\end{equation} \qed
\end{thm}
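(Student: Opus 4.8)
The plan is to deduce this statement directly from Theorem~\ref{thm7.4} by complexification, taking advantage of the fact that every formula defining the formal vector fields $R_X^*$, $L_Y^*$ and the correction terms $H,H'$ (namely \eqref{eq7.8c}--\eqref{eq7.12b}) is $\R$-linear in its Lie algebra argument, and that the commutation identities of Lemmas~\ref{lem7.1} and~\ref{lem7.2} are $\R$-bilinear. First I would record that, since $V$ is a complement of $\isot_0$ in $\gt_0$, its complexification $V_\C=\C\otimes{V}$ is a complement of $\isot=\C\otimes\isot_0$ in $\gt$, and that $\piup\colon\gt_0\to{V}$ extends $\C$-linearly to the projection of $\gt$ onto $V_\C$ along $\isot$. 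As already noted, $Y\mapsto{L}_Y^*$ is linear, hence extends by $\C$-linearity to $Y\in\gt$, so that $L_\wq^*$ is defined for every $\wq\in{V}_\C$ and the correspondence \eqref{eq7.16e} makes sense.

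The key steps then run parallel to the proof of Theorem~\ref{thm7.4}. Given an $\isot$-submodule $\lgt$ of $\gt$ with $\isot\subseteq\lgt$, I would first observe that, because $\isot=\C\otimes\isot_0$ is $\C$-spanned by $\isot_0$ and the bracket is $\C$-bilinear, the condition $[\isot,\lgt]\subseteq\lgt$ is equivalent to $[\isot_0,\lgt]\subseteq\lgt$. I would then apply the $\C$-linear extension of Lemma~\ref{lem7.2}: for $X\in\gt_0$ and $\wq\in{V}_\C\cap\lgt$ one has $[R_X^*,L_\wq^*]=L^*_{[H+H',\wq]}$, a formal series whose coefficients are $L_Y^*$ with $Y$ in the $\isot_0$-module generated by $\wq$, hence lying inside $\lgt$; this shows that $\lgt^*$ is $\gt_0^*$-invariant. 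Conversely, to a $\gt_0^*$-invariant left $\C\otimes\Fi$-module $\lgt^*$ I would attach $\lgt=\{Y\in\gt\mid\piup(Y)=\Theta(0)\text{ for some }\Theta\in\lgt^*\}$ and verify, exactly as in Theorem~\ref{thm7.4} but over $\C$, that $\lgt$ is a $\C$-subspace containing $\isot$ with $[\isot_0,\lgt]\subseteq\lgt$, whence $[\isot,\lgt]\subseteq\lgt$. That the two assignments are mutually inverse is then checked verbatim, since all the identities invoked in Theorem~\ref{thm7.4} persist under $\C$-linear extension.

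The step I expect to require the most care is the equivalence just used, namely that the $\gt_0^*$-invariance of $\lgt^*$---which by definition only tests brackets with the \emph{real} derivations $R_X^*$, $X\in\gt_0$---already encodes the full complex $\isot$-module condition on $\lgt$. This is where the reduction $[\isot,\lgt]\subseteq\lgt\Leftrightarrow[\isot_0,\lgt]\subseteq\lgt$ does the work: because $\isot$ is merely the $\C$-span of $\isot_0$, nothing is lost in passing from real to complex coefficients, and the bijection of Theorem~\ref{thm7.4} complexifies without the appearance of any genuinely new complex phenomena.
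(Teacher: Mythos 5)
Your proposal is correct and follows exactly the route the paper takes: the paper gives no separate proof of Theorem~\ref{thm7.5}, but justifies it in the sentences immediately preceding it by observing that $Y\mapsto L_Y^*$ is linear, hence extends by $\C$-linearity, so that the second part of Theorem~\ref{thm7.4} carries over to complex distributions. Your write-up merely makes explicit the details the paper leaves implicit (the complexified complement $V_\C$ and projection $\piup$, the $\C$-bilinear extension of Lemmas~\ref{lem7.1} and~\ref{lem7.2}, and the reduction $[\isot,\lgt]\subseteq\lgt\Leftrightarrow[\isot_0,\lgt]\subseteq\lgt$), which is a faithful elaboration of the same argument.
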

\section{Extensions} \label{ext}
\begin{defn}
 We say that a contact triple $(\gt_0',\isot_0',\lgt'_0)$ \emph{extends} the  contact triple
 $(\gt_0,\isot_0,\lgt_0)$ if there is an injective homomorphism of real Lie algebras $\phiup:\gt_0\to\gt_0'$
 such that $\phiup(\lgt_0)\subseteq\lgt'_0,$ $\phiup(\isot_0)\subseteq\isot_0'$ and the quotient maps 
 $(\gt_0/\lgt_0)\to(\gt_0'/\lgt_0')$ and $(\gt_0/\isot_0)\to(\gt_0'/\isot_0')$ induced by $\phiup$ are linear isomorphisms. \par
 We say that a $CR$ algebra $(\gt_0',\qt')$ \emph{extends} the $CR$ algebra $(\gt_0,\qt)$ 
 if there is an injective Lie algebras homomorphism $\phiup:\gt_0\to\gt_0',$ whose complexification we still
 denote by $\phiup,$ such that $\phiup(\qt)\subseteq\qt'$ and 
 the induced map on the quotients
$\gt_0/(\qt\cap\bar{\qt} \cap\gt_0)\to \gt_0'/(\qt'\cap\bar{\qt}' \cap\gt_0')$ and
$(\gt/\qt)\to(\gt'/\qt')$ are linear isomorphisms.
\end{defn}\par\smallskip
To deal with extensions, it is convenient to introduce a 
common Lie algebra in which we can embed both a given
Lie algebra and its extension. \par 
\begin{prop}
 Let $(\gt_0,\isot_0,\lgt_0)$ be a contact triple. Then there is a maximal contact triple $(\gt_0',\isot_0',\lgt_0')$
 extending $(\gt_0,\isot_0,\lgt_0),$ which is unique modulo isomorphisms. \par 
  Let $(\gt_0,\qt)$ be a $CR$ algebra. Then there is a maximal $CR$ algebra $(\gt_0',\qt')$
 extending $(\gt_0,\qt),$ which is unique modulo isomorphisms.
\end{prop}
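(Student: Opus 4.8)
The plan is to carry out the whole construction inside a single algebra of formal vector fields and to single out there the largest structure‑preserving subalgebra. First I would apply Theorem~\ref{thm7.4} to the transitive pair $(\gt_0,\isot_0)$: fixing a complement $V$ of $\isot_0$ in $\gt_0$, I obtain the injective anti‑homomorphism $X\mapsto R^*_X$ of $\gt_0$ onto $\gt_0^*\subseteq\Der(\Fi)$, the $\gt_0^*$‑invariant left $\Fi$‑module $\lgt_0^*$ attached to $\lgt_0$ by \eqref{eq7.16b}, and the base‑point subspace $L=\piup(\lgt_0)\subseteq V$. Since $\Der(\Fi)=\Fi\otimes V$ is linearly compact, its closed subalgebras are again linearly compact. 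I would then form the normalizer $\Nf(\lgt_0^*)=\{\Theta\in\Der(\Fi)\mid[\Theta,\lgt_0^*]\subseteq\lgt_0^*\}$, a closed subalgebra which contains $\gt_0^*$ by the $\gt_0^*$‑invariance of $\lgt_0^*$, and set $\widehat{\lgt}_0=\{\Theta\in\Nf(\lgt_0^*)\mid\Theta(0)\in L\}$. Because a contact triple demands that the distribution generate the algebra, I would take not $\Nf(\lgt_0^*)$ itself but the closed subalgebra $\gt_0'^*$ topologically generated by $\widehat{\lgt}_0$, and define $\gt_0'=(\gt_0'^*)^{\mathrm{opp}}$, $\isot_0'=\gt_0'^*\cap\Der_0(\Fi)$ and $\lgt_0'=\{\Theta\in\gt_0'^*\mid\Theta(0)\in L\}$. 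One checks at once that $\lgt_0'=\widehat{\lgt}_0$, and that $\gt_0^*\subseteq\gt_0'^*$, because $\lgt_0$ generates $\gt_0$ and every $R^*_Y$ with $Y\in\lgt_0$ lies in $\widehat{\lgt}_0$.

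Next I would verify that $(\gt_0',\isot_0',\lgt_0')$ is a contact triple. Evaluation at $0$ maps $\gt_0'^*$ onto $V$ (it already does so on $\gt_0^*\subseteq\gt_0'^*$), has kernel $\isot_0'$, and carries $\lgt_0'$ to $L$; hence $\gt_0'/\isot_0'\cong V$ and $\gt_0'/\lgt_0'\cong V/L$ are finite dimensional and $\isot_0'\subseteq\lgt_0'$. That $\lgt_0'$ generates $\gt_0'$ holds by construction, since $\gt_0'^*$ is the closed subalgebra generated by $\widehat{\lgt}_0=\lgt_0'$. The bracket condition $[\isot_0',\lgt_0']\subseteq\lgt_0'$ follows from Lemma~\ref{lem7.2} and the module property of $\lgt_0^*$, the value at $0$ of a commutator with a field vanishing at $0$ still lying in $L$. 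The one delicate point is that $\isot_0'$ contains no nonzero ideal of $\gt_0'$; I would prove this by the formal Taylor argument already used in the proof of Theorem~\ref{thm7.4}, namely that an ideal made of derivations vanishing at $0$, combined with transitivity of evaluation, is forced to vanish to every order and hence to be $\{0\}$. This makes $(\gt_0',\isot_0')$ a transitive pair, and $\phiup\colon X\mapsto R^*_X$ is then the required extension of $(\gt_0,\isot_0,\lgt_0)$.

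The heart of the matter is maximality. Given any contact triple $(\gt_1,\isot_1,\lgt_1)$ extending $(\gt_0,\isot_0,\lgt_0)$ through $\psiup$, I would choose the complement $V_1=\psiup(V)$ of $\isot_1$, which is legitimate since $\gt_0/\isot_0\to\gt_1/\isot_1$ is an isomorphism, and which identifies the formal space of $\gt_1$ with $\Fi$. The key point is that the recursive formulae \eqref{eq7.9b}--\eqref{eq7.12b} for $R^*$ and $L^*$ involve only iterated brackets and the projection along the isotropy, so for arguments in $\psiup(\gt_0)$ they reproduce the original ones verbatim; consequently the realization identifies $\psiup(\gt_0)^*$ with $\gt_0^*$ and, crucially, the formal distribution of $\gt_1$ with the very same module $\lgt_0^*$. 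Thus $\gt_1^*$ normalizes $\lgt_0^*$, whence $\gt_1^*\subseteq\Nf(\lgt_0^*)$; and since $\lgt_1$ generates $\gt_1$, the algebra $\gt_1^*$ is topologically generated by the $R^*_Y$ with $Y\in\lgt_1$, each of which lies in $\widehat{\lgt}_0$. Therefore $\gt_1^*\subseteq\gt_0'^*$, and this inclusion is an extension morphism $\gt_1\hookrightarrow\gt_0'$ compatible with $\psiup$. Hence $\gt_0'$ dominates every extension, so it is maximal; and uniqueness up to isomorphism follows because two maximal extensions embed into one another, while a different choice of $V$ alters $\Fi$, and thus the whole construction, only through the induced linear isomorphism.

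For a $CR$ algebra $(\gt_0,\qt)$ I would run the identical scheme using the complex Theorem~\ref{thm7.5}: take $\isot_0=\qt\cap\bar{\qt}\cap\gt_0$, extend $Y\mapsto L^*_Y$ by $\C$‑linearity, and let $\qt^*$ be the $\gt_0^*$‑invariant $\C\otimes\Fi$‑module attached to $\qt$, with $L=\piup(\qt)$. The candidate is $(\gt_0',\qt')$, where $\gt_0'^*$ is the closed real subalgebra generated by the real derivations whose $\C$‑linear extension normalizes $\qt^*$ and whose value at $0$ lies in $\piup(\Tilde{\qt}_0)$, and $\qt'=\{Z\in\C\otimes\gt_0'^*\mid Z(0)\in L\}$. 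Everything above transcribes, with one extra demand: that $\qt'$ be a complex \emph{subalgebra}, not merely a subspace. This is the main obstacle, amounting to the formal integrability of the realized partial complex structure. I expect to obtain it from the fact that $\qt$ is already a subalgebra, so that $\qt^*$ is closed under the vector‑field bracket, and the normalizer of an involutive module, cut down by the value‑at‑$0$ condition, inherits involutivity. Granting this, $(\gt_0',\qt')$ is a fundamental $CR$ algebra extending $(\gt_0,\qt)$, and the maximality and uniqueness arguments are verbatim those of the contact case.
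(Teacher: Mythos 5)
Your proposal takes essentially the same route as the paper's own (three-sentence) proof: realize $(\gt_0,\isot_0)$ as formal vector fields via Theorems~\ref{thm7.4} and~\ref{thm7.5} and define the maximal extension as (the opposite of) the stabilizer in $\Der(\Fi)$ of the module $\lgt_0^*$ (resp.\ of the module attached to $\qt$) --- and in fact your algebra topologically generated by $\widehat{\lgt}_0$ coincides with that full stabilizer, since any normalizing field splits as $R^*_X$ plus a normalizing field vanishing at $0$, the latter lying in $\widehat{\lgt}_0$ and the former in the algebra generated by $\{R^*_Y\mid Y\in\lgt_0\}$. The extra verifications you supply (the contact-triple axioms, the no-ideal condition, maximality via the choice $V_1=\psiup(V)$, and the involutivity of $\qt'$ in the $CR$ case) go beyond what the paper writes down, and your one ``granting this'' step does close along exactly the lines you indicate, using the bracket formula $[L^*_{Y_1},L^*_{Y_2}]=L^*_{[Y_1,Y_2]+[H_1-H_2,Y_1-Y_2]}$ to get involutivity of $\qt^*$ and then decomposing each $Z_i\in\qt'$ as an element of $\qt^*$ plus a field vanishing at $0$.
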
 
\begin{proof}
The statement follows from Theorems~\ref{thm7.4} and~\ref{thm7.5}.
If $\lgt_0^*$ is the $\Fi$-module corresponding to $\lgt_0$, we define ${\gt_0'}^*$ as the Lie algebra of
formal vector fields stabilising $\lgt^*_0$ and $\gt_0'$ equals to its opposite Lie algebra. \par
Likewise, in the case of a $CR$ algebra, we take ${\gt_0'}^*$ to be the stabiliser of ${\Tilde{\qt}}^*$ in
$\Der(\Fi)$ and define $\gt_0'$ to be its opposite Lie algebra.
\end{proof}
The finiteness result of \S\ref{finitecr} applies to give informations about
the global and local $CR$ automorphisms  
 on homogeneous and locally homogeneous $CR$ manifolds.
\par 
The analytic 
Lie subgroup of a Lie group $\Gf_0$ generated by a Lie subalgebra
$\isot_0$ of its Lie algebra $\gt_0$ may fail to be closed.
In this case the pair $(\gt_0,\isot_0)$ 
is associated 
to a \emph{locally $\Gf_0$-homogeneous} manifold, i.e.
a smooth open manifold $M_0$ having the property that the elements
of a small open neighborhood of the origin of $\Gf_0$ act 
as a transitive group of local diffeomorphisms on an open neighborhood
of a base point $\pnt_0$ of $M_0,$ and $\isot_0$ is the
Lie algebra of the stabilizer of $\pnt_0$ for this action
(see e.g. \cite{Most1950,Sp1993}). We can give in an obvious way
a notion of \emph{locally $\Gf_0$-homogeneous $CR$ manifold},
that we employ in the formulation of the following result.
\begin{thm}
Every contact nondegenerate $CR$ algebra $(\gt_0,\qt)$ admits an essentially unique maximal extension 
$(\gt_0',\qt'),$ which is finite dimensional and is therefore a $CR$ algebra of a locally homogeneous $CR$ manifold
whose $CR$ automorphisms form a Lie group of transformations.\qed
\end{thm}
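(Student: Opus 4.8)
The plan is to deduce the theorem from the existence of a maximal extension together with the finiteness criterion of Theorem~\ref{thm6.4a}. By the Proposition preceding this statement, a contact nondegenerate $CR$ algebra $(\gt_0,\qt)$ admits a maximal extension $(\gt_0',\qt')$, unique up to isomorphism, obtained by realizing $\gt_0$ as formal vector fields through Theorems~\ref{thm7.4} and~\ref{thm7.5}, taking ${\gt_0'}^*$ to be the stabilizer of $\Tilde{\qt}^*$ in $\Der(\Fi)$ and $\gt_0'$ its opposite Lie algebra. The crux is to verify that $(\gt_0',\qt')$ is again contact nondegenerate: granting this, Theorem~\ref{thm6.4a} applies to $(\gt_0',\qt')$ and yields at once that $\gt_0'$ is finite dimensional, after which the geometric conclusion follows from the transitive geometry of \S\ref{sectrans} and \S\ref{hcs}. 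Everything thus rests on two points, that contact nondegeneracy is inherited by the maximal extension, and that a finite dimensional $CR$ algebra is the $CR$ algebra of a genuine locally homogeneous $CR$ manifold.

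For the first point, recall from the Lemma of \S\ref{descat} that the contact triple $(\gt_0',\breve{\qt'}_0,\Tilde{\qt'}_0)$ underlying $(\gt_0',\qt')$ is nondegenerate exactly when its core $\ct_0'=\bigcap_h\Go_h'$, the largest ideal of $\gt_0'$ contained in $\Tilde{\qt'}_0$, lies inside $\breve{\qt'}_0$; and since $(\gt_0',\breve{\qt'}_0)$ is a transitive pair whose isotropy contains no nontrivial ideal of $\gt_0'$, this is equivalent to $\ct_0'=\{0\}$. The hypothesis of contact nondegeneracy is exactly $\ct_0=\{0\}$ for $(\gt_0,\qt)$. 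So I must show that passing to the maximal extension creates no nonzero ideal inside the contact distribution, i.e. that $\ct_0=\{0\}\Rightarrow\ct_0'=\{0\}$.

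To this end I would exploit Proposition~\ref{prop1.2f}: any ideal of $\gt_0'$ contained in $\Tilde{\qt'}_0$ is contained in every $\Go_h'$, hence projects to zero in each graded summand $\G_h=\Go_h/\Go_{h+1}$ and corresponds, in the realization of \S\ref{sectrans}, to a $\gt_0'$-invariant involutive subdistribution of the bracket generating distribution $\Tilde{\qt}^*$. Since the extension carries the nilpotent symbol $\bigoplus_{h<0}\G_h$, with its Levi form $\omegaup$ and partial complex structure $\Jd$, isomorphically from $(\gt_0,\qt)$ to $(\gt_0',\qt')$ — a consequence of the induced isomorphisms $\gt/\qt\cong\gt'/\qt'$ and $\gt_0/\breve{\qt}_0\cong\gt_0'/\breve{\qt'}_0$ — such a subdistribution should, in view of $\ct_0=\{0\}$ and the transitivity of the $\gt_0'$-action, be impossible. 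This implication $\ct_0=\{0\}\Rightarrow\ct_0'=\{0\}$ is the step I expect to be the main obstacle: the maximal extension enlarges the isotropy, so the iterated-bracket criterion of nondegeneracy cannot be transported verbatim from $\gt_0$ to $\gt_0'$, and the error terms produced by the new elements of $\breve{\qt'}_0$ have to be absorbed by comparing the canonical bracket filtration $\Go_h'$ with the order-of-vanishing filtration of $\Der(\Fi)$ and showing that $\bigcap_h\Go_h'=\{0\}$.

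Finally, granting finite dimensionality of $\gt_0'$, I would conclude as follows. A finite dimensional transitive pair of formal vector fields integrates, by classical transitive geometry (see \cite{GS,Conn84}), to a germ of homogeneous space: choosing a Lie group $\Gf_0'$ with Lie algebra $\gt_0'$ and the analytic subgroup $\Hf_0'$ with Lie algebra $\breve{\qt'}_0$, the quotient defines a locally homogeneous manifold $M_0'$ on which, exactly as in \S\ref{hcs}, the $\breve{\qt'}_0$-invariant subspaces $\Tilde{\qt'}_0$ and $\qt'$ push forward to a $CR$ structure whose $CR$ algebra at the base point is $(\gt_0',\qt')$. As $\gt_0'$ is by construction the full Lie algebra of infinitesimal $CR$ automorphisms and is finite dimensional, the $CR$ automorphisms of $M_0'$ form a finite dimensional Lie group of transformations, which is the assertion.
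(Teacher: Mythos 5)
Your overall architecture is exactly the derivation the paper intends (and, notably, never writes out: the theorem is stated with a \qed, as a direct consequence of what precedes). Existence and essential uniqueness of $(\gt_0',\qt')$ come from the Proposition of \S\ref{ext}, i.e.\ from Theorems~\ref{thm7.4} and~\ref{thm7.5} with ${\gt_0'}^*$ the stabiliser of the formal $CR$ distribution in $\Der(\Fi)$; finiteness is to come from Theorem~\ref{thm6.4a} applied to the extension; the geometric conclusion comes from \S\ref{hcs} and the discussion of locally $\Gf_0$-homogeneous manifolds. You have also correctly isolated the one substantive step this derivation needs and which the paper leaves tacit: that the maximal extension is again contact nondegenerate, equivalently (via Proposition~\ref{prop1.2f} and transitivity of $(\gt_0',\breve{\qt}_0')$) that $\ct_0'={\bigcap}_h\Go_h'=\{0\}.$

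Precisely there, however, your proposal has a genuine gap, which you flag yourself: the implication $\ct_0=\{0\}\Rightarrow\ct_0'=\{0\}$ is never proved, only described as ``the main obstacle'' together with a heuristic about invariant subdistributions, and the heuristic does not go through as stated. What one can extract formally from the definition of an extension is the identity $\phiup^{-1}(\Go_h')=\Go_h$ for all $h$ (a small induction using that the $\Go_h'$ are $\isot_0'$-modules), whence $\ct_0'\cap\phiup(\gt_0)=\phiup(\ct_0)=\{0\}$; but an ideal of $\gt_0'$ contained in $\Tilde{\qt}_0'$ need not meet $\phiup(\gt_0)$ at all, so the hypothesis on $(\gt_0,\qt)$, which only concerns ideals of the much smaller algebra $\gt_0$, gives no direct hold on $\ct_0'.$ Transitivity of $\gt_0'$ cannot rule such an ideal out either: a transitive algebra can perfectly well preserve a proper involutive subdistribution of its contact distribution --- that is exactly what a degenerate contact triple is --- so any correct argument must exploit the \emph{maximality} of $\gt_0'$ (that it is the full stabiliser in $\Der(\Fi)$), and plausibly the complex structure $\Jd,$ not merely transitivity plus $\ct_0=\{0\}.$ Nor can one transport the iterated-bracket criterion through the decomposition $X'=\phiup(X)+H,$ $H\in\isot_0'$: if $X\notin\Go_k$ one gets $\phiup([X,Y_1,\hdots,Y_{k+1}])\notin\Tilde{\qt}_0',$ but the correction terms $[H,\phiup(Y_1),\hdots,\phiup(Y_{k+1})]$ are only known to lie in filtration spaces of negative index and may cancel this, as you yourself observe. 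So your proposal is the right skeleton, but it reduces the theorem to an unproved and nontrivial lemma (the separatedness of the canonical filtration of the maximal extension) rather than proving it; to complete the argument you would have to establish that lemma, e.g.\ by combining $\phiup^{-1}(\Go_h')=\Go_h$ with the vector-field realisation of $\gt_0'$ and the mechanism of the Lemma of \S\ref{finitecr}.
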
 
\begin{thm} Let $\Gf_0$ be a Lie group and
$M_0$ a locally $\Gf_0$-homogeneous $CR$ manifold, 
with associated $CR$ algebra $(\gt_0,\qt).$ 
 If $(\gt_0,\qt)$ is fundamental and contact nondegenerate, then the
local  $CR$ automorphisms of $M_0$ generate a  
 finite dimensional Lie group. \qed
\end{thm}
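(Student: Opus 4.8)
The plan is to identify the Lie algebra of infinitesimal $CR$ automorphisms of the germ $(M_0,\pnt_0)$ with the maximal extension $(\gt_0',\qt')$ of $(\gt_0,\qt)$, and then to deduce everything from the finiteness established in \S\ref{finitecr} together with the existence of maximal extensions of \S\ref{ext}.

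First I would record the infinitesimal set-up. Since $M_0$ is locally $\Gf_0$-homogeneous, the vector fields induced by $\gt_0$ span $T_{\pnt_0}M_0$, so $(\gt_0,\isot_0)$ with $\isot_0=\breve{\qt}_0=\qt\cap\bar\qt\cap\gt_0$ is a transitive pair and the machinery of \S\ref{sectrans} applies at $\pnt_0$. Fixing a complement $V$ of $\isot_0$ in $\gt_0$ and the $\exp$-chart centred at $\pnt_0$, each $X\in\gt_0$ is faithfully represented by the formal field $R_X^*\in\Der(\Fi)$ (Theorem~\ref{thm7.4}), and by Theorem~\ref{thm7.5} the complex $CR$ distribution of $M_0$ is represented by the $\gt_0^*$-invariant $\C\otimes\Fi$-module $\qt^*$ attached to $\qt$ (which is a legitimate $\isot$-submodule containing $\isot$, since $[\isot_0,\qt]\subseteq\qt$).

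Next I would describe the automorphism algebra in these terms. A germ at $\pnt_0$ of an infinitesimal $CR$ automorphism of $M_0$ is a real-analytic vector field $\Theta$ whose formal Taylor expansion stabilises the $CR$ distribution, i.e. $[\Theta,\qt^*]\subseteq\qt^*$; its real character is the condition $\Theta=\bar\Theta$. By Theorem~\ref{thm7.5} and the construction of the maximal extension in \S\ref{ext}, the formal fields with this property are exactly the opposite of the maximal extension $\gt_0'$, the corresponding complex submodule being $\qt'$. Thus the Taylor map at $\pnt_0$ carries the Lie algebra of infinitesimal $CR$ automorphisms of $M_0$ into $\gt_0'$. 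Since $(\gt_0,\qt)$ is fundamental and contact nondegenerate, the associated contact triple $(\gt_0,\breve{\qt}_0,\Tilde{\qt}_0)$ is nondegenerate, whence by the maximal-extension theorem of \S\ref{ext} (which rests on Theorem~\ref{thm6.4a}) the algebra $\gt_0'$ is finite dimensional. Consequently the infinitesimal $CR$ automorphisms of $M_0$ form a finite dimensional Lie algebra, and, being analytic vector fields of transitive type on $(M_0,\pnt_0)$, they integrate to a finite dimensional local Lie group acting by local $CR$ transformations of $M_0$.

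The step I expect to be the main obstacle is the identification of the middle paragraph: that the Taylor map at $\pnt_0$ is a Lie algebra \emph{isomorphism} onto $\gt_0'$, not merely an embedding. Injectivity amounts to a unique-continuation statement — an infinitesimal $CR$ automorphism vanishing to infinite order at $\pnt_0$ must vanish near $\pnt_0$ — which relies on the real-analytic structure furnished by the local homogeneity of $M_0$. Surjectivity, i.e. that every formal automorphism in $\gt_0'$ is the Taylor series of a genuine local one, is where finite dimensionality is essential: it forces the stabilising formal fields to be jet-determined, hence of bounded order and convergent, so that they integrate to actual local flows. Granting this identification, the passage from the finite dimensional Lie algebra to the local Lie group of transformations is the classical integration of a transitive Lie algebra of analytic vector fields reviewed in \cite{GS, Conn84}.
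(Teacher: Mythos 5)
Your proposal is correct and is essentially the paper's own argument: the paper proves this theorem by precisely the chain you describe, namely the representation of the transitive pair $(\gt_0,\breve{\qt}_0)$ by formal vector fields (\S\ref{sectrans}, Theorems~\ref{thm7.4} and~\ref{thm7.5}), the maximal extension $(\gt_0',\qt')$ realized as the stabiliser of the distribution module (\S\ref{ext}), and finite dimensionality of $\gt_0'$ deduced from contact nondegeneracy via Theorem~\ref{thm6.4a}. The only caveat is that the surjectivity of the Taylor map, which you single out as the main obstacle, is not actually needed for the statement --- the injective embedding of the algebra of infinitesimal $CR$ automorphisms into the finite dimensional $\gt_0'$ already bounds its dimension and yields the Lie group conclusion --- which is fortunate, because your claim that finite dimensionality forces formal stabilising fields to be convergent would be hard to substantiate; the paper instead sidesteps convergence entirely by rebuilding a locally homogeneous $CR$ model directly from the finite dimensional algebra $(\gt_0',\qt')$.
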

\providecommand{\bysame}{\leavevmode\hbox to3em{\hrulefill}\thinspace}
\providecommand{\MR}{\relax\ifhmode\unskip\space\fi MR }
\providecommand{\MRhref}[2]{%
  \href{http://www.ams.org/mathscinet-getitem?mr=#1}{#2}
}
\providecommand{\href}[2]{#2}


\end{document}